\newtheoremstyle{dotless}{}{}{\itshape}{}{\bfseries}{}{}{}
\theoremstyle{dotless}
\theoremstyle{plain}
\newtheorem{thm}{Theorem}[section]
\newtheorem{prop}[thm]{Proposition}
\newtheorem{cor}[thm]{Corollary}
\theoremstyle{definition}
\newtheorem{defn}[thm]{Definition}
\newtheorem{rem}[thm]{Remark}
\newtheorem{exa}[thm]{Example}
\newcommand{\N} {\mathbb{N}}
\newcommand{\Z} {\mathbb{Z}}
\newcommand{\R} {\mathbb{R}}
\newcommand{\C} {\mathbb{C}}
\newcommand{\D} {\mathbb{D}}
\newcommand{\e}{\mathrm{e}}
\newcommand{\p}{\operatorname{p}}
\newcommand{\ob}{\operatorname{b}}
\newcommand{\apr}{\operatorname{a}}
\newcommand{\ap}{\operatorname{ap}}
\newcommand{\res}{\operatorname{r}}
\newcommand{\tp}{\operatorname{t}}
\newcommand{\alg}{\operatorname{alg}}
\newcommand{\bap}{\operatorname{bap}}
\newcommand{\seq}{\operatorname{seq}}
\newcommand{\s}{\operatorname{s}}
\newcommand{\ev}{\operatorname{ev}}
\newcommand{\cd}{\mathord{\,\cdot\,}} % argument dot for functions (with correct spacing)
\newcommand{\cL} {\mathcal{L}}
\newcommand{\ran}{\operatorname{ran}}
\DeclareMathOperator{\id}{id}
\DeclareMathOperator{\re}{Re}
\providecommand{\differential}{\mathrm{d}}
\renewcommand{\d}{\differential}
\newcommand\rlim{
\mathchoice{\vcenter{\hbox{${\scriptstyle{+}}$}}}
{\vcenter{\hbox{$\scriptstyle{+}$}}}
{\vcenter{\hbox{$\scriptscriptstyle{+}$}}}
{\vcenter{\hbox{$\scriptscriptstyle{+}$}}}}
\newcommand{\fakephantomsection}{%
  \Hy@GlobalStepCount\Hy@linkcounter%
  \Hy@MakeCurrentHref{\@currenvir.\the\Hy@linkcounter}
  \Hy@raisedlink{\hyper@anchorstart{\@currentHref}\hyper@anchorend}%
}
\begin{document}

\title[Spectral theory for semigroups on locally convex spaces]{Spectral theory for semigroups on locally convex spaces}
\author[K.~Kruse]{Karsten Kruse\,\orcidlink{0000-0003-1864-4915}}
\address{University of Twente, Department of Applied Mathematics, P.O.~Box 217, 7500 AE Enschede, The Netherlands}
\email{k.kruse@utwente.nl}

\subjclass[2020]{Primary 47A10, 47A25, 47D06 Secondary 46A70}

\keywords{spectral inclusion theorem, spectral mapping theorem, spectrum, strongly continuous semigroup, 
periodic semigroup}

\date{\today}
\begin{abstract}
In this paper we provide spectral inclusion and mapping theorems for strongly continuous locally 
equicontinuous semigroups on Hausdorff locally convex spaces. Our results extend the classical spectral inclusion 
and mapping theorems for strongly continuous semigroups on Banach spaces.
\end{abstract}
\maketitle

\section{Introduction}

The spectral theory for strongly continuous semigroups on Banach spaces is well developed, 
in particular spectral inclusion and mapping theorems are available, 
see e.g.~\cite[Chap.~IV]{engel_nagel2000}, \cite[Chap.~XVI]{hillephillips1957}, \cite[Chap.~2.2]{pazy1983} and 
\cite[Chap.~2]{vanneerven1996}. For a strongly continuous semigroup $(T(t))_{t\geq 0}$ with generator 
$(A,D(A))$ on a Banach space $X$ an identity like 
\begin{equation}\label{eq:full_spec_map}
\sigma(T(t))\setminus\{0\}=\e^{t\sigma(A)},\quad t\geq 0,
\end{equation}
is called a \emph{spectral mapping theorem} where $\sigma(\cd)$ in \eqref{eq:full_spec_map} denotes the spectrum of the 
corresponding operator. Looking at the abstract Cauchy problem
\[
u'(t)=Au(t),\quad t\geq 0,\quad u(0)=x_0\in X,
\]
which has the function $t\mapsto T(t)x_0$ as a (mild) solution, the spectral mapping theorem connects the spectral behaviour 
of the generator $A$ and of the solution of the abstract Cauchy problem induced by the semigroup $(T(t))_{t\geq 0}$. 
This is important since in concrete problems we often have a good characterisation of the generator but 
the semigroup is not explicitly available. 
For instance, this allows us to study the long-term, or asymptotic, behaviour of the 
non-explicit solution of the abstract Cauchy problem by studying the properties of the generator, 
see e.g.~\cite[Chap.~V]{engel_nagel2000}, in particular \cite[Chap.~V, 1.7 Proposition, 1.9 Lemma, p.~299--301]{engel_nagel2000}, 
and \prettyref{rem:spectral_bound} \ref{it:spectral_bound_2}. 

It is well-known that the spectral mapping theorem \eqref{eq:full_spec_map} does not hold in general, 
see \cite[p.~270--275]{engel_nagel2000}. However, it holds for eventually uniformly continuous semigroups. 
Further, if we replace the spectrum in \eqref{eq:full_spec_map} by the point or residual spectrum, then this adjusted 
spectral mapping theorem holds for all strongly continuous semigroups on Banach spaces.  

If we want to go beyond the realm of Banach spaces, a natural more general setting is to consider strongly continuous 
semigroups on Hausdorff locally convex spaces. The general theory of strongly continuous semigroups on such spaces 
is rather well developed, see 
e.g.~\cite{albanese2016b,babalola1974,choe1985,dembart1974,frerick2014,jacob2015,komatsu1964,
komura1968,kraaij2016,kruse_schwenninger2025,miyadera1959,moore1971a,moore1971b,ouchi1973,wegner2014,yosida1968}.
More recently, also spectral theory for closed linear operators and strongly continuous semigroups 
on Hausdorff locally convex spaces $X$ has gotten more attention, see for example 
\cite{albanese2010a,albanese2010b,albanese2012,albanese2013,albanese2014,albanese2016a,albanese2016c,wegner2016}, 
even though according to \cite[p.~254]{albanese2013}, for ``$X$ non-normable, the spectral theory of 
closed operators $A$ is much less developed.'' (cf.~\cite[p.~922]{albanese2016b}). 
In that regard, the purpose of our paper is to advance the spectral theory of strongly continuous semigroups 
on Hausdorff locally convex spaces by providing spectral inclusion and spectral mapping theorems, 
which was to the best of our knowledge not done before. To do so, we deeply analyse the corresponding proofs 
in the case of Banach spaces given in \cite{engel_nagel2000} and \cite{vanneerven1996} and modify them to our needs.  

Let us outline the content of our paper. In \prettyref{sect:notions} we recall some notions and results 
related to linear operators and semigroups on Hausdorff locally convex spaces. 
\prettyref{sect:spec_lin_op} is dedicated to different types of spectra of linear operators and how they are related. 
Then we turn to periodic semigroups in \prettyref{sect:periodic_sg} and analyse their spectral behaviour. 
In our final and main section we prove spectral inclusion theorems in \prettyref{thm:spec_incl} 
for the different types of spectra from \prettyref{sect:spec_lin_op}. 
Using our results on periodic semigroups, we show in \prettyref{thm:spec_point} 
that the spectral mapping theorem for the point spectrum of strongly continuous locally equicontinuous semigroups 
$(T(t))_{t\geq 0}$ with generator $(A,D(A))$ on sequentially complete Hausdorff locally convex spaces $X$ holds. Further, we describe the relation between the eigenspaces of $A$ and $T(t)$ in 
\prettyref{thm:spec_point}. Then we turn our attention to the residual spectrum and prove in \prettyref{thm:spec_res}
that the spectral mapping theorem for the residual spectrum of strongly continuous locally equicontinuous 
semigroups holds under some completeness assumptions on $X$, at least if the (algebraic) resolvent set of $A$ 
is non-empty. Finally, we focus on the bounded (sequential) approximate point spectrum. 
We show that for a strongly continuous locally equicontinuous semigroup $(T(t))_{t\geq 0}$ on 
a sequentially complete Hausdorff locally convex space the spectral mapping theorems for those spectra 
hold if $X$ is a generalised Schwartz space (see \prettyref{cor:spec_bounded_ap}) or the semigroup is 
eventually uniformly continuous (see \prettyref{cor:spec_bounded_seq_ap}). 

\section{Notions and preliminaries}
\label{sect:notions}

For a Hausdorff locally convex space $(X,\tau_X)$ we always denote by $\Gamma_{X}$ a fundamental system of seminorms 
that induces the Hausdorff locally convex topology $\tau_X$ on $X$. Further, all Hausdorff locally convex spaces that 
we consider have the complex numbers $\C$ as their scalar field, and if no confusion seems to be likely, 
we just write $X$ instead of $(X,\tau_X)$. We denote by $\cL(X)$ the space of continuous linear operators from $X$ to $X$, 
and by $X'$ the topological dual space of $X$. We write $\cL_{\s}(X)$ for the space $\cL(X)$ equipped with the topology of uniform convergence on finite subsets of $X$, and $\cL_{\ob}(X)$ for the space $\cL(X)$ equipped with the topology of 
uniform convergence on bounded subsets of $X$. On $X'$ we denote the corresponding topologies by $\sigma(X',X)$ and 
$\beta(X',X)$, respectively. For other unexplained notions on the theory of Hausdorff locally convex spaces we refer the reader 
to \cite{jarchow1981,kaballo2014,meisevogt1997,bonet1987}.

We write in short that $(A,D(A))$ is a linear operator on a linear space $X$ over the scalar field $\C$ 
if $D(A)$ is a linear subspace of $X$ and $A\colon D(A)\to X$ a linear operator. 
For a linear operator $(A,D(A))$ on $X$ and $\lambda\in\C$ we 
write $\lambda-A\coloneqq \lambda\id -A$ where $\id$ is the identity map on $X$.

\begin{defn}[{\cite[p.~258]{albanese2013}, \cite[Definition 3.5, p.~923]{albanese2016b}, 
\cite[Chap.~IV, Definition, p.~60]{engel_nagel2000}}]
Let $X$ be a Hausdorff locally convex space and $(A,D(A))$ a linear operator on $X$.
\begin{enumerate}[label=\upshape(\alph*), leftmargin=*]
\item $(A,D(A))$ is called \emph{closed} if for each net $(x_{i})_{i\in I}\subseteq D(A)$ 
satisfying $x_{i}\to x$ and $Ax_{i}\to y$ for some $x,y\in X$, we have $x\in D(A)$ and $Ax=y$. 
If $D(A)=X$, then we just write that $A$ is closed instead of $(A,X)$ closed.
\item $(A,D(A))$ is called \emph{sequentially closed} if for each sequence $(x_{i})_{i\in \N}\subseteq D(A)$ 
satisfying $x_{i}\to x$ and $Ax_{i}\to y$ for some $x,y\in X$, we have $x\in D(A)$ and $Ax=y$. 
\item $(A,D(A))$ is called \emph{densely defined} if $D(A)$ is dense in $X$.
\item Let $(A,D(A))$ be densely defined. The \emph{dual operator} $(A',D(A'))$ of $(A,D(A))$ on $X'$ 
is defined by setting
\[
D(A')\coloneqq\{x'\in X'\;|\;\exists\;y'\in X'\;\forall\;x\in D(A):\;\langle x', Ax\rangle=\langle y', x \rangle\}
\]
and $A'x'\coloneqq y'$ for $x'\in D(A')$.
%We note that the condition that $(A,D(A))$ is densely defined guarantees that $A'$ is well-defined, 
%i.e.~that given $x'\in D(A')$ the existing $y'\in X'$ is unique.
\item Let $Y$ be a linear subspace of $X$. The \emph{part} $(A_{\mid Y},D(A_{\mid Y}))$ of $(A,D(A))$ in $Y$ 
is defined by $A_{\mid Y}y\coloneqq Ay$ for $y\in D(A_{\mid Y})$ with
\[
D(A_{\mid Y})\coloneqq\{y\in D(A)\cap Y\;|\;Ay\in Y\}.
\]
\end{enumerate}
\end{defn}

Next, we recall some notions in the context of semigroups. 

\begin{defn}[{\cite[p.~143]{albanese2010a}, \cite[p.~294]{choe1985}, \cite[Definition 1.1, p.~259]{komura1968}}]\label{defn:semigroup}
Let $X$ be a Hausdorff locally convex space. A family $(T(t))_{t\geq 0}$ in $\mathcal{L}(X)$ is called
\begin{enumerate}[label=\upshape(\roman*), leftmargin=*, widest=iii]
\item a \emph{semigroup} on $X$ if $T(t+s)=T(t)T(s)$ and $T(0)=\id$ for all $t,s\geq 0$,
\item \emph{strongly continuous} if the map $[0,\infty)\to\cL_{\s}(X)$, $t\mapsto T(t)$, is continuous, 
\item \emph{eventually uniformly continuous} on $X$ if $(T(t))_{t\geq 0}$ is strongly continuous and 
there is $t_{\ev}\geq 0$ such that the map $[t_{\ev},\infty)\to\cL_{\ob}(X)$, $t\mapsto T(t)$, is continuous. 
If $t_{\ev}=0$, then $(T(t))_{t\geq 0}$ is called \emph{uniformly continuous}.
\item \emph{locally equicontinuous} if for a fundamental system of seminorms $\Gamma_{X}$ it holds
\[
\forall\;q\in\Gamma_{X},\,t_{0}\geq 0\;\exists\;p\in\Gamma_{X},\,C\geq 0\;
\forall\;t\in [0,t_{0}],\,x\in X:\;q(T(t)x)\leq Cp(x),
\] 
\item \label{it:quasi-equi} \emph{quasi-equicontinuous} if for a fundamental system of seminorms $\Gamma_{X}$ it holds
\[
\exists\;\omega\in\R\;\forall\;q\in\Gamma_{X}\;\exists\;p\in\Gamma_{X},\,C\geq 0\;
\forall\;t\geq 0,\,x\in X:\;q(\mathrm{e}^{-\omega t}T(t)x)\leq Cp(x). 
\] 
If $\omega=0$, then $(T(t))_{t\geq 0}$ is called \emph{equicontinuous}.
\end{enumerate}
\end{defn}

In the case that $X$ is a Banach space the definition of eventual uniform continuity is for example given 
in \cite[p.~35]{vanneerven1992}. 
We recall some observations from \cite[p.~48--49, 79]{kruse_schwenninger2025} regarding the notions in \prettyref{defn:semigroup}. 
We note that the definitions of local equicontinuity and quasi-equicontinuity do not depend on the choice of 
$\Gamma_{X}$. Clearly, quasi-equicontinuity, which is sometimes also called \emph{exponential equicontinuity} 
(see \cite[Definition 2.1, p.~255--256]{albanese2013}), implies local equicontinuity. Moreover, some results 
on automatic local equicontinuity are known. For instance, every strongly continuous semigroup on a barrelled or strong 
Mackey space $X$ is locally equicontinuous by \cite[Proposition 1.1, p.~259]{komura1968} and 
\cite[Lemma 3.2, p.~160]{kraaij2016}. Hence on Fr\'echet spaces every strongly continuous 
semigroup is already locally equicontinuous but there exist strongly continuous semigroups on Fr\'echet spaces which are 
not quasi-equicontinuous by \cite[Remark 2.2 (iii), p.~256]{albanese2013}. 
In contrast, on Banach spaces every strongly continuous semigroup is already quasi-equicontinuous 
by \cite[Chap.~I, 5.5 Proposition, p.~39]{engel_nagel2000}. 
The same is true for so-called bi-continuous semigroups on certain sequentially complete Saks spaces 
w.r.t.~mixed topology by \cite[Theorem 7.4, p.~180]{kraaij2016} (cf.~\cite[Theorem 3.17 (a), p.~13]{kruse_schwenninger2022})
A \emph{Saks space} is a triple $(X,\|\cd\|,\tau)$ where $(X,\|\cd\|)$ is a normed space, 
$\tau$ is a Hausdorff locally convex topology which is coarser than the $\|\cd\|$-topology $\tau_{\|\cd\|}$ and fulfils that 
$\{x\in X\;|\;\|x\|\leq 1\}$ is $\tau$-closed 
(see \cite[I.3.2 Definition, p.~27--28]{cooper1978} and \cite[Section 2.1]{wiweger1961}). 
The mixed topology $\gamma\coloneqq \gamma(\|\cd\|,\tau)$ is then the finest linear topology on $X$ 
that coincides with $\tau$ on $\|\cd\|$-bounded sets and such that $\tau\leq \gamma \leq \tau_{\|\cd\|}$. 
The mixed topology $\gamma$ is Hausdorff locally convex and 
the definition given here is equivalent to the one from the literature \cite[Section 2.1]{wiweger1961} 
due to \cite[Lemmas 2.2.1, 2.2.2, p.~51]{wiweger1961}. 

Moreover, we recall from \cite[p.~260]{komura1968} that the \emph{generator} $(A,D(A))$ 
of a strongly continuous semigroup $(T(t))_{t\geq 0}$ on a Hausdorff locally convex space $X$ is defined by 
\[
D(A)\coloneqq \Bigl\{x\in X\;|\;\lim_{t\to 0\rlim}\frac{T(t)x-x}{t}\;\text{exists in }X\Bigr\}
\] 
and 
\[
Ax\coloneqq \lim_{t\to 0\rlim}\frac{T(t)x-x}{t},\quad x\in D(A).
\]
If $X$ is sequentially complete, then $D(A)$ is dense in $X$ 
by \cite[Proposition 1.3, p.~261]{komura1968}, so $(A,D(A))$ is densely defined in this case. 
If $(T(t))_{t\geq 0}$ is locally equicontinuous, then the generator $(A,D(A))$ is closed by 
\cite[Proposition 1.4, p.~262]{komura1968}. 

\begin{prop}[{\cite[Lemma 1, p.~450]{wegner2014}}]\label{prop:rescale_sg}
Let $\lambda\in\C$, $c>0$, $X$ be a Hausdorff locally convex space and $(T(t))_{t\geq 0}$ a strongly continuous semigroup on 
$X$ with generator $(A,D(A))$. Then the family $(S(t))_{t\geq 0}$ defined by $S(t)\coloneqq \e^{-\lambda t}T(ct)$, 
$t\geq 0$, is a strongly continuous semigroup on $X$ with generator $(B,D(B))$ where $B=cA-\lambda$ and $D(B)=D(A)$. 
In addition, if $(T(t))_{t\geq 0}$ is locally (or quasi-)\linebreak[0]{}equicontinuous, 
then so is $(S(t))_{t\geq 0}$.
\end{prop}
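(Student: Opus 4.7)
The plan is to verify the four assertions in order: (a) the semigroup law for $(S(t))_{t\geq 0}$, (b) its strong continuity, (c) the identification of its generator, and (d) the preservation of local/quasi-equicontinuity. All four should be essentially direct computations, so the result is really a bookkeeping exercise; the only subtle point will be the generator identification.

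For (a), the semigroup law follows immediately from $S(t+s)=\e^{-\lambda(t+s)}T(c(t+s))=\e^{-\lambda t}\e^{-\lambda s}T(ct)T(cs)=S(t)S(s)$ and $S(0)=\id$. For (b), the map $t\mapsto T(ct)$ is the composition of $t\mapsto ct$ with the strongly continuous map $t\mapsto T(t)$, hence strongly continuous, and multiplication by the continuous scalar function $t\mapsto\e^{-\lambda t}$ preserves strong continuity in $\cL_{\s}(X)$.

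For (c), for $x\in D(A)$ I would split
\[
\frac{S(t)x-x}{t}=\frac{\e^{-\lambda t}-1}{t}T(ct)x+c\cdot\frac{T(ct)x-x}{ct}
\]
and let $t\to 0^{+}$. Strong continuity of $(T(t))_{t\geq 0}$ gives $T(ct)x\to x$, the first factor in the first summand tends to $-\lambda$, and the second summand tends to $cAx$ by the definition of $A$ together with $c>0$. Hence $x\in D(B)$ with $Bx=cAx-\lambda x$, so $D(A)\subseteq D(B)$ and $B$ extends $cA-\lambda$ on $D(A)$. For the reverse inclusion, applied to the semigroup $(S(t))_{t\geq 0}$ with the scalar $-\lambda$ replaced by $\lambda$ and $c$ replaced by $1/c$, the same computation yields a generator which agrees with $\frac{1}{c}(B+\lambda)$ on $D(B)$ and whose semigroup is $(T(t))_{t\geq 0}$, giving $D(B)\subseteq D(A)$ and $A=\frac{1}{c}(B+\lambda)$, i.e.\ $B=cA-\lambda$ with $D(B)=D(A)$.

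For (d), suppose $(T(t))_{t\geq 0}$ is locally equicontinuous. Given $q\in\Gamma_{X}$ and $t_{0}\geq 0$, choose $p\in\Gamma_{X}$ and $C\geq 0$ such that $q(T(t)x)\leq Cp(x)$ for all $t\in[0,ct_{0}]$ and $x\in X$. Then for $t\in[0,t_{0}]$,
\[
q(S(t)x)=\e^{-\re(\lambda)t}q(T(ct)x)\leq\bigl(\sup_{t\in[0,t_{0}]}\e^{-\re(\lambda)t}\bigr)Cp(x),
\]
which is a local-equicontinuity estimate for $(S(t))_{t\geq 0}$. For quasi-equicontinuity, fix $\omega\in\R$ witnessing the property for $(T(t))_{t\geq 0}$ and set $\omega'\coloneqq c\omega-\re(\lambda)$. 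For $q\in\Gamma_{X}$ and $p,C$ such that $q(\e^{-\omega t}T(t)x)\leq Cp(x)$ for all $t\geq 0$, the estimate
\[
q(\e^{-\omega' t}S(t)x)=\e^{(c\omega-\omega'-\re(\lambda))t}q(\e^{-c\omega t}T(ct)x)\leq Cp(x)
\]
holds for all $t\geq 0$, which is the required bound. The main (minor) obstacle is simply making sure the rescaling by $c$ in the argument of $T$ is correctly tracked through both the generator computation and the equicontinuity estimates; everything else is formal.
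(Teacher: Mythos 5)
Your proof is correct and essentially complete, but it takes a different route from the paper: the paper does not prove \prettyref{prop:rescale_sg} directly at all, instead citing \cite[Lemma 1, p.~450]{wegner2014} (stated there for $c=1$, complete $X$ and quasi-equicontinuous semigroups) and remarking that the proof is easily adjusted to $c>0$, to non-complete $X$ and to merely locally equicontinuous semigroups. Your argument supplies the self-contained verification that the paper delegates to the literature: the semigroup law and strong continuity are immediate, the splitting $\frac{S(t)x-x}{t}=\frac{\e^{-\lambda t}-1}{t}T(ct)x+c\,\frac{T(ct)x-x}{ct}$ together with joint continuity of scalar multiplication gives $D(A)\subseteq D(B)$ and $Bx=cAx-\lambda x$, the inverse rescaling gives the reverse inclusion, and the equicontinuity estimates are tracked correctly (in particular the choice $\omega'=c\omega-\re(\lambda)$ makes the exponential prefactor equal to $1$). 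One minor bookkeeping slip: to invert the rescaling you must write $T(t)=\e^{(\lambda/c)t}S(t/c)$, so in the form $\e^{-\mu t}U(c't)$ the new scalar is $\mu=-\lambda/c$ (not $\lambda$ with the sign flipped, as your phrasing suggests); your stated conclusion that the resulting generator is $\frac{1}{c}(B+\lambda)$ on $D(B)$ is nevertheless the correct one, so nothing downstream is affected. The benefit of your approach is that the reader need not consult \cite{wegner2014} and check the claimed adjustments; the cost is length, which is presumably why the paper chose the citation.
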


$(S(t))_{t\geq 0}$ is called a \emph{rescaled semigroup}. 
\prettyref{prop:rescale_sg} is stated in \cite{wegner2014} for $c=1$, complete Hausdorff locally convex spaces $X$ and 
strongly continuous quasi-equicontinuous semigroups $(T(t))_{t\geq 0}$. However, looking at the proof of 
\cite[Lemma 1, p.~450]{wegner2014} it is easily adjustable to the case $c>0$, the assumption of completeness 
is not needed and it also holds for locally equicontinuous $(T(t))_{t\geq 0}$ with the only difference 
that the rescaled semigroup $(S(t))_{t\geq 0}$ is then also only locally equicontinuous. 

The next two identities generalise \cite[Chap.~II, 1.9 Lemma, p.~55]{engel_nagel2000} from Banach spaces to 
sequentially complete Hausdorff locally convex spaces. 

\begin{prop}\label{prop:rescale_identities}
Let $X$ be a sequentially complete Hausdorff locally convex space and $(T(t))_{t\geq 0}$ a strongly continuous 
semigroup on $X$ with generator $(A,D(A))$. Then for all $\lambda\in\C$, $t\geq 0$ and $x\in X$ we have 
$\int_{0}^{t}\e^{-\lambda s}T(s)x \d s\in D(A)$ and the following identities hold
\begin{align}
  \e^{-\lambda t}T(t)x-x
&=(A-\lambda)\int_{0}^{t}\e^{-\lambda s}T(s)x \d s &&\text{if }x\in X, \label{eq:rescale_iden_1}\\
&=\int_{0}^{t}\e^{-\lambda s}T(s)(A-\lambda)x \d s &&\text{if }x\in D(A), \label{eq:rescale_iden_2}
\end{align}
where the integrals above are Riemann integrals.
\end{prop}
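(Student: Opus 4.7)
The plan is to reduce to the case $\lambda=0$ by rescaling, establish both identities for the rescaled semigroup directly, and then translate back. By \prettyref{prop:rescale_sg} the family $(S(s))_{s\geq 0}$ defined by $S(s)\coloneqq \e^{-\lambda s}T(s)$ is a strongly continuous semigroup on $X$ with generator $(B,D(B))=(A-\lambda,D(A))$, so \eqref{eq:rescale_iden_1} and \eqref{eq:rescale_iden_2} amount to
\[
S(t)x-x=B\int_{0}^{t}S(s)x\d s\quad (x\in X)\quad\text{and}\quad S(t)x-x=\int_{0}^{t}S(s)Bx\d s\quad (x\in D(B)).
\]
The Riemann integrals exist: by strong continuity $s\mapsto S(s)x$ is continuous on the compact interval $[0,t]$, hence uniformly continuous with respect to every seminorm in $\Gamma_{X}$, so the corresponding Riemann sums form a Cauchy net in $X$ which converges by sequential completeness.

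For the first identity I set $y\coloneqq \int_{0}^{t}S(s)x\d s$ and compute, using the semigroup property $S(s)S(h)=S(s+h)$ and a translation of the integration variable,
\[
\frac{S(h)y-y}{h}=\frac{1}{h}\int_{0}^{t}\bigl(S(s+h)-S(s)\bigr)x\d s=\frac{1}{h}\int_{t}^{t+h}S(s)x\d s-\frac{1}{h}\int_{0}^{h}S(s)x\d s.
\]
As $h\to 0\rlim$, continuity of $s\mapsto S(s)x$ at $t$ and at $0$ forces the two terms on the right to converge to $S(t)x$ and $x$ respectively. Hence $y\in D(B)$ with $By=S(t)x-x$, which proves \eqref{eq:rescale_iden_1}.

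The main obstacle is \eqref{eq:rescale_iden_2}: since no equicontinuity has been assumed, the generator $B$ need not be closed, so one cannot simply commute $B$ through the Riemann integral as in the Banach space proof of \cite[Chap.~II, Lemma 1.9, p.~55]{engel_nagel2000}. I would bypass this with a Hahn--Banach detour. Fix $x\in D(B)$ and $x'\in X'$, and set $\phi(s)\coloneqq \langle x',S(s)x\rangle$. Since $x'\circ S(s)$ lies in $X'$ for every $s$, strong continuity and the definition of $B$ give
\[
\lim_{h\to 0\rlim}\frac{\phi(s+h)-\phi(s)}{h}=\lim_{h\to 0\rlim}\Bigl\langle x',S(s)\frac{S(h)x-x}{h}\Bigr\rangle=\langle x',S(s)Bx\rangle,
\]
so $\phi$ is continuous on $[0,t]$ with continuous right derivative $s\mapsto\langle x',S(s)Bx\rangle$. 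The classical scalar fundamental theorem (a continuous function on $[0,t]$ with a continuous right derivative is the indefinite integral of that derivative) then yields
\[
\langle x',S(t)x-x\rangle=\int_{0}^{t}\langle x',S(s)Bx\rangle\d s=\Bigl\langle x',\int_{0}^{t}S(s)Bx\d s\Bigr\rangle,
\]
the last equality being the standard fact that continuous linear functionals commute with Riemann integrals. As $X'$ separates points of $X$ by Hahn--Banach, \eqref{eq:rescale_iden_2} follows after undoing the rescaling.
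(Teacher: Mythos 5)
Your proof is correct and follows the same route as the paper: both reduce to $\lambda=0$ by passing to the rescaled semigroup $S(s)=\e^{-\lambda s}T(s)$ via \prettyref{prop:rescale_sg} and then establish the two basic identities for a strongly continuous semigroup on a sequentially complete space. The only difference is that the paper cites these identities from \cite{komura1968}, whereas you prove them directly --- your difference-quotient argument for \eqref{eq:rescale_iden_1} and the Hahn--Banach plus scalar fundamental-theorem detour for \eqref{eq:rescale_iden_2}, which correctly avoids any appeal to closedness of the generator, are both sound.
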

\begin{proof}
Let $\lambda\in\C$ and $x\in X$. The claim that $\int_{0}^{t}\e^{-\lambda s}T(s)x \d s\in D(A)$ 
and the two identities follow from \cite[Corollary, p.~261]{komura1968} and 
\cite[Proposition 1.2 (2), p.~260]{komura1968} applied to the rescaled semigroup 
$(S(t))_{t\geq 0}$ from \prettyref{prop:rescale_sg} given by $S(t)\coloneqq \e^{-\lambda t}T(t)$, $t\geq 0$.
\end{proof}

Let $X$ be a Hausdorff locally convex space. We call a Hausdorff locally convex space $Y$ \emph{continuously embedded} 
(in $X$) if there is an injective continuous linear map $j\colon Y\to X$. 
In this case we write $Y\hookrightarrow X$ for short. Let $(T(t))_{t\geq 0}$ be a strongly continuous semigroup on 
$X$. We call a continuously embedded space $Y$ \emph{$(T(t))_{t\geq 0}$-invariant} if $T(t)j(y)\in j(Y)$ 
for all $t\geq 0$ (see e.g.~\cite[p.~43]{engel_nagel2000} in the case that $X$ is a Banach space, 
$Y$ a closed subspace and the semigroup strongly continuous). Further, we usually omit the map $j$ and just write 
$T(t)_{\mid Y}y\coloneqq T(t)y\coloneqq T(t)j(y)$ for all $y\in Y$ and $t\geq 0$ in such a case. 
The family $(T(t)_{\mid Y})_{t\geq 0}$ is then a semigroup on $Y$, which we call the \emph{restricted semigroup}, 
but it might not be strongly continuous w.r.t.~the topology of $Y$. 
Our following result generalises \cite[Chap.~II, Proposition, Corollary, p.~60--61]{engel_nagel2000}.

\begin{prop}\label{prop:restricted_sg}
Let $X$ be a Hausdorff locally convex space and $(T(t))_{t\geq 0}$ a strongly continuous 
semigroup on $X$ with generator $(A,D(A))$. Let $Y\hookrightarrow X$ be a $(T(t))_{t\geq 0}$-invariant 
Hausdorff locally convex space. Then the following assertions hold.
\begin{enumerate}[label=\upshape(\alph*), leftmargin=*]
\item \label{it:rest_sg_0} If $Y$ is sequentially complete and $(T(t)_{\mid Y})_{t\geq 0}$ 
a strongly continuous semigroup on $Y$, then the generator of $(T(t)_{\mid Y})_{t\geq 0}$ 
is the part $(A_{\mid Y},D(A_{\mid Y}))$ of $(A,D(A))$ in 
$Y$.
\end{enumerate}
Suppose for \ref{it:rest_sg_1}--\ref{it:rest_sg_3} that $Y$ is a topological subspace of $X$, i.e.~the embedding $Y\hookrightarrow X$ is a topological isomorphism to its range.
\begin{enumerate}[label=\upshape(\alph*), leftmargin=*]\setcounter{enumi}{1}
\item \label{it:rest_sg_1} $(T(t)_{\mid Y})_{t\geq 0}$ is a strongly continuous semigroup on $Y$. 
\item \label{it:rest_sg_2} If $(T(t))_{t\geq 0}$ is (locally, quasi-) equicontinuous, then $(T(t)_{\mid Y})_{t\geq 0}$ 
is (locally, quasi-) equicontinuous.
\item \label{it:rest_sg_3} If $X$ is sequentially complete and $Y$ sequentially closed, 
then the generator of $(T(t)_{\mid Y})_{t\geq 0}$ is the part $(A_{\mid Y},D(A_{\mid Y}))$ of $(A,D(A))$ in $Y$
and its domain fulfils
$
D(A_{\mid Y})=D(A)\cap Y.
$
\end{enumerate}
\end{prop}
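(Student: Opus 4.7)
The plan is to handle the four parts in the order (a), (b), (c), (d), with (d) drawing on both (a) and (b). For (a), I would introduce $(B,D(B))$ as the generator of the restricted semigroup on $Y$ and establish both inclusions $B\subseteq A_{\mid Y}$ and $A_{\mid Y}\subseteq B$. The first is a continuity argument: a convergent difference quotient in $Y$ remains convergent in $X$ under the continuous embedding $j\colon Y\hookrightarrow X$, so any $y\in D(B)$ satisfies $y\in D(A)$ with $Ay=By\in Y$, hence $y\in D(A_{\mid Y})$. For the reverse inclusion, the key input is the Riemann integral identity \eqref{eq:rescale_iden_2} from \prettyref{prop:rescale_identities} with $\lambda=0$: for $y\in D(A_{\mid Y})$ we have $Ay\in Y$, so $s\mapsto T(s)_{\mid Y}Ay$ is a continuous $Y$-valued curve, and sequential completeness of $Y$ allows its Riemann integral to be formed inside $Y$; continuity of $j$ identifies this with the $X$-integral expression for $T(t)y-y$, and dividing by $t$ and taking $t\to 0^{+}$ yields $y\in D(B)$ with $By=Ay$.

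For (b) and (c), I would exploit that when $Y$ is a topological subspace of $X$, a fundamental system of seminorms on $Y$ arises by restricting one on $X$, so convergence of $Y$-valued nets in $Y$ coincides with their convergence in $X$. Strong continuity of $t\mapsto T(t)_{\mid Y}y$ on $Y$ then follows directly from that of $t\mapsto T(t)y$ on $X$ together with the $(T(t))_{t\geq 0}$-invariance of $Y$, while the transfer of equicontinuity, local equicontinuity, and quasi-equicontinuity is obtained simply by restricting the relevant seminorm estimates to vectors of $Y$.

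For (d), I would first observe that a topological subspace of $X$ which is sequentially closed in the sequentially complete $X$ is itself sequentially complete; combined with (b), this activates the hypotheses of (a), so the generator of $(T(t)_{\mid Y})_{t\geq 0}$ equals $(A_{\mid Y},D(A_{\mid Y}))$. The remaining identification $D(A_{\mid Y})=D(A)\cap Y$ reduces to showing $Ay\in Y$ whenever $y\in D(A)\cap Y$; this I would obtain by picking a positive null sequence $(t_n)_{n\in\N}$, noting that $(T(t_n)y-y)/t_n\in Y$ by the invariance of $Y$, and invoking sequential closedness of $Y$ since this sequence converges to $Ay$ in $X$. The main subtlety will be aligning the completeness hypotheses, particularly in (a) where sequential completeness of $Y$ is needed precisely to keep the integral from \prettyref{prop:rescale_identities} internal to $Y$; without it, the inclusion $A_{\mid Y}\subseteq B$ could fail, and the generator on $Y$ might properly extend the part of $A$ in $Y$.
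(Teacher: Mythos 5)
Your proposal follows essentially the same route as the paper's proof: the easy inclusion in \ref{it:rest_sg_0} by continuity of the embedding, the reverse inclusion via the identity $T(t)y-y=\int_{0}^{t}T(s)Ay\,\d s$ with the integral kept inside $Y$ by sequential completeness of $Y$ and then identified through $j$, the restriction of seminorm estimates for \ref{it:rest_sg_1}--\ref{it:rest_sg_2}, and sequential closedness applied to the difference quotients for \ref{it:rest_sg_3}. The only cosmetic difference is that in \ref{it:rest_sg_0} the paper forms the $X$-side integral in the completion $\widehat{X}$ and quotes \cite[Proposition 1.2 (2), p.~260]{komura1968} directly, since $X$ is not assumed sequentially complete there and so \prettyref{prop:rescale_identities} is not literally applicable; your argument is otherwise identical.
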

\begin{proof}
\ref{it:rest_sg_0} Let $(C,D(C))$ be the generator of $(T(t)_{\mid Y})_{t\geq 0}$. If $y\in D(C)\subseteq Y$, 
then 
\[
 Y\ni Cy
=\lim_{t\to 0\rlim}\frac{T(t)_{\mid Y}y-y}{t}
=\lim_{t\to 0\rlim}\frac{T(t)y-y}{t}
=Ay
\]
which yields $D(C)\subseteq (D(A)\cap Y)$ and $D(C)\subseteq D(A_{\mid Y})$. 

Now, we turn to the converse inclusion. Let $y \in D(A_{\mid Y})$. Then $Ay\in Y$ and 
we note that $\int_{0}^{t}T(s)_{\mid Y}Ay\d s\in Y$ for all $t\geq 0$ by \cite[Theorem 10, p.~317]{albanese2012} (cf.~\cite[Proposition 1.1, p.~232]{komatsu1964}) since $Y$ is sequentially complete and 
$(T(s)_{\mid Y})_{s\geq 0}$ strongly continuous on $Y$. Furthermore, the map 
$f\colon [0,\infty) \to X$, $f(s)\coloneqq T(s)Ay$, is continuous as $(T(t))_{t\geq 0}$ is strongly continuous 
on $X$. Therefore the Riemann integral of $f$ on $[0,t]$ exists for all $t\geq 0$ 
in the completion $(\widehat{X},\widehat{\tau}_{X})$ of $X$ by \cite[Theorem 10, p.~317]{albanese2012} again 
where $\widehat{\tau}_{X}$ denotes the Hausdorff locally convex topology on $\widehat{X}$. 
We write $\widehat{\tau}_{X}\text{-}\int_{0}^{t}T(s)Ay\d s$ for this integral and observe that 
\[
 \int_{0}^{t}T(s)_{\mid Y}Ay\d s
=\widehat{\tau}_{X}\text{-}\int_{0}^{t}T(s)Ay\d s
=T(t)y-y
=T(t)_{\mid Y}y-y
\]
for all $t\geq 0$ by \cite[Proposition 1.2 (2), p.~260]{komura1968} (applied to $(T(s))_{s\geq 0}$ on $X$) . 
Now, \cite[Proposition 1.2 (2), p.~260]{komura1968} (applied to $(T(s)_{\mid Y})_{s\geq 0}$ on $Y$) 
implies that $y\in D(C)$ and $Cy=Ay$. 

\ref{it:rest_sg_1} and \ref{it:rest_sg_2} are obvious. 

\ref{it:rest_sg_3} We note that $Y$ is sequentially complete as a sequentially closed subspace of $X$. 
Therefore $(A_{\mid Y},D(A_{\mid Y}))$ is the generator of $(T(t)_{\mid Y})_{t\geq 0}$ by parts \ref{it:rest_sg_0} 
and \ref{it:rest_sg_1}.
Further, we have $D(A_{\mid Y})\subseteq (D(A)\cap Y)$ by definition. Let $y\in D(A)\cap Y$. 
Then $T(t)y\in Y$ for all $t\geq 0$ and 
\[
\lim_{t\to 0\rlim}\frac{T(t)y-y}{t}=Ay\in X, 
\]
which implies $Ay\in Y$ as $Y$ is sequentially closed in $X$. Hence we have $y\in D(A_{\mid Y})$ and so 
$(D(A)\cap Y)\subseteq D(A_{\mid Y})$.
\end{proof}

Now, we turn to a special case in the setting of dual semigroups. 
Let $X$ be a Hausdorff locally convex space and $(T(t))_{t\geq 0}$ a strongly continuous semigroup on $X$ 
with generator $(A,D(A))$. 
Then the family $(T'(t))_{t\geq 0}$ in $\cL(X_{\s}')\subseteq \cL(X_{\ob}')$ %\cite[Lemma 23.29, p.~273]{meisevogt1997}
defined by $T'(t)\coloneqq T(t)'$ for $t\geq 0$ is a $\sigma(X',X)$-strongly 
continuous semigroup on $X'$. The family $(T'(t))_{t\geq 0}$ is called the \emph{dual semigroup} of 
$(T(t))_{t\geq 0}$. If $X$ is sequentially complete, then $(A,D(A))$ is densely defined and $(A',D(A'))$ is 
the generator of $(T'(t))_{t\geq 0}$ by \cite[Proposition 2.1, p.~263]{komura1968}. 
In general, $(T'(t))_{t\geq 0}$ might not be $\beta(X',X)$-strongly continuous and we define 
\[
X^{\odot}\coloneqq\{x'\in X'\;|\; T'(\cdot)x'\colon [0,\infty)\to X' \text{ is }\beta(X',X)\text{-continuous}\}.
\]
If $X$ is a Banach space, then $X^{\odot}$ is called the \emph{sun dual} (see \cite[p.~5]{vanneerven1992} and \prettyref{rem:sun_dual}).
Related to \prettyref{prop:restricted_sg} \ref{it:rest_sg_0} with $Y\coloneqq (X^{\odot},\beta(X',X))$ 
we recall the following result, which we need later on.

\begin{thm}[{\cite[Theorem 1, p.~263]{komura1968}}]\label{thm:dual_str_cont}
Let $X$ be a sequentially complete Hausdorff locally convex space such that $X_{\ob}'$ is sequentially complete 
and $(T(t))_{\geq 0}$ a strongly continuous semigroup on $X$ with generator $(A,D(A))$. 
Then it holds that $X^{\odot}=\overline{D(A')}^{\beta(X',X)}$, in particular $X^{\odot}$ is $\beta(X',X)$-closed, 
%and $\sigma(X',X)$-dense if $X$ is in addition barrelled %% because then $D(A')$ is $\sigma(X',X)$-dense by \cite[Proposition 1.3, p.~261]{komura1968} and the sequential completeness of $(X',\sigma(X',X))$ by \cite[11.1.4 Proposition, p.~220]{jarchow1981}
and it is a $(T'(t))_{t\geq 0}$-invariant linear subspace of $X'$. 
Moreover, the restricted semigroup $(T^{\odot}(t))_{t\geq 0}\coloneqq (T'(t)_{\mid X^{\odot}})_{t\geq 0}$ is 
$\beta(X',X)$-strongly continuous on $X^{\odot}$. Its generator $(A^{\odot},D(A^{\odot}))$ coincides with the part 
$(A_{\mid X^{\odot}}',D(A_{\mid X^{\odot}}'))$ of $(A',D(A'))$ in $X^{\odot}$ and its domain fulfils
\[
D(A^{\odot})=\{x'\in D(A')\;|\;A'x'\in X^{\odot}\}.
\]
If $(T(t))_{t\geq 0}$ is (locally, quasi-) equicontinuous, then $(T^{\odot}(t))_{t\geq 0}$ is 
(locally, quasi-) $\beta(X',X)$-equicontinuous.
\end{thm}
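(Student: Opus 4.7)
The plan is to first establish the identity $X^{\odot} = \overline{D(A')}^{\beta(X',X)}$; from this the closedness and $\sigma(X',X)$-density claims drop out immediately, and the remaining assertions follow by standard semigroup manipulations combined with \prettyref{prop:restricted_sg}\ref{it:rest_sg_0}.

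I would begin with the inclusion $D(A') \subseteq X^{\odot}$. Dualising the identity from \prettyref{prop:rescale_identities} gives, for $x' \in D(A')$ and every $x \in X$,
\begin{equation*}
\langle T'(t)x' - x',\, x\rangle \;=\; \int_{0}^{t} \langle A'x',\, T(s)x \rangle \,\d s, \qquad t\geq 0.
\end{equation*}
For each bounded $B \subseteq X$ and each $t_{0}>0$, the set $\{T(s)x : s\in[0,t_{0}],\, x\in B\}$ is bounded in $X$, so the right-hand side is dominated uniformly in $x \in B$ by $t$ times a constant depending on $B$; letting $t \to 0^{+}$ yields $T'(t)x' \to x'$ in $\beta(X',X)$, i.e.~$x' \in X^{\odot}$. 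The $\beta(X',X)$-closedness of $X^{\odot}$ then follows by a standard three-$\varepsilon$ argument exploiting the $\beta(X',X)$-continuity of each $T'(t)$ (a consequence of $T(t) \in \cL(X)$ preserving bounded sets).

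For the reverse inclusion and the $\sigma(X',X)$-density, I would introduce the averages
\begin{equation*}
x'_{h} \coloneqq \tfrac{1}{h}\int_{0}^{h} T'(s)x' \,\d s, \qquad x' \in X',\; h>0,
\end{equation*}
understood weakly via $\langle x'_{h},x\rangle = \tfrac{1}{h}\int_{0}^{h}\langle x', T(s)x\rangle\,\d s$. A short computation based on \prettyref{prop:rescale_identities} shows that each $x'_{h}$ lies in $D(A')$ with $A'x'_{h} = (T'(h)x' - x')/h$, and that $x'_{h} \to x'$ in $\sigma(X',X)$ as $h \to 0^{+}$; this already supplies the $\sigma(X',X)$-density of $D(A')$ in $X'$ and hence of $X^{\odot}$. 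For $x' \in X^{\odot}$, rewriting $x'_{h} - x' = \tfrac{1}{h}\int_{0}^{h}(T'(s)x' - x')\,\d s$ and invoking the defining property of $X^{\odot}$ upgrades the convergence to $\beta(X',X)$, whence $X^{\odot} \subseteq \overline{D(A')}^{\beta(X',X)}$.

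Invariance and $\beta(X',X)$-strong continuity of the restricted semigroup are then formal: the identity $T'(s)T'(t)x' - T'(t)x' = T'(t)(T'(s)x' - x')$ together with the $\beta(X',X)$-continuity of $T'(t)$ forces $T'(t)X^{\odot}\subseteq X^{\odot}$ and $\beta$-strong continuity of $(T^{\odot}(t))_{t\geq 0}$ at every $t \geq 0$. To identify the generator, I would apply \prettyref{prop:restricted_sg}\ref{it:rest_sg_0} with $Y \coloneqq (X^{\odot}, \beta(X',X))$; this $Y$ is sequentially complete as a $\beta(X',X)$-closed subspace of the sequentially complete space $X'_{\ob}$, and the proposition realises $(A^{\odot}, D(A^{\odot}))$ as the part of $(A', D(A'))$ in $X^{\odot}$, from which the stated description of $D(A^{\odot})$ is read off. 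Transfer of (local/quasi-)\linebreak[0]{}equicontinuity from $(T(t))_{t\geq 0}$ to $(T^{\odot}(t))_{t\geq 0}$ is routine via the duality pairing. The principal obstacle is the upgrade from $\sigma(X',X)$- to $\beta(X',X)$-convergence in the two crucial steps (the inclusion $D(A') \subseteq X^{\odot}$ and the convergence of averages on $X^{\odot}$), since duality natively supplies only weak information; controlling the integrals uniformly on bounded subsets of $X$ — where sequential completeness of $X'_{\ob}$ enters decisively — is the technical core of the argument.
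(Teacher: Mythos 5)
First, a point of comparison: the paper does not prove this statement at all --- it is imported verbatim from K\={o}mura (Theorem 1, p.~263 of the cited work) --- so there is no in-paper argument to measure yours against. Your proposal follows the classical sun-dual construction (the same strategy as the cited source and as Phillips' Banach-space theory), and its architecture --- $D(A')\subseteq X^{\odot}$, $\beta(X',X)$-closedness of $X^{\odot}$, Ces\`aro averages for the reverse inclusion and the $\sigma(X',X)$-density, then \prettyref{prop:restricted_sg} applied to $Y\coloneqq(X^{\odot},\beta(X',X))$ for the generator --- is the right one; the last step is exactly what the paper itself signals in the sentence preceding the theorem, and your verification that $X^{\odot}$ is sequentially complete as a $\beta(X',X)$-closed subspace of $X'_{\ob}$ is correct.

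That said, two steps are genuinely unsupported under the stated hypotheses, which assume only strong continuity of $(T(t))_{t\geq 0}$. First, you assert that $\{T(s)x\;|\;s\in[0,t_{0}],\,x\in B\}$ is bounded for every bounded $B\subseteq X$. This does not follow from strong continuity on a general locally convex space: it says that $\{T(s)\;|\;s\in[0,t_{0}]\}$ is bounded in $\cL_{\ob}(X)$, whereas strong continuity only gives boundedness in $\cL_{\s}(X)$, and the two coincide only under additional hypotheses such as local equicontinuity or barrelledness. You rely on this boundedness at least three times: in the estimate proving $D(A')\subseteq X^{\odot}$, in the three-$\varepsilon$ argument for the $\beta(X',X)$-closedness of $X^{\odot}$ (the first term equals $\sup_{x\in T(t)B}|\langle x'-x'_{\alpha},x\rangle|$ and must be made small uniformly in $t\in[0,t_{0}]$), and implicitly for the left-continuity of $(T^{\odot}(t))_{t\geq 0}$, since your commutation identity only yields right-continuity. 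Second, you never verify that the averages $x'_{h}$ are continuous, i.e.\ that $x'_{h}\in X'$ at all; writing $\langle x'_{h},x\rangle=\frac{1}{h}\langle x',\int_{0}^{h}T(s)x\,\d s\rangle$ reduces this to the continuity of $x\mapsto\int_{0}^{h}T(s)x\,\d s$, which again requires equicontinuity of $\{T(s)\;|\;s\in[0,h]\}$. For $x'\in X^{\odot}$ one can instead realise $\int_{0}^{h}T'(s)x'\,\d s$ as a Riemann integral in $X'_{\ob}$ --- this is precisely where the sequential completeness of $X'_{\ob}$ earns its keep --- but for general $x'\in X'$, which you need for the $\sigma(X',X)$-density claim, no such route is available. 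Both gaps close if one adds local equicontinuity of $(T(t))_{t\geq 0}$, which is K\={o}mura's standing assumption and holds in every application the paper makes of this theorem; as the proposal stands, however, these steps are not justified.
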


\begin{rem}\label{rem:sun_dual}
Let $X$ be a Hausdorff locally convex space and $(T(t))_{t\geq 0}$ a strongly continuous semigroup on $X$. Then we clearly have
\[
X^{\odot}\subseteq\{x'\in X'\;|\;\beta(X',X)\text{-}\lim_{t\to 0\rlim}T'(t)x'-x'=0\}\eqqcolon X_0.
\]
If in addition $(T(t))_{t\geq 0}$ is locally equicontinuous, then $X^{\odot}=X_0$. 
Indeed, the local equicontinuity of $(T(t))_{t\geq 0}$ implies that $(T'(t))_{t\geq 0}$ is locally $\beta(X',X)$-equicontin\-uous 
and so the inclusion $X^{\odot}\subseteq X_0$ follows from the proof of \cite[Remark 1 (iii), p.~302]{albanese2012}.
\end{rem}

\section{Spectra of linear operators}
\label{sect:spec_lin_op}

In this section we introduce different notions of spectra of a linear operator $(A,D(A))$ and present some results 
about these spectra and their relations.

\begin{defn}[{\cite[p.~258]{albanese2013}, \cite[p.~269]{albanese2016a}}]\label{defn:spectrum}
Let $X$ be a Hausdorff locally convex space and $(A,D(A))$ a linear operator on $X$. 
If $\lambda\in\C$ is such that $\lambda-A\colon D(A)\to X$ is injective, then the linear operator 
$(\lambda-A)^{-1}$ exists and is defined on the domain $\ran(\lambda-A)\coloneqq \{(\lambda-A)x\;|\;x\in D(A)\}$, 
i.e.~the range of $\lambda-A$. The \emph{resolvent set} of $A$ is defined by 
\[
\rho(A)\coloneqq\{\lambda\in\C\;|\;\lambda-A\text{ is bijective and }(\lambda-A)^{-1}\in\cL(X)\}.
\]
If $\lambda\in\rho(A)$, we write $R(\lambda,A)\coloneqq (\lambda-A)^{-1}$ and call it the \emph{resolvent} of $A$ 
in $\lambda$. Further, we call $\sigma(A)\coloneqq\C\setminus\rho(A)$ the \emph{spectrum} of $A$. 
Moreover, we define the subset $\rho^{\ast}(A)\subseteq\rho(A)$ consisting of all $\lambda\in\rho(A)$ such that 
there is $\delta>0$ which fulfils $B(\lambda,\delta)\coloneqq\{\mu\in\C\;|\;|\mu-\lambda|<\delta\}\subseteq\rho(A)$ 
and that the set $\{R(\mu,A)\;|\;\mu\in B(\lambda,\delta)\}$ is equicontinuous in $\cL(X)$. 
In addition, we write $\sigma^{\ast}(A)\coloneqq \C\setminus \rho^{\ast}(A)$.
\end{defn}

If $(A,D(A))$ is a linear operator on a Hausdorff locally convex space $X$ such that $\rho(A)\neq\varnothing$, 
then $(A,D(A))$ is already closed by \cite[Remark 3.1 (i), p.~259]{albanese2013}. 
An example of a closed linear operator on a Banach space $X$ such that $\rho(A)=\varnothing$ is given in 
\cite[Chap.~IV, 1.5 Examples (i), p.~241]{engel_nagel2000}.
If $(A,D(A))$ generates a strongly continuous quasi-equicontinuous semigroup on a sequentially complete 
Hausdorff locally convex space $X$, then $\rho(A)\neq\varnothing$ by \cite[Corollary 4.5, p.~307]{choe1985}, 
more precisely there is $a\geq 0$ such that $\{\lambda\in\C\;|\;\re(\lambda)>a\}\subseteq\rho(A)$. 
In general, it might happen in contrast to the situation on Banach spaces 
(see \cite[Chap.~IV, 1.3 Proposition (i), p.~240]{engel_nagel2000}) that $\rho(A)$ 
is not an open subset of $\C$ even if $(A,D(A))$ generates a strongly continuous equicontinuous semigroup 
on a Fr\'echet space (see \cite[Remark 3.5 (vii), p.~265--266]{albanese2013}). On the other hand, 
$\rho^{\ast}(A)$ is an open set by definition for any linear operator $(A,D(A))$ on a Hausdorff locally convex space 
$X$, and if $\rho^{\ast}(A)\neq \varnothing$ and $X$ is sequentially complete, 
then $R(\cd,A)\colon\rho^{\ast}(A)\to\cL_{\ob}(X)$ is holomorphic by \cite[Proposition 3.4 (i), p.~260]{albanese2013}. 
In \cite[Remark 3.5 (vi), p.~264--265]{albanese2013} an example of an operator $A\in\cL(X)$ on a Fr\'echet space $X$ 
is given with a strict inclusion $\overline{\sigma(A)}\subset \sigma^{\ast}(A)$ (cf.~\cite[p.~269]{albanese2016a}). 
Whereas, if $(A,D(A))$ is a linear operator on a Banach space $X$ such that $\rho(A)\neq\varnothing$, 
then $\sigma^{\ast}(A)=\sigma(A)$ by \cite[Remark 3.5 (iii), p.~262]{albanese2013}.

\prettyref{defn:spectrum} is not the only way to generalise the notions of the resolvent (set) and the spectrum 
to the locally convex setting, see for instance \cite[Definition 3.1, p.~804--805]{wegner2016} and the discussion 
of the different types of definitions and relations there. 

\begin{rem}\label{rem:alg_resolvent}
Let $X$ be a linear space and $(A,D(A))$ a linear operator on $X$ and define the 
\emph{algebraic resolvent set} of $A$ by
\[
\rho_{\alg}(A)\coloneqq \{\lambda\in\C\;|\;\lambda-A\text{ is bijective}\}
\]
and the \emph{algebraic spectrum} of $A$ by $\sigma_{\alg}(A)\coloneqq\C\setminus \rho_{\alg}(A)$.
If $X$ is a Hausdorff locally convex space, $(A,D(A))$ is closed and 
\begin{enumerate}[label=\upshape(\roman*), leftmargin=*, widest=iii]
\item \label{it:cl_gr_1} $X$ is ultrabornological and webbed, or
\item \label{it:cl_gr_2} $X$ is barrelled and $B_{r}$-complete, or
\item \label{it:cl_gr_3} $X$ is a Mackey $L_r$-space such that $X'$ is weakly sequentially complete, or
\item \label{it:cl_gr_4} $X$ is a semireflexive Mackey gDF space, or
\item \label{it:cl_gr_5} $X$ is a semi-Montel space and the topology on $X$ coincides with a mixed topology 
$\gamma\coloneqq\gamma(\|\cd\|,\tau)$ for some Saks space $(X,\|\cd\|,\tau)$,
\end{enumerate}
then $\rho(A)=\rho_{\alg}(A)$ and  $\sigma(A)=\sigma_{\alg}(A)$. 
Indeed, we only need to prove that $\rho_{\alg}(A)\subseteq\rho(A)$ which 
follows in the listed cases from closed graph theorems. Let $\lambda\in\rho_{\alg}(A)$. 
Then $(\lambda-A)^{-1}$ is closed by \cite[p.~258]{albanese2013} and the statement follows 
in case \ref{it:cl_gr_1} from \cite[Closed graph theorem 24.31, p.~289]{meisevogt1997},
in case \ref{it:cl_gr_2} from \cite[11.1.7 Theorem (c), p.~221]{jarchow1981}, 
in case \ref{it:cl_gr_3} from \cite[Theorem 1, p.~390]{qiu1985} (and its correction 
\cite[Proposition 3.1, p.~17]{boos1993}), 
in case \ref{it:cl_gr_4} from \cite[Theorem 1 (vii), p.~398]{mcintosh1969},
\cite[12.4.2 Theorem, p.~258]{jarchow1981} and the fact that semireflexive spaces are quasi-complete, 
and in case \ref{it:cl_gr_5} from \cite[I.4.32 Proposition, p.~60]{cooper1978} 
and the fact that $(X,\gamma)$ is complete by \cite[I.1.13, I.1.14 Propositions, p.~11]{cooper1978}.
\end{rem}

In the special case of \prettyref{rem:alg_resolvent} \ref{it:cl_gr_1} that $X$ is a Fr\'echet space
this is already observed in \cite[Remark 3.1 (ii), p.~259]{albanese2013}.

\begin{defn}
Let $X$ be a Hausdorff locally convex space and $(A,D(A))$ a linear operator on $X$. 
Then the \emph{point spectrum} of $A$ is defined by 
\begin{flalign*}
&\sigma_{\p}(A)\coloneqq\{\lambda\in\C\;|\;\lambda-A\text{ is not injective}\},&
\end{flalign*}
the \emph{approximate point spectrum} by 
\begin{flalign*}
&\sigma_{\ap}(A)\coloneqq\{\lambda\in\C\;|\;\exists\text{ a net }(x_i)_{i\in I}\subseteq D(A) \text{ not converging to }0:\;
\lim_{i\in I}(A-\lambda )x_i=0\},&
\end{flalign*}
the \emph{sequential approximate point spectrum} by 
\begin{flalign*}
&\sigma_{\ap}^{\seq}(A)\coloneqq\{\lambda\in\C\;|\;\exists\text{ a seq.}~(x_i)_{i\in \N}\subseteq D(A) \text{ not converging to }0:\;
\lim_{i\to\infty }(A-\lambda )x_i=0\},&
\end{flalign*}
the \emph{approximate spectrum} by 
\begin{flalign*}
&\sigma_{\apr}(A)\coloneqq\{\lambda\in\C\;|\;\lambda-A\text{ is not injective or }\ran(\lambda-A)
\text{ is not closed in }X\},&
\end{flalign*}
the \emph{residual spectrum}\footnote{One should be aware that there are different definitions of the residual spectrum 
in the literature. For instance, in \cite[VII.5.1 Exercises, p.~580]{dunford1958} the residual spectrum of a continuous linear operator $T\in\cL(X)$ is defined 
as the set $\{\lambda\in\C\;|\;\lambda-T\text{ is injective and }\ran(\lambda-T)\text{ is not dense in }X\}$ whereas what we call the residual spectrum is often named the compression spectrum (see e.g.~\cite[p.~28]{appell2004}). 
However, we stick here to the established notion of the residual spectrum from semigroup theory.} by 
\begin{flalign*}
&\sigma_{\res}(A)\coloneqq\{\lambda\in\C\;|\;\ran(\lambda-A)\text{ is not dense in }X\},&
\end{flalign*}
and the \emph{topological spectrum} by 
\begin{flalign*}
&\sigma_{\tp}(A)\coloneqq\{\lambda\in\C\;|\;\lambda-A\text{ is bijective and }(\lambda-A)^{-1}\notin\cL(X)\}
=\sigma(A)\setminus\sigma_{\alg}(A).&
\end{flalign*}
Further, the subset of all $\lambda\in\sigma_{\ap}(A)$ such that $(x_i)_{i\in I}$ 
can be chosen bounded is denoted by $\sigma_{\bap}(A)$ and called the 
\emph{bounded approximate point spectrum}. The \emph{bounded sequential approximate point spectrum} $\sigma_{\bap}^{\seq}(A)$ is defined analogously.
\end{defn}

In the case that $X$ is a Banach space and $(A,D(A))$ a closed linear operator 
the point spectrum, the approximate spectrum and the residual spectrum are given in 
\cite[Chap.~IV, 1.6, 1.8, 1.11 Definitions, p.~241--243]{engel_nagel2000}. Moreover, in this case it holds that 
\[
\sigma_{\ap}(A)=\sigma_{\ap}^{\seq}(A)=\sigma_{\bap}(A)=\sigma_{\bap}^{\seq}(A)=\sigma_{\apr}(A)
\]
by e.g.~\prettyref{prop:seq_ap_bounded_ap}.

\begin{rem}\label{rem:point_spec_included}
Let $X$ be a Hausdorff locally convex space, $(A,D(A))$ a linear operator on $X$ and $\lambda\in\C$.
\begin{enumerate}[label=\upshape(\alph*), leftmargin=*]
\item \label{it:p_spec_included_1} Then $\lambda\in\sigma_{\p}(A)$ if and only if $\lambda$ is an \emph{eigenvalue} of $A$, 
i.e.~there is $x\in D(A)$, $x\neq 0$, such that $(\lambda-A)x=0$. Such elements $x$ are called \emph{eigenvectors} 
of $A$ (corresponding to $\lambda\in\sigma_{\p}(A)$) and the space $\ker(\lambda-A)=\ker(A-\lambda)$ 
the \emph{eigenspace}. Further, the inclusions 
$\sigma_{\p}(A)\subseteq\sigma_{\bap}^{\seq}(A)\subseteq\sigma_{\bap}(A)$, 
$\sigma_{\ap}^{\seq}(A)\subseteq\sigma_{\ap}(A)$ and $\sigma_{\p}(A)\subseteq\sigma_{\apr}(A)$ hold.
\item Let $\lambda\in\sigma_{\ap}(A)$. Then $\lambda$ is called an \emph{approximate eigenvalue} of $A$ 
and a net $(x_i)_{i\in I}\subseteq D(A)$ not converging to $0$ with $\lim_{i\in I}(A-\lambda )x_i=0$ 
is called an \emph{approximate eigenvector} of $A$ (corresponding to $\lambda$). 
If $I=\N$, then $(x_i)_{i\in I}$ is called a \emph{sequential approximate eigenvector}. 
\end{enumerate}
\end{rem}

In order to clarify the relation between the approximate spectrum and the (sequential) approximate point spectrum, 
we recall the following observations from \cite[Lemma 4.1, p.~268--269]{albanese2013}. 
Let $(X,\tau)$ be a Hausdorff locally convex space and $(A,D(A))$ a linear operator on $X$. Then the system 
of seminorms $(q^{A})_{q\in \Gamma_X }$ defined by
\[
q^{A}(x)\coloneqq q(x)+q(Ax),\quad x\in D(A),\,q\in\Gamma_X ,
\]
defines a Hausdorff locally convex topology on $D(A)$, which is called the \emph{graph topology} and denoted by $\tau^{A}$, and it does not depend on 
the choice of the fundamental system of seminorms $\Gamma_X $ which induces $\tau$.  
If $X$ is (quasi-, sequentially) complete and $(A,D(A))$ closed, then $(D(A),\tau^{A})$ is 
(quasi-, sequentially) complete. 
Moreover, if $(A,D(A))$ is closed and $\lambda\in\C$ is such that $\lambda-A$ is injective, then 
it is easily seen that $(\lambda-A)^{-1}\colon \ran(\lambda-A)\to (D(A),\tau)$ is closed 
(cf.~\cite[p.~258]{albanese2013}), and this implies that $(\lambda-A)^{-1}\colon \ran(\lambda-A)\to (D(A),\tau^{A})$ 
is also closed since $\tau$ is coarser than $\tau^{A}$ on $D(A)$. 

We also recall that a topological space $X$ is called \emph{sequential} by \cite[p.~53]{engelking1989} if every sequentially 
closed subset of $X$ is already closed. Examples of sequential spaces are first countable spaces, in particular metrisable spaces, 
by \cite[1.6.14 Theorem, p.~53]{engelking1989} and Montel DF-spaces by \cite[Theorem 4.6, p.~397]{kakolsaxon2002}.

\begin{prop}\label{prop:approx_spec}
Let $X$ be a Hausdorff locally convex space and $(A,D(A))$ a closed linear operator on $X$. 
Then the following assertions hold.
\begin{enumerate}[label=\upshape(\alph*), leftmargin=*]
\item \label{it:ap_spec_1} If $X$ is complete, then $\sigma_{\apr}(A)\subseteq\sigma_{\ap}(A)$. 
\item \label{it:ap_spec_2} If $X$ is sequentially complete and sequential, then $\sigma_{\apr}(A)\subseteq\sigma_{\ap}^{\seq}(A)$. 
\item \label{it:ap_spec_3} If for all $\lambda\in\sigma_{\ap}(A)\setminus\sigma_{\p}(A)$ such that $\ran(\lambda-A)$ is closed in $X$, 
the closed linear map $(\lambda-A)^{-1}\colon \ran(\lambda-A)\to (D(A),\tau^A )$ is continuous, 
then $\sigma_{\ap}(A)\subseteq \sigma_{\apr}(A)$.
\item \label{it:ap_spec_4} If $X$ is a Fr\'echet space, then $\sigma_{\ap}(A)=\sigma_{\ap}^{\seq}(A)=\sigma_{\apr}(A)$.
\end{enumerate}
\end{prop}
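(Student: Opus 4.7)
The overall strategy for parts \ref{it:ap_spec_1}--\ref{it:ap_spec_3} is a contrapositive argument: the absence of $\lambda$ from the (sequential) approximate point spectrum is reformulated as (sequential) continuity at $0$ of the algebraic inverse $S\coloneqq(\lambda-A)^{-1}\colon \ran(\lambda-A)\to X$, and then completeness (or sequential completeness together with C-sequentiality) is used, via closedness of $A$, to force $\ran(\lambda-A)$ to be closed, contradicting membership in $\sigma_{\apr}(A)$. Part \ref{it:ap_spec_4} will then combine \ref{it:ap_spec_1}, \ref{it:ap_spec_2} and \ref{it:ap_spec_3} with the classical open mapping theorem for Fr\'echet spaces.

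For \ref{it:ap_spec_1} I would fix $\lambda\in\sigma_{\apr}(A)\setminus\sigma_{\ap}(A)$. Since $\sigma_{\p}(A)\subseteq\sigma_{\ap}(A)$ by \prettyref{rem:point_spec_included}, $\lambda-A$ is injective and $M\coloneqq\ran(\lambda-A)$ must be non-closed. The hypothesis $\lambda\notin\sigma_{\ap}(A)$ says that every net $(x_{i})\subseteq D(A)$ with $(A-\lambda)x_{i}\to 0$ converges to $0$, which by linearity means $S\colon M\to X$ is continuous. Choose $y\in\overline{M}\setminus M$ and a net $y_{i}=(\lambda-A)x_{i}$ in $M$ with $y_{i}\to y$. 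Continuity of $S$ turns $(x_{i})=(Sy_{i})$ into a Cauchy net in $X$, which converges to some $x$ by completeness; closedness of $A$ then gives $x\in D(A)$ with $(\lambda-A)x=y$, contradicting $y\notin M$.

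For \ref{it:ap_spec_2} I would run the same scheme with sequences. The hypothesis $\lambda\notin\sigma_{\ap}^{\seq}(A)$ yields sequential continuity of $S$, and the Cauchy property of $(x_{n})$ attached to a candidate sequence $y_{n}=(\lambda-A)x_{n}\to y$ is extracted by a differencing trick: if $(x_{n})$ were not Cauchy, one could choose subsequences with $p(x_{n_{k}}-x_{m_{k}})\geq\varepsilon$ for some $p\in\Gamma_{X}$, whereas $(A-\lambda)(x_{n_{k}}-x_{m_{k}})=-(y_{n_{k}}-y_{m_{k}})\to 0$ would force $x_{n_{k}}-x_{m_{k}}\to 0$ by $\lambda\notin\sigma_{\ap}^{\seq}(A)$, a contradiction. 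Sequential completeness and closedness of $A$ then place $y$ in $M$ as in \ref{it:ap_spec_1}. The main obstacle I expect is the step from $M$ being non-closed to the existence of an actual \emph{sequence} $(y_{n})\subseteq M$ converging to a point outside $M$; this is where C-sequentiality enters, since it implies, via Snipes's characterisation \cite[p.~273]{snipes1973}, that a convex linear subspace of $X$ is closed whenever it is sequentially closed.

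For \ref{it:ap_spec_3} I would take $\lambda\in\sigma_{\ap}(A)$ and dispose of the cases $\lambda\in\sigma_{\p}(A)\subseteq\sigma_{\apr}(A)$ and $\ran(\lambda-A)$ non-closed at once; in the remaining case the given continuity of $(\lambda-A)^{-1}\colon\ran(\lambda-A)\to(D(A),\tau^{A})$ applied to an approximate eigenvector $(x_{i})$, with $y_{i}\coloneqq(\lambda-A)x_{i}\to 0$ in $X$, forces $q^{A}(x_{i})\to 0$ for every $q\in\Gamma_{X}$; since $q(x_{i})\leq q^{A}(x_{i})$ this gives $x_{i}\to 0$ in $\tau_{X}$, contrary to the choice of $(x_{i})$. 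Finally for \ref{it:ap_spec_4}, a Fr\'echet space is complete and metrizable, hence sequential and in particular C-sequential, so \ref{it:ap_spec_1}, \ref{it:ap_spec_2} together with \prettyref{rem:point_spec_included} already yield $\sigma_{\apr}(A)\subseteq\sigma_{\ap}^{\seq}(A)\subseteq\sigma_{\ap}(A)$. To obtain the reverse inclusion from \ref{it:ap_spec_3}, I would verify its hypothesis by noting that $(D(A),\tau^{A})$ is Fr\'echet by the discussion preceding the proposition, $\ran(\lambda-A)$ is a closed subspace of the Fr\'echet space $X$ hence itself Fr\'echet, and $\lambda-A\colon(D(A),\tau^{A})\to\ran(\lambda-A)$ is a continuous linear bijection; the Banach--Schauder open mapping theorem then supplies the continuity of its inverse.
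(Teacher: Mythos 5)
Your proposal is correct and follows essentially the same contrapositive strategy as the paper's own proof: in each of \ref{it:ap_spec_1}--\ref{it:ap_spec_3} you reformulate $\lambda\notin\sigma_{\ap}(A)$ (resp.\ $\lambda\notin\sigma_{\ap}^{\seq}(A)$, resp.\ $\lambda\notin\sigma_{\apr}(A)$) as (sequential) continuity of $(\lambda-A)^{-1}$, use (sequential) completeness and the closedness of $A$ to force $\ran(\lambda-A)$ to be (sequentially) closed, invoke C-sequentiality and convexity exactly as the paper does, and settle \ref{it:ap_spec_4} by combining the earlier parts with \prettyref{rem:point_spec_included}. The only cosmetic deviations are that in \ref{it:ap_spec_2} you obtain the Cauchy property of $(x_{n})$ by a direct differencing argument where the paper cites the fact that sequentially continuous linear maps preserve Cauchy sequences, and in \ref{it:ap_spec_4} you apply the open mapping theorem to the continuous bijection $\lambda-A\colon(D(A),\tau^{A})\to\ran(\lambda-A)$ where the paper applies the closed graph theorem to $(\lambda-A)^{-1}$ --- both are equivalent uses of the Banach--Schauder circle of ideas for Fr\'echet spaces.
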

\begin{proof}
\ref{it:ap_spec_1} Let $\lambda\in\sigma_{\apr}(A)$. Due to the inclusions $\sigma_{\p}(A)\subseteq\sigma_{\ap}(A)$ 
and $\sigma_{\p}(A)\subseteq\sigma_{\apr}(A)$ we only need to consider the case that $\lambda-A$ is injective. 
Then the map $(\lambda-A)^{-1}\colon\ran (\lambda-A)\to X$ is well-defined and linear. 
Suppose $\lambda\notin\sigma_{\ap}(A)$ and let $(x_i)_{i\in I}$ be a net in $D(A)$ such that 
$\lim_{i\in I}(\lambda-A)x_i =0$. Then $\lim_{i\in I}x_i =0$ since $\lambda\notin\sigma_{\ap}(A)$. 
This implies that $(\lambda-A)^{-1}\colon\ran (\lambda-A)\to X$ is continuous. 
Now, let $(z_i)_{i\in I}$ be a net in $D(A)$ and $y\in X$ such that 
$\lim_{i\in I}(\lambda-A)z_i =y$. By the continuity of $(\lambda-A)^{-1}\colon\ran (\lambda-A)\to X$ 
we get that for all $q\in\Gamma_X$ there are $p\in\Gamma_X$ and $C\geq 0$ such that
\[
q(z_i-z_j)=q((\lambda-A)^{-1}((\lambda-A)z_i-(\lambda-A)z_j))\leq C p((\lambda-A)z_i-(\lambda-A)z_j)
\]
for all $i,j \in I$. Since $\lim_{i\in I}(\lambda-A)z_i =y$, this estimate implies that $(z_i)_{i\in I}$ is a 
Cauchy net in $X$, which converges to some $z\in X$ by the completeness of $X$. Hence the closedness of $A$ 
yields that $z\in D(A)$ and $y=(\lambda-A)z\in\ran (\lambda-A)$. Thus $\ran (\lambda-A)$ is closed, which is 
a contradiction to $\lambda\in\sigma_{\apr}(A)$.

\ref{it:ap_spec_2} Let $\lambda\in\sigma_{\apr}(A)$. Due to the inclusions 
$\sigma_{\p}(A)\subseteq\sigma_{\ap}^{\seq}(A)$ 
and $\sigma_{\p}(A)\subseteq\sigma_{\apr}(A)$ we only need to consider the case that $\lambda-A$ is injective. 
Suppose $\lambda\notin\sigma_{\ap}^{\seq}(A)$. Looking at the proof of part \ref{it:ap_spec_1}, it follows 
that $(\lambda-A)^{-1}\colon\ran (\lambda-A)\to X$ is sequentially continuous. 
Now, let $(z_i)_{i\in \N}$ be a sequence in $D(A)$ and $y\in X$ such that 
$\lim_{i\to\infty}(\lambda-A)z_i =y$. Then $((\lambda-A)z_i)_{i\in\N}$ is a Cauchy sequence in $\ran (\lambda-A)$ 
and 
\[
z_i=(\lambda-A)^{-1}((\lambda-A)z_i)
\]
for all $i\in\N$. Since sequentially continuous linear operators map Cauchy sequences to Cauchy sequences 
by \cite[Proposition 3.2, p.~1135]{federico2020}, we get that $(z_i)_{i\in\N}$ is a 
Cauchy sequence in $X$, which converges to some $z\in X$ by the sequential completeness of $X$. 
Hence the closedness of $A$ yields that $z\in D(A)$ and $y=(\lambda-A)z\in\ran (\lambda-A)$. 
Thus $\ran (\lambda-A)$ is sequentially closed. Due to $X$ being sequential this implies that the 
linear subspace $\ran (\lambda-A)$ is closed, which is a contradiction to $\lambda\in\sigma_{\apr}(A)$. 

\ref{it:ap_spec_3} Let $\lambda\in\sigma_{\ap}(A)$. Again, we only need to consider the case that 
$\lambda-A$ is injective. Suppose that $\lambda\notin\sigma_{\apr}(A)$. 
Then $(\lambda-A)^{-1}\colon \ran(\lambda-A)\to (D(A),\tau^{A} )$ is continuous by our assumption. 
Now, let $(x_i)_{i\in I}$ be a net in $D(A)$ such that $\lim_{i\in I}(\lambda-A)x_i =0$. 
By the continuity of $(\lambda-A)^{-1}\colon \ran(\lambda-A)\to (D(A),\tau^{A} )$ we obtain that
$\tau^{A}$-$\lim_{i\in I}x_i =0$ and so $\lim_{i\in I}x_i =0$ in the topology of $X$. However, this 
is a contradiction to $\lambda\in\sigma_{\ap}(A)$.

\ref{it:ap_spec_4} Let $X$ be a Fr\'echet space. Then $X$ is complete and sequential because every 
metrisable space is sequential.
If $\lambda\in\C$ is such that $\ran(\lambda-A)$ is closed, 
then $\ran(\lambda-A)$ is also a Fr\'echet space because closed subspaces of Fr\'echet spaces are Fr\'echet spaces. 
Moreover, $(D(A),\tau^A )$ is complete and metrizable, so a Fr\'echet space, 
by our observations above \prettyref{prop:approx_spec} since $\Gamma_X$ and thus $(q^{A})_{q\in \Gamma_X }$ 
can be chosen as a countable system of seminorms. 
Therefore the closed linear map $(\lambda-A)^{-1}\colon \ran(\lambda-A)\to (D(A),\tau^A )$ is continuous for any 
$\lambda\in\C\setminus\sigma_{\p}(A)$ such that $\ran(\lambda-A)$ is closed 
by \cite[Closed graph theorem 24.31, p.~289]{meisevogt1997}. 
We conclude our statement from parts \ref{it:ap_spec_2}, \ref{it:ap_spec_3} and the inclusion 
$\sigma_{\ap}^{\seq}(A)\subseteq\sigma_{\ap}(A)$. 
\end{proof}

\prettyref{prop:approx_spec} \ref{it:ap_spec_4} generalises \cite[Chap.~IV, 1.9 Lemma, p.~242]{engel_nagel2000} from 
Banach spaces $X$ to Fr\'echet spaces. 

\begin{rem}\label{rem:trap}
Let $X$ be a sequentially complete Hausdorff locally convex space. 
Looking at the proof of \prettyref{prop:approx_spec} \ref{it:ap_spec_2}, 
we note that it still holds if $(A,D(A))$ is a sequentially closed linear operator on $X$, and $X$ is such that 
every sequentially closed linear subspace of $X$ is already closed.  
\end{rem}

We know that metrisable Hausdorff locally convex spaces and Montel DF-spaces are sequential. 
We observe that the first ones belong to the class of bornological spaces, the latter ones to the class of barrelled spaces. 
However, there are sequential Hausdorff locally convex spaces outside of these classes, namely among the Saks spaces. 
Indeed, if $(X,\|\cd\|,\tau)$ is a Saks space and $\tau\neq\tau_{\|\cd\|}$, 
then $(X,\gamma)$ is neither bornological nor barrelled by \cite[I.1.15 Proposition, p.~12]{cooper1978}. 

\begin{prop}
Let $(X,\|\cd\|,\tau)$ be a Saks space such that $\tau$ is metrisable on $B_{\|\cd\|}\coloneq\{x\in X\;|\;\|x\|\leq 1\}$ and $B_{\|\cd\|}$ is $\tau$-compact. Then $(X,\gamma)$ is sequential. 
\end{prop} 
\begin{proof}
Let $M\subseteq X$ be sequentially $\gamma$-closed. For $k\in\N$ let $B_{k}\coloneqq k B_{\|\cdot\|}$ and denote by $\tau_{k}$ 
the topology $\tau$ restricted to $B_{k}$. Since $\tau$ is metrisable on the $\tau$-compact set $B_{\|\cd\|}$, we have that 
$\tau_{k}$ is metrisable and $B_{k}$ $\tau$-closed. 
Let $(x_{n})_{n\in\N}$ be a sequence in $M\cap B_{k}$ that $\tau$-converges to some $x\in X$. 
In particular, this implies that $x\in B_{k}$ because $B_{k}$ is $\tau$-closed. 
By \cite[I.1.10 Proposition, p.~9]{cooper1978} we also have that $(x_n)_{n\in\N}$ $\gamma$-converges to $x$ as this sequence is 
$\tau$-convergent to $x$ and $\|\cd\|$-bounded. Thus the sequential $\gamma$-closedness of $M$ yields that $x\in M$. 
Therefore $x\in M\cap B_{k}$ and so $M\cap B_{k}$ is sequentially $\tau_{k}$-closed. As $\tau_{k}$ is metrisable, 
this means that $M\cap B_{k}$ is $\tau_{k}$-closed. Due to \cite[I.4.2 Corollary, p.~38]{cooper1978} with 
$\mathcal{B}\coloneqq\{B_{k}\;|\;k\in\N\}$ we obtain that $M$ is $\gamma$-closed because $B_{\|\cd\|}$ is $\tau$-compact.
\end{proof}

A concrete example of such a Saks space is given in \prettyref{ex:periodic_sg}. More examples can be found in 
\cite[Example 3.11 (b)--(d), p.~10, Remark 3.19, p.~14]{kruse_schwenninger2022} and 
\cite[3.5 Corollary (c), p.~9, \& Section 4]{kruse2024a}. 
We also make the following observation that the bounded (sequential) approximate point spectrum coincides 
with the (sequential) approximate point spectrum on Banach spaces.

\begin{prop}\label{prop:seq_ap_bounded_ap}
Let $(X,\|\cd\|)$ be a Banach space and $(A,D(A))$ a closed linear operator on $X$. Then
\[
\sigma_{\ap}(A)=\sigma_{\ap}^{\seq}(A)=\sigma_{\bap}(A)=\sigma_{\bap}^{\seq}(A)=\sigma_{\apr}(A).
\]
\end{prop}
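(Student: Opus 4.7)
The plan is to combine the Fréchet-space equality already established in \prettyref{prop:approx_spec} \ref{it:ap_spec_4} with a simple normalisation trick that shows approximate eigenvectors can always be taken bounded (in fact of norm at most $1$) in a normed setting. Since every Banach space is a Fréchet space, \prettyref{prop:approx_spec} \ref{it:ap_spec_4} gives us
\[
\sigma_{\ap}(A)=\sigma_{\ap}^{\seq}(A)=\sigma_{\apr}(A)
\]
for free. The inclusions $\sigma_{\bap}^{\seq}(A)\subseteq\sigma_{\bap}(A)\subseteq\sigma_{\ap}(A)$ and $\sigma_{\bap}^{\seq}(A)\subseteq\sigma_{\ap}^{\seq}(A)$ are immediate from the definitions and \prettyref{rem:point_spec_included} \ref{it:p_spec_included_1}. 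Hence it suffices to prove the single inclusion $\sigma_{\ap}^{\seq}(A)\subseteq\sigma_{\bap}^{\seq}(A)$.

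The key step is the following normalisation. Fix $\lambda\in\sigma_{\ap}^{\seq}(A)$ and choose $(x_{n})_{n\in\N}\subseteq D(A)$ with $(A-\lambda)x_{n}\to 0$ and $x_{n}\not\to 0$. The hypothesis $x_{n}\not\to 0$ yields $\delta\in(0,1]$ and a subsequence, which I relabel as $(x_{n})$, such that $\|x_{n}\|\geq\delta$ for all $n\in\N$. Set $\alpha_{n}\coloneqq\max(\|x_{n}\|,1)$ and $y_{n}\coloneqq x_{n}/\alpha_{n}\in D(A)$. Then $\|y_{n}\|\leq 1$, so $(y_{n})$ is bounded, and because $\alpha_{n}\geq 1$ one has
\[
\|(A-\lambda)y_{n}\|=\frac{1}{\alpha_{n}}\|(A-\lambda)x_{n}\|\leq \|(A-\lambda)x_{n}\|\longrightarrow 0.
\]
A case distinction on whether $\|x_{n}\|\leq 1$ or $\|x_{n}\|>1$ shows $\|y_{n}\|\geq \delta$ in both cases (using $\delta\leq 1$), so $y_{n}\not\to 0$ and consequently $\lambda\in\sigma_{\bap}^{\seq}(A)$.

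Chaining the inclusions
\[
\sigma_{\apr}(A)=\sigma_{\ap}(A)\supseteq\sigma_{\ap}^{\seq}(A)\subseteq\sigma_{\bap}^{\seq}(A)\subseteq\sigma_{\bap}(A)\subseteq\sigma_{\ap}(A)
\]
then collapses everything into one set, finishing the proof. I do not expect any real obstacle here: the whole argument rests on the Fréchet-space result of \prettyref{prop:approx_spec} \ref{it:ap_spec_4} and the elementary rescaling $x_{n}\mapsto x_{n}/\max(\|x_{n}\|,1)$, which is specific to the normed (hence Banach) setting and is precisely what is unavailable in general locally convex spaces.
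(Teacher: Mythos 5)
Your proposal is correct and follows essentially the same route as the paper: reduce everything via \prettyref{prop:approx_spec} \ref{it:ap_spec_4} and the trivial inclusions to the single inclusion $\sigma_{\ap}^{\seq}(A)\subseteq\sigma_{\bap}^{\seq}(A)$, then pass to a subsequence bounded below in norm and rescale. The only cosmetic difference is that you normalise by $\max(\|x_n\|,1)$ while the paper divides by $\|x_n\|$ outright; both work equally well.
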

\begin{proof}
Due to \prettyref{prop:approx_spec} and the inclusion 
$\sigma_{\bap}^{\seq}(A)\subseteq \sigma_{\bap}(A)\subseteq \sigma_{\ap}(A)$ we only need to prove the inclusion 
$\sigma_{\ap}^{\seq}(A)\subseteq\sigma_{\bap}^{\seq}(A)$. Let $\lambda\in\sigma_{\ap}^{\seq}(A)$. 
Then there exists a sequence $(x_i)_{i\in \N}$ in $D(A)$ which does not converge to $0$ and fulfils
$\lim_{i\to\infty}(A-\lambda )x_i=0$. Since $(x_i)_{i\in\N}$ does not converge to $0$, 
there is $\varepsilon >0$ such that for all $j\in\N$ there is $i\in\N$, $i\geq j$, 
with $\|x_i\|\geq\varepsilon$. By passing to a subsequence, which we still denote by $(x_i)_{i\in\N}$, 
we may assume that $\|x_i\|\geq\varepsilon$ for all $i\in\N$. 
Now, we set $y_{i}\coloneqq\frac{x_i}{\|x_i\|}$ for $i\in\N$. Then $(y_i)_{i\in\N}$ is $\|\cd\|$-bounded 
and does not converge to $0$ since $\|y_i\|=1$ for all $i\in\N$, and 
\[
\|(\lambda-A)y_i\|=\frac{1}{\|x_{i}\|}\|(\lambda-A)x_i\|\leq \frac{1}{\varepsilon}\|(\lambda-A)x_i\|
\] 
for all $i\in\N$, which implies that $\lim_{i\to\infty}(\lambda-A)y_i=0$. 
We deduce that $\lambda\in\sigma_{\bap}^{\seq}$.
\end{proof}

We are now able to give some decompositions of the spectrum.

\begin{prop}\label{prop:decomp_spec}
Let $X$ be a Hausdorff locally convex space and $(A,D(A))$ a linear operator on $X$. 
Then the following assertions hold.
\begin{enumerate}[label=\upshape(\alph*), leftmargin=*]
\item \label{it:decomp_spec_1} $\sigma_{\alg}(A)=\sigma_{\apr}(A)\cup\sigma_{\res}(A)$, 
$\sigma(A)=\sigma_{\alg}(A)\cup\sigma_{\tp}(A)$ and $\sigma_{\ap}(A)\subseteq\sigma(A)$.
\item \label{it:decomp_spec_2} If $X$ is complete and $(A,D(A))$ closed, then
$\sigma(A)=\sigma_{\ap}(A)\cup\sigma_{\res}(A)\cup\sigma_{\tp}(A)$. 
\item \label{it:decomp_spec_3} If $X$ is sequentially complete and such that 
every sequentially closed linear subspace of $X$ is already closed, and 
$(A,D(A))$ sequentially closed, then $\sigma(A)=\sigma_{\ap}^{\seq}(A)\cup\sigma_{\res}(A)\cup\sigma_{\tp}(A)$. 
\end{enumerate}
\end{prop}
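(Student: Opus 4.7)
The plan is to deduce part \ref{it:decomp_spec_1} directly from the definitions and then bootstrap parts \ref{it:decomp_spec_2} and \ref{it:decomp_spec_3} from \ref{it:decomp_spec_1} by invoking \prettyref{prop:approx_spec}.

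For part \ref{it:decomp_spec_1}, I would first unravel the first equality: $\lambda\in\sigma_{\alg}(A)$ means that $\lambda-A$ fails to be bijective, i.e.~either it is not injective (in which case $\lambda\in\sigma_{\apr}(A)$) or it is not surjective. In the latter case $\ran(\lambda-A)$ is a proper subspace of $X$, so it is either not dense (placing $\lambda$ in $\sigma_{\res}(A)$) or it is dense but strictly smaller than $X$, hence not closed (placing $\lambda$ in $\sigma_{\apr}(A)$). The reverse inclusion is immediate because any $\lambda$ in $\sigma_{\apr}(A)\cup\sigma_{\res}(A)$ renders $\lambda-A$ either non-injective or non-surjective, hence non-bijective. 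The identity $\sigma(A)=\sigma_{\alg}(A)\cup\sigma_{\tp}(A)$ is a bookkeeping tautology given the definition $\sigma_{\tp}(A)=\sigma(A)\setminus\sigma_{\alg}(A)$, together with the inclusion $\rho(A)\subseteq\rho_{\alg}(A)$ that guarantees $\sigma_{\alg}(A)\subseteq\sigma(A)$. For $\sigma_{\ap}(A)\subseteq\sigma(A)$, I argue by contraposition: if $\lambda\in\rho(A)$, then applying the continuous operator $R(\lambda,A)$ to a net $(x_i)_{i\in I}\subseteq D(A)$ with $\lim_i(A-\lambda)x_i=0$ forces $x_i\to 0$, so such a net cannot be an approximate eigenvector.

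For part \ref{it:decomp_spec_2}, combining the two identities from \ref{it:decomp_spec_1} gives $\sigma(A)=\sigma_{\apr}(A)\cup\sigma_{\res}(A)\cup\sigma_{\tp}(A)$. Under the hypotheses (complete $X$, closed $A$), \prettyref{prop:approx_spec} \ref{it:ap_spec_1} yields $\sigma_{\apr}(A)\subseteq\sigma_{\ap}(A)$, while \ref{it:decomp_spec_1} supplies the reverse containment $\sigma_{\ap}(A)\subseteq\sigma(A)$. Together with the trivial inclusions $\sigma_{\res}(A)\subseteq\sigma_{\alg}(A)\subseteq\sigma(A)$ and $\sigma_{\tp}(A)\subseteq\sigma(A)$, this upgrades the decomposition to the desired form $\sigma(A)=\sigma_{\ap}(A)\cup\sigma_{\res}(A)\cup\sigma_{\tp}(A)$.

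Part \ref{it:decomp_spec_3} runs along identical lines, with \prettyref{prop:approx_spec} \ref{it:ap_spec_2} used in place of \ref{it:ap_spec_1}. The only subtlety -- and what I would flag as the main obstacle -- is that \prettyref{prop:approx_spec} \ref{it:ap_spec_2} is formally stated for \emph{closed} operators, whereas \ref{it:decomp_spec_3} only assumes sequential closedness. However, an inspection of the proof of \ref{it:ap_spec_2} shows that closedness is invoked only on a Cauchy \emph{sequence} $(z_i)_{i\in\N}\subseteq D(A)$, so sequential closedness is in fact sufficient; I would explicitly record this refinement before concluding. Then the chain $\sigma_{\apr}(A)\subseteq\sigma_{\ap}^{\seq}(A)\subseteq\sigma_{\ap}(A)\subseteq\sigma(A)$ closes the argument exactly as in \ref{it:decomp_spec_2}.
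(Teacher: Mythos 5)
Your proposal is correct and follows essentially the same route as the paper: the same case analysis (non-injective versus non-surjective with dense/closed range) for the first identity, the same observation that an unbounded inverse puts $\lambda$ in $\sigma_{\tp}(A)$ (you phrase it contrapositively, the paper directly), and the same bootstrapping of parts (b) and (c) from part (a) via \prettyref{prop:approx_spec}. Your explicit remark that \prettyref{prop:approx_spec} \ref{it:ap_spec_2} only needs sequential closedness is precisely the point the paper handles by citing the comment following the proof of \prettyref{prop:approx_spec}.
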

\begin{proof}
\ref{it:decomp_spec_1} ``$\sigma_{\alg}(A)\subseteq\sigma_{\apr}(A)\cup\sigma_{\res}(A)$'' 
Let $\lambda\in\sigma_{\alg}(A)$. If $\lambda-A$ is not injective, then $\lambda\in\sigma_{\apr}(A)$. 
So, let $\lambda-A$ be injective but not surjective. If $\lambda\notin \sigma_{\apr}(A)$, then $\ran (\lambda-A)$ 
is closed in $X$. Suppose that $\lambda\notin\sigma_{\res}(A)$. 
Then $\ran (\lambda-A)$ is dense and closed in $X$, which implies that $\ran (\lambda-A)=X$ 
and so contradicts that $\lambda-A$ is not surjective. Hence we have $\lambda\in\sigma_{\res}(A)$.

``$\sigma_{\apr}(A)\cup\sigma_{\res}(A)\subseteq\sigma_{\alg}(A)$'' This inclusion is obvious since $\lambda-A$ for 
$\lambda\in\C$ is not surjective if $\ran (\lambda-A)$ is not closed or not dense in $X$. 

The identity $\sigma(A)=\sigma_{\alg}(A)\cup\sigma_{\tp}(A)$ follows by the definitions of the sets involved.

``$\sigma_{\ap}(A)\subseteq\sigma(A)$'' Let $\lambda\in\sigma_{\ap}(A)$. 
If $\lambda\in\sigma_{\alg}(A)$, then $\lambda\in \sigma(A)$. If $\lambda\notin\sigma_{\alg}(A)$, 
then $(\lambda-A)^{-1}\colon X\to X$ is well-defined and linear. Since $\lambda\in\sigma_{\ap}(A)$, 
there exists a net $(x_i)_{i\in I}$ in $D(A)$ which does not converge to $0$ and fulfils
$\lim_{i\in I}(A-\lambda )x_i=0$. Writing $y_i\coloneqq(A-\lambda )x_i \in X$, $i\in I$, 
this means that the net $(y_i)_{i\in I}$ converges to $0$ in $X$ but the net $((\lambda-A)^{-1}y_i)_{i\in I}$ 
does not converge to $0$ in $X$ as $(\lambda-A)^{-1}y_i=-x_i$ for all $i\in I$. 
Thus $(\lambda-A)^{-1}\notin\cL(X)$ and so $\lambda\in\sigma_{\tp}(A)$.

\ref{it:decomp_spec_2} This statement follows from part \ref{it:decomp_spec_1} and 
\prettyref{prop:approx_spec} \ref{it:ap_spec_1}.

\ref{it:decomp_spec_3} This statement follows from part \ref{it:decomp_spec_1}, 
\prettyref{prop:approx_spec} \ref{it:ap_spec_2} and \prettyref{rem:trap}.
\end{proof}

Next, we turn to spectral mapping theorems for resolvents where we slightly refine 
\cite[Theorem 1.1, p.~269]{albanese2016a}.

\begin{thm}\label{thm:spec_map_res}
Let $X$ be a Hausdorff locally convex space and $(A,D(A))$ a linear operator on $X$. 
Then the following assertions hold for every $\lambda\in\rho(A)$.
\begin{enumerate}[label=\upshape(\alph*), leftmargin=*]
\item \label{it:spec_map_res_1} $\sigma(R(\lambda,A))\setminus\{0\}=\{\frac{1}{\lambda-\mu}\;|\;\mu\in\sigma(A)\}$,
\item \label{it:spec_map_res_2} $\sigma_{\alg}(R(\lambda,A))\setminus\{0\}=\{\frac{1}{\lambda-\mu}\;|\;\mu\in\sigma_{\alg}(A)\}$,
\item \label{it:spec_map_res_3} $\sigma_{\p}(R(\lambda,A))\setminus\{0\}=\{\frac{1}{\lambda-\mu}\;|\;\mu\in\sigma_{\p}(A)\}$,
\item \label{it:spec_map_res_4} $\sigma_{\ap}(R(\lambda,A))\setminus\{0\}=\{\frac{1}{\lambda-\mu}\;|\;\mu\in\sigma_{\ap}(A)\}$,
\item \label{it:spec_map_res_4a} $\sigma_{\ap}^{\seq}(R(\lambda,A))\setminus\{0\}=\{\frac{1}{\lambda-\mu}\;|\;\mu\in\sigma_{\ap}^{\seq}(A)\}$,
\item \label{it:spec_map_res_4b} $\sigma_{\bap}(R(\lambda,A))\setminus\{0\}=\{\frac{1}{\lambda-\mu}\;|\;\mu\in\sigma_{\bap}(A)\}$,
\item \label{it:spec_map_res_4c} $\sigma_{\bap}^{\seq}(R(\lambda,A))\setminus\{0\}=\{\frac{1}{\lambda-\mu}\;|\;\mu\in\sigma_{\bap}^{\seq}(A)\}$,
\item \label{it:spec_map_res_5} $\sigma_{\apr}(R(\lambda,A))\setminus\{0\}=\{\frac{1}{\lambda-\mu}\;|\;\mu\in\sigma_{\apr}(A)\}$,
\item \label{it:spec_map_res_6} $\sigma_{\res}(R(\lambda,A))\setminus\{0\}=\{\frac{1}{\lambda-\mu}\;|\;\mu\in\sigma_{\res}(A)\}$,
\item \label{it:spec_map_res_7} $\sigma_{\tp}(R(\lambda,A))\setminus\{0\}=\{\frac{1}{\lambda-\mu}\;|\;\mu\in\sigma_{\tp}(A)\}$,
\item \label{it:spec_map_res_8} $\sigma^{\ast}(R(\lambda,A))\setminus\{0\}=\{\frac{1}{\lambda-\mu}\;|\;\mu\in\sigma^{\ast}(A)\}$.
\end{enumerate}
\end{thm}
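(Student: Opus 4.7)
The approach is to reduce all eleven identities to a single algebraic factorisation. For $\mu \in \C$ with $\mu \neq \lambda$ I first verify
\[
\frac{1}{\lambda - \mu}I - R(\lambda, A) = \frac{1}{\lambda - \mu}R(\lambda, A)(\mu - A) \quad \text{on } D(A),
\]
and
\[
\frac{1}{\lambda - \mu}I - R(\lambda, A) = \frac{1}{\lambda - \mu}(\mu - A)R(\lambda, A) \quad \text{on } X,
\]
both of which follow from $(\lambda - A)R(\lambda, A) = I$ on $X$ and $R(\lambda, A)(\lambda - A) = I$ on $D(A)$ by a direct calculation. Since $R(\lambda, A)$ is a continuous linear bijection from $X$ onto $D(A)$ with algebraic inverse $\lambda - A$, these two identities transport every operator-theoretic property of $\mu - A$ to $\frac{1}{\lambda - \mu}I - R(\lambda, A)$ and vice versa.

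Each part then reduces to inspecting the corresponding property through this factorisation. For (b), (c), (h) and (i) (algebraic, point, approximate, residual spectrum) I read off from the factorisations that
\[
\ker(\mu - A) = \ker\bigl(\tfrac{1}{\lambda - \mu}I - R(\lambda, A)\bigr), \quad \ran\bigl(\tfrac{1}{\lambda - \mu}I - R(\lambda, A)\bigr) = \tfrac{1}{\lambda - \mu}\ran(\mu - A),
\]
so injectivity, closedness and denseness of range all correspond. For (d)--(g), an approximate eigenvector $(x_{i}) \subseteq D(A)$ of $A$ at $\mu$ serves as an approximate eigenvector of $R(\lambda, A)$ at $\frac{1}{\lambda - \mu}$ via the first factorisation and continuity of $R(\lambda, A)$; conversely, if $(y_{i})$ witnesses $\frac{1}{\lambda - \mu} \in \sigma_{\ap}(R(\lambda, A))$, then $x_{i} \coloneqq R(\lambda, A)y_{i} \in D(A)$ witnesses $\mu \in \sigma_{\ap}(A)$ by the second factorisation, and $x_{i}$ cannot converge to $0$ because otherwise the decomposition $y_{i} = (\lambda - \mu)\bigl(\tfrac{1}{\lambda - \mu}I - R(\lambda, A)\bigr)y_{i} + (\lambda - \mu)x_{i}$ would force $y_{i} \to 0$. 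Sequentiality and boundedness (the latter via continuity of $R(\lambda, A)$, which maps bounded sets to bounded sets) pass through these arguments unchanged. Part (a) combines (b) with (j), and for (j) and (k) I invert the factorisations to obtain
\[
R\bigl(\tfrac{1}{\lambda - \mu}, R(\lambda, A)\bigr) = (\lambda - \mu)^{2} R(\mu, A) + (\lambda - \mu) I,
\]
which directly gives the equivalence of continuity and, on small enough discs, of equicontinuity of the two resolvent families.

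The main obstacle is bookkeeping across the eleven cases while keeping the hypotheses minimal: no completeness of $X$ and no closedness of $(A, D(A))$ are assumed in the statement. The direction $\sigma_{\ap}(R(\lambda, A)) \to \sigma_{\ap}(A)$ is the most delicate, because one has to produce a net in $D(A)$ without invoking closedness; the choice $x_{i} = R(\lambda, A)y_{i}$ handles this automatically since $R(\lambda, A)$ maps $X$ into $D(A)$. A secondary subtlety is (k): from equicontinuity of $\{R(\mu, A) : \mu \in B(\mu_{0}, \delta)\}$ I need equicontinuity of the resolvent family of $R(\lambda, A)$ on a full disc around $\frac{1}{\lambda - \mu_{0}}$, which follows because $\mu \mapsto \frac{1}{\lambda - \mu}$ is a biholomorphism on a sufficiently small $B(\mu_{0}, \delta)$ whose image contains such a disc, and because $(\lambda - \mu)$ and $(\lambda - \mu)^{2}$ stay bounded there, so the resolvent formula above preserves equicontinuity.
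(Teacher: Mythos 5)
Your proof is correct and follows essentially the same route as the paper: your two factorisation identities are precisely the equations \eqref{eq:spec_map_res_1}--\eqref{eq:spec_map_res_2} that the paper quotes from \cite[Theorem 1.1]{albanese2016a}, and the resulting kernel/range identities, the transfer of approximate eigenvectors via $x_{i}\coloneqq R(\lambda,A)y_{i}$ together with the decomposition argument showing $x_{i}\not\to 0$, and the boundedness/sequentiality bookkeeping all match the paper's proof of parts \ref{it:spec_map_res_2} and \ref{it:spec_map_res_4}--\ref{it:spec_map_res_6}. The only organisational difference is that you establish \ref{it:spec_map_res_7}, \ref{it:spec_map_res_8} and hence \ref{it:spec_map_res_1} directly from the (correct) formula $R\bigl(\tfrac{1}{\lambda-\mu},R(\lambda,A)\bigr)=(\lambda-\mu)^{2}R(\mu,A)+(\lambda-\mu)\id$, whereas the paper cites \ref{it:spec_map_res_1}, \ref{it:spec_map_res_3} and \ref{it:spec_map_res_8} from the literature and deduces \ref{it:spec_map_res_7} from \ref{it:spec_map_res_1} and \ref{it:spec_map_res_2}; both routes are valid.
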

\begin{proof}
If $\rho(A)=\varnothing$, then the stated identities are trivially true. 
If $\rho(A)\neq\varnothing$, then $(A,D(A))$ is closed by \cite[Remark 3.1 (i), p.~259]{albanese2013} 
and it is shown in \cite[Theorem 1.1, p.~269]{albanese2016a} that the statements \ref{it:spec_map_res_1}, 
\ref{it:spec_map_res_3} and \ref{it:spec_map_res_8} hold. 
Further, the proof of \cite[Theorem 1.1, p.~269]{albanese2016a} relies on the equations 
\cite[Eq.~(2.1)--(2.4), p.~270]{albanese2016a} for $\lambda\in\rho(A)$ and 
$\eta\in\C$, $\eta\neq 0$, which are
\begin{equation}\label{eq:spec_map_res_1}
(\eta-R(\lambda,A))x=\eta\Bigl(\bigl(\lambda-\frac{1}{\eta}\bigr)-A\Bigr)R(\lambda,A)x,\quad x\in X,
\end{equation}
and
\begin{equation}\label{eq:spec_map_res_2}
(\eta-R(\lambda,A))x=R(\lambda,A)\eta\Bigl(\bigl(\lambda-\frac{1}{\eta}\bigr)-A\Bigr)x,\quad x\in D(A),
\end{equation}
implying
\begin{equation}\label{eq:spec_map_res_3}
\ker(\eta-R(\lambda,A)) = \ker\Bigl(\bigl(\lambda-\frac{1}{\eta}\bigr)-A\Bigr)
\end{equation}
and
\begin{equation}\label{eq:spec_map_res_4}
\ran(\eta-R(\lambda,A)) = \ran\Bigl(\bigl(\lambda-\frac{1}{\eta}\bigr)-A\Bigr).
\end{equation}

\ref{it:spec_map_res_2} From \eqref{eq:spec_map_res_4} we see that $\eta-R(\lambda,A)$ is not surjective if and only if 
$(\lambda-\frac{1}{\eta})-A$ is not surjective. 
So, $\eta-R(\lambda,A)$ not being surjective, implies that $\mu\coloneqq\lambda-\frac{1}{\eta}\in\sigma_{\alg}(A)$ and 
$\eta=\frac{1}{\lambda-\mu}$. On the other hand, if $\mu\in\sigma_{\alg}(A)$ is such that $\mu-A$ is not 
surjective, then we have with $\eta\coloneqq \frac{1}{\lambda-\mu}$ that $\mu=\lambda-\frac{1}{\eta}$ and 
so $\eta-R(\lambda,A)$ is not surjective. In combination with part \ref{it:spec_map_res_3} this yields that 
statement \ref{it:spec_map_res_2} holds. 

\ref{it:spec_map_res_4} ``$\subseteq$'' Let $\eta\in\sigma_{\ap}(R(\lambda,A))\setminus\{0\}$. Then
there exists a net $(x_i)_{i\in I}$ in $X$ which does not converge to $0$ and fulfils
$\lim_{i\in I}(R(\lambda,A)-\eta )x_i=0$. Then the net $(y_i)_{i\in I}$ defined by 
$y_i\coloneqq \eta R(\lambda,A)x_i \in D(A)$ fulfils $\lim_{i\in I}((\lambda-\frac{1}{\eta})-A)y_i=0$ by 
\eqref{eq:spec_map_res_1}. Suppose that $\lim_{i\in I}y_i=0$. Then 
\[
x_i=\frac{1}{\eta}(\lambda-A)y_i=\frac{1}{\eta}\Bigl(\bigl(\lambda-\frac{1}{\eta}\bigr)-A\Bigr)y_i+\frac{1}{\eta^2}y_i
\to 0,
\]
which is a contradiction. Thus $\mu\coloneqq (\lambda-\frac{1}{\eta})\in\sigma_{\ap}(A)$ and 
$\eta=\frac{1}{\lambda-\mu}$.

``$\supseteq$'' Let $\mu\in\sigma_{\ap}(A)$. Then there is a net $(x_i)_{i\in I}$ in $D(A)$ 
which does not converge to $0$ and fulfils $\lim_{i\in I}(A-\mu )x_i=0$. Then we have with 
$\eta\coloneqq \frac{1}{\lambda-\mu}$ that $\mu=\lambda-\frac{1}{\eta}$  and 
$\lim_{i\in I}(\eta-R(\lambda,A))x_i=0$ by \eqref{eq:spec_map_res_2} and the continuity of $R(\lambda,A)$. 
Thus $\frac{1}{\lambda-\mu}=\eta\in \sigma_{\ap}(R(\lambda,A))\setminus\{0\}$.

\ref{it:spec_map_res_4a} This statement follows from the proof of part \ref{it:spec_map_res_4} with $I\coloneqq\N$.

\ref{it:spec_map_res_4b} This statement follows from the proof of part \ref{it:spec_map_res_4} by noting that 
the net $(y_i)_{i\in I}$ in the proof of the inclusion ``$\subseteq$'' is bounded if $(x_i)_{i\in I}$ is bounded because 
$R(\lambda,A)$ is continuous.

\ref{it:spec_map_res_4c} This statement follows from the observation in the proof of part \ref{it:spec_map_res_4b} 
with $I\coloneqq\N$.

\ref{it:spec_map_res_5} This statement follows from part \ref{it:spec_map_res_3} and \eqref{eq:spec_map_res_4}.

\ref{it:spec_map_res_6} This statement follows from \eqref{eq:spec_map_res_4}.

\ref{it:spec_map_res_7} Since 
$\sigma_{\tp}(R(\lambda,A))=\sigma(R(\lambda,A))\setminus\sigma_{\alg}(R(\lambda,A))$ and 
$\sigma_{\tp}(A)=\sigma(A)\setminus\sigma_{\alg}(A)$, this statement follows from parts \ref{it:spec_map_res_1} 
and \ref{it:spec_map_res_2}.
\end{proof}

In the case that $X$ is a Banach space and $(A,D(A))$ closed, \prettyref{thm:spec_map_res} can also be found in 
\cite[Chap.~IV, 1.13 Spectral Mapping Theorem for the Resolvent, p.~243]{engel_nagel2000}.

We close this section with some remarks on the dual operator of a densely defined linear operator. 
We recall that a Hausdorff locally convex space is called a \emph{Schur space} if every weakly convergent sequence 
is convergent (see \cite[p.~81]{martinpeinador2015}).

\begin{prop}\label{prop:dual_spec}
Let $X$ be a Hausdorff locally convex space, $(A,D(A))$ a densely defined linear operator on $X$ and equip 
$X'$ with the topology $\beta(X',X)$ of uniform convergence on bounded subsets of $X$. 
Then the following assertions hold.
\begin{enumerate}[label=\upshape(\alph*), leftmargin=*]
\item \label{it:dual_spec_1} $\sigma(A')\subseteq \sigma(A)$,
\item \label{it:dual_spec_2} $\sigma_{\alg}(A')\subseteq \sigma_{\alg}(A)$ and 
$(\lambda-A')^{-1}=((\lambda-A)^{-1})'$ for all $\lambda\in\rho_{\alg}(A)$,
\item \label{it:dual_spec_3} $\sigma_{\p}(A')=\sigma_{\res}(A)$,
\item \label{it:dual_spec_4} $\sigma^{\ast}(A')\subseteq \sigma^{\ast}(A)$,
\item \label{it:dual_spec_5} $\sigma_{\p}(A)\subseteq \sigma_{\alg}(A')$ if $\rho(A)\neq\varnothing$,
\item \label{it:dual_spec_6} $\sigma_{\ap}^{\seq}(A)\subseteq \sigma_{\alg}(A')$ if $\rho(A)\neq\varnothing$ 
and $X$ is a Schur space.
\end{enumerate}
\end{prop}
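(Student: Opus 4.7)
The plan is to handle the six assertions in three groups: (a) and (b) by direct construction of inverses via the duality pairing; (c) and (d) via a Hahn--Banach argument and a bounded-sets computation for transposes; and (e) and (f) by producing, for a (sequential) approximate eigenvector of $A$, a functional $x'\in X'$ not lying in $\ran(\lambda-A')$.

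For (a) and (b), I would fix $\lambda\in\rho_{\alg}(A)$ with algebraic inverse $R\coloneqq(\lambda-A)^{-1}$ and show that $\lambda-A'\colon D(A')\to X'$ is bijective with inverse given by the transpose $R'$. Injectivity of $\lambda-A'$ is immediate from the surjectivity of $\lambda-A$ and the definition of $A'$: if $(\lambda-A')x'=0$, then $\langle x',(\lambda-A)w\rangle=0$ for all $w\in D(A)$, and since $\ran(\lambda-A)=X$ this forces $x'=0$. For surjectivity, given $x'\in X'$ I would set $y'(z)\coloneqq\langle x',Rz\rangle$ and verify $y'\in D(A')$ with $(\lambda-A')y'=x'$ by unwinding the definition of $A'$ on $D(A)$. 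For (a) the extra input $R\in\cL(X)$ forces $R'$ to be $\beta(X',X)$-continuous, placing $\lambda$ in $\rho(A')$. For (c) the standard argument gives $\lambda\in\sigma_{\p}(A')$ iff some $x'\in X'\setminus\{0\}$ annihilates $\ran(\lambda-A)$, which by Hahn--Banach holds iff $\ran(\lambda-A)$ is not dense, i.e.~$\lambda\in\sigma_{\res}(A)$. For (d), combining (a) with the observation that equicontinuity of $\{R(\mu,A):\mu\in B(\lambda,\delta)\}$ in $\cL(X)$ forces $\bigcup_{\mu}R(\mu,A)(B)$ to be bounded in $X$ for every bounded $B\subseteq X$ yields $\beta(X',X)$-equicontinuity of the transposes $R(\mu,A')=R(\mu,A)'$, giving $\lambda\in\rho^{\ast}(A')$.

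For (e), given $\lambda\in\sigma_{\p}(A)$ with eigenvector $x\neq 0$, I would use Hahn--Banach to pick $x'\in X'$ with $\langle x',x\rangle\neq 0$ and observe that $x'\notin\ran(\lambda-A')$: if $x'=(\lambda-A')y'$ for some $y'\in D(A')$, then $\langle x',x\rangle=\langle y',(\lambda-A)x\rangle=0$, a contradiction. Hence $\lambda-A'$ is not surjective and $\lambda\in\sigma_{\alg}(A')$. For (f), the same strategy applies to a sequential approximate eigenvector $(x_n)_{n\in\N}$: since $X$ is a Schur space and $x_n\not\to 0$ in $X$, the sequence does not converge to $0$ weakly, so there is $x'\in X'$ with $\langle x',x_n\rangle\not\to 0$; if $x'=(\lambda-A')y'$, then $\langle x',x_n\rangle=\langle y',(\lambda-A)x_n\rangle\to 0$ by continuity of $y'$ and $(\lambda-A)x_n\to 0$, again a contradiction.

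The delicate point is part (b): the algebraic inverse $(\lambda-A)^{-1}$ need not be continuous, so $y'(z)=\langle x',Rz\rangle$ lies a priori only in the algebraic dual of $X$, and the identity $(\lambda-A')^{-1}=((\lambda-A)^{-1})'$ must be interpreted accordingly, either as agreement wherever both sides are defined or via an implicit claim about the domain of the transpose of an unbounded operator. The assumption $\rho(A)\neq\varnothing$ in (e) and (f) does not appear explicitly in the sketch above; I suspect it is included to guarantee closedness of $A$ and a nontrivial $D(A')$, but I would cross-check whether the author's proof exploits it more substantively, perhaps via the spectral mapping theorem~\prettyref{thm:spec_map_res} applied to $R(\mu,A)$ for some $\mu\in\rho(A)$.
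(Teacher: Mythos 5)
Your treatment of parts (a)--(d) follows essentially the same route as the paper: the explicit construction of $(\lambda-A')^{-1}$ as the transpose of $(\lambda-A)^{-1}$, the Hahn--Banach/bipolar identification of $\sigma_{\p}(A')$ with $\sigma_{\res}(A)$, and the bounded-set computation showing that the transposes of an equicontinuous family of resolvents form a $\beta(X',X)$-equicontinuous family. For (e) and (f) you take a genuinely different and more elementary route: you exhibit directly a functional outside $\ran(\lambda-A')$, chosen via Hahn--Banach so that it does not annihilate the eigenvector, respectively so that $\langle x',x_n\rangle\not\to 0$ (using the Schur property), and the identity $\langle (\lambda-A')y',x\rangle=\langle y',(\lambda-A)x\rangle$ for $x\in D(A)$, $y'\in D(A')$ then gives the contradiction. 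The paper instead fixes $\mu\in\rho(A)$, passes to the continuous operator $R(\mu,A)$ and its transpose, and applies the spectral mapping theorem for the resolvent (\prettyref{thm:spec_map_res}) twice to translate non-surjectivity of $\frac{1}{\mu-\lambda}-R(\mu,A')$ back into $\lambda\in\sigma_{\alg}(A')$. Your argument is shorter and, as you suspected, never uses the hypothesis $\rho(A)\neq\varnothing$; the paper needs it only because of this detour through the resolvent.

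The delicate point you flag in (b) is genuine, and the paper's own proof does not resolve it: to conclude that $((\lambda-A)^{-1})'x'$ belongs to $D(A')$ one must first know that $z\mapsto\langle x',(\lambda-A)^{-1}z\rangle$ is continuous for \emph{every} $x'\in X'$, i.e.\ that $(\lambda-A)^{-1}$ is weakly continuous. Indeed, surjectivity of $\lambda-A'$ is equivalent to this weak continuity, since any preimage of $x'$ under $\lambda-A'$ must coincide with $x'\circ(\lambda-A)^{-1}$ on $\ran(\lambda-A)=X$. This is automatic when $(\lambda-A)^{-1}\in\cL(X)$ --- which is all that parts (a), (d), (e) and (f) require, since there one works with $\lambda\in\rho(A)$ --- but for $\lambda\in\rho_{\alg}(A)\setminus\rho(A)$ it is an additional assumption, so your cautious reading of the identity in (b) (agreement wherever both sides are defined, with the domain of the transpose tracked explicitly) is the safe one.
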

\begin{proof}
\ref{it:dual_spec_2} Let $\lambda\in\rho_{\alg}(A)$. Then we have for all $x'\in D(A')$ and $x\in X$ that
\[
 \langle ((\lambda-A)^{-1})'(\lambda-A')x',x\rangle
=\langle (\lambda-A)'x',(\lambda-A)^{-1}x\rangle
=\langle x',(\lambda-A)(\lambda-A)^{-1}x\rangle
=\langle x',x\rangle .
\]
Further, for all $x\in D(A)$ and $x'\in X'$ we have
\[
 \langle (\lambda-A)x,((\lambda-A)^{-1})'x'\rangle
=\langle (\lambda-A)^{-1}(\lambda-A)x,x'\rangle
=\langle x,x'\rangle
\]
and therefore $((\lambda-A)^{-1})'x'\in D(A')$, which implies
\[
 \langle(\lambda-A') ((\lambda-A)^{-1})'x',x\rangle
=\langle ((\lambda-A)^{-1})'x',(\lambda-A)x\rangle
=\langle x',(\lambda-A)^{-1}(\lambda-A)x\rangle
=\langle x',x\rangle
\]
Since $(\lambda-A') ((\lambda-A)^{-1})'x'\in X'$ and $D(A)$ is dense in $X$, we obtain that the identity
$(\lambda-A') ((\lambda-A)^{-1})'x'=x'$ holds on whole $X$. 
Thus $\rho_{\alg}(A)\subseteq \rho_{\alg}(A')$ and 
$(\lambda-A')^{-1}=((\lambda-A)^{-1})'$ for all $\lambda\in\rho_{\alg}(A)$.

\ref{it:dual_spec_1} Let $\lambda\in\rho(A)$. Then $(\lambda-A')^{-1}=R(\lambda,A)'$ by part \ref{it:dual_spec_2}. 
For bounded $M\subseteq X$ and $x'\in X'$ we have
\begin{align*}
 \sup_{x\in M}|\langle (\lambda-A')^{-1}x',x\rangle |
&=\sup_{x\in M}|\langle R(\lambda,A)'x',x\rangle | 
 =\sup_{x\in M}|\langle x',R(\lambda,A)x\rangle |\\ 
&=\sup_{y\in R(\lambda,A)M}|\langle x',y\rangle |, 
\end{align*}
which implies $(\lambda-A')^{-1}\in\cL(X_{\ob}')$ since $R(\lambda,A)M$ is a bounded subset of $X$. 
Hence $\lambda\in\rho(A')$.

\ref{it:dual_spec_4} Let $\lambda\in\rho^{\ast}(A)$. Then there is $\delta>0$ such that 
$B(\lambda,\delta)\subseteq\rho(A)$ and $\{R(\mu,A)\;|\;\mu\in B(\lambda,\delta)\}$ is equicontinuous in $\cL(X)$. 
By parts \ref{it:dual_spec_1} and \ref{it:dual_spec_2} we have $B(\lambda,\delta)\subseteq\rho(A')$ and 
$R(\mu,A')=R(\mu,A)'$ for all $\mu\in B(\lambda,\delta)$. For bounded $M\subseteq X$, $x'\in X'$ and 
$\mu\in B(\lambda,\delta)$ we have 
\[
 \sup_{x\in M}|\langle R(\mu,A')x',x\rangle |
=\sup_{y\in R(\mu,A)M}|\langle x',y\rangle |
\leq \sup_{y\in N}|\langle x',y\rangle |
\]
with $N\coloneqq R(B(\lambda,\delta),A)M$. The set $N$ is bounded in $X$ as 
$\{R(\mu,A)\;|\;\mu\in B(\lambda,\delta)\}$ is equicontinuous in $\cL(X)$. 
Thus $\{R(\mu,A')\;|\;\mu\in B(\lambda,\delta)\}$ is equicontinuous in $\cL(X_{\ob}')$ and so 
$\lambda\in\rho^{\ast}(A')$.

\ref{it:dual_spec_3} By the bipolar theorem $\ran (\lambda-A)$ for $\lambda\in\C$ is not dense in $X$ if and only if 
there is $x'\in X'$, $x'\neq 0$, such that $x'((\lambda-A)x)=0$ for all $x\in D(A)$, i.e.
\[
\langle x',Ax \rangle = \langle  \lambda x', x \rangle
\]
for all $x\in D(A)$, meaning $x'\in D(A')$, $x'\neq 0$, such that $(\lambda-A')x'=0$.

\ref{it:dual_spec_5} Let $\lambda\in\sigma_{\p}(A)$ and choose $\mu\in\rho(A)$.
Then $\mu\in\rho(A')$ and $R(\mu,A')=R(\mu,A)'$ by parts \ref{it:dual_spec_1} and \ref{it:dual_spec_2}. 
We claim that $\frac{1}{\mu-\lambda}-R(\mu,A')$ is not surjective. Suppose the contrary. 
We have $\frac{1}{\mu-\lambda}\in\sigma_{\p}(R(\mu,A))\setminus\{0\}$ by 
\prettyref{thm:spec_map_res} \ref{it:spec_map_res_3}. So, there is $x\in X$, $x\neq 0$, such that 
$R(\mu,A)x=\frac{1}{\mu-\lambda}x$. We observe that for all $x'\in X'$
\[
 0
=\Bigl\langle x', \Bigl(\frac{1}{\mu-\lambda}-R(\mu,A)\Bigr)x \Bigr\rangle
=\Bigl\langle \Bigl(\frac{1}{\mu-\lambda}-R(\mu,A')\Bigr)x', x \Bigr\rangle .
\] 
Since $\frac{1}{\mu-\lambda}-R(\mu,A')$ is surjective, this implies that $\langle y',x\rangle =0$ for all 
$y'\in X'$. By the Hahn--Banach theorem we get $x=0$, which is a contradiction. 
Thus $\frac{1}{\mu-\lambda}-R(\mu,A')$ is not surjective. 
Due to \prettyref{thm:spec_map_res} \ref{it:spec_map_res_2} we obtain that $\lambda\in\sigma_{\alg}(A')$. 

\ref{it:dual_spec_6} Let $\lambda\in\sigma_{\ap}^{\seq}(A)$, choose $\mu\in\rho(A)$ and suppose that 
$\frac{1}{\mu-\lambda}-R(\mu,A')$ is not surjective. Proceeding as in the proof of part \ref{it:dual_spec_5} 
and using \prettyref{thm:spec_map_res} \ref{it:spec_map_res_4a}, there is a sequence $(x_i)_{i\in\N}$ in $X$ 
that does not converge to $0$ and fulfils $\lim_{i\to\infty}(\frac{1}{\mu-\lambda}-R(\mu,A))x_{i}=0$. 
We observe that for all $x'\in X'$ 
\[
 0
=\Bigl\langle x', \lim_{i\to\infty}\Bigl(\frac{1}{\mu-\lambda}-R(\mu,A)\Bigr)x_{i} \Bigr\rangle
= \lim_{i\to\infty}\Bigl\langle \Bigl(\frac{1}{\mu-\lambda}-R(\mu,A')\Bigr)x', x_{i} \Bigr\rangle .
\] 
Since $\frac{1}{\mu-\lambda}-R(\mu,A')$ is surjective, this implies that $\lim_{i\to\infty}\langle y',x_i\rangle =0$ 
for all $y'\in X'$. Hence $(x_i)_{i\in\N}$ weakly converges to $0$. 
We deduce that $(x_i)_{i\in\N}$ converges to $0$ in $X$ as $X$ is a Schur space, which is contradiction. 
Thus $\frac{1}{\mu-\lambda}-R(\mu,A')$ is not surjective and we obtain that $\lambda\in\sigma_{\alg}(A')$ as in 
the proof of part \ref{it:dual_spec_5}. 
\end{proof}

\prettyref{prop:dual_spec} \ref{it:dual_spec_3} also follows from \cite[Proposition 3.6 (i), p.~924]{albanese2016b}. 
Further, it generalises \cite[Chap.~IV, 1.12 Proposition, p.~243]{engel_nagel2000} 
where $X$ is a Banach space and $(A,D(A))$ in addition assumed as closed. 
In this setting, so if $X$ is a Banach space and $(A,D(A))$ a densely defined closed linear operator on $X$, 
we have even equalities in \prettyref{prop:dual_spec} \ref{it:dual_spec_1}, \ref{it:dual_spec_2} 
and \ref{it:dual_spec_4} by e.g.~\cite[Lemma 1.4.1, p.~9]{vanneerven1992}. 
Moreover, we note that the idea how to prove \prettyref{prop:dual_spec} \ref{it:dual_spec_5} comes from 
\cite[Theorem 1.5.3, p.~18]{farkas2003}.

\section{Periodic semigroups}
\label{sect:periodic_sg}

In this section we study periodic semigroups which turn out to be quite useful in proving 
the spectral mapping theorem for the point spectrum of strongly continuous semigroups in our final section.

\begin{defn}
A semigroup $(T(t))_{t\geq 0}$ on a Hausdorff locally convex space $X$ is called \emph{periodic} if there is $t_0>0$ 
such that $T(t_0)=\id$. The (minimal) \emph{period} $\rho$ of $(T(t))_{t\geq 0}$ is given by 
\[
\rho\coloneqq\inf\{t_0>0\;|\;T(t_0)=\id\}
\]
\end{defn}

This is a generalisation of the notion of a periodic semigroup given in 
\cite[Chap.~IV, 2.23 Definition, p.~266]{engel_nagel2000} where $(T(t))_{t\geq 0}$ is a strongly continuous semigroup 
on a Banach space $X$.

\begin{rem}\label{rem:periodic_strongly}
Let $(T(t))_{t\geq 0}$ be a periodic semigroup on a Hausdorff locally convex space $X$ with period $\rho$.
\begin{enumerate}[label=\upshape(\alph*), leftmargin=*]
\item \label{it:period_strong_1} Since there is $t_0>0$ with $T(t_0)=\id$, the semigroup $(T(t))_{t\geq 0}$ 
can be extended to a group on $\R$ by setting $T(t-nt_0)\coloneqq T(t)$ for all $t\geq 0$ and $n\in\N$. 
\item \label{it:period_strong_2} If $(T(t))_{t\geq 0}$ is strongly continuous, then there is a sequence $(t_n)_{n\in\N}$ in $(0,\infty)$ 
which converges to $\rho$ such that $T(t_n)x=x$ for all $n\in\N$ and $x\in X$, implying 
$T(\rho)x=\lim_{n\to\infty}T(t_n)x=x$ for all $x\in X$ by the strong continuity.
\item \label{it:period_strong_3} If $(T(t))_{t\geq 0}$ is locally equicontinuous, then it is equicontinuous. 
Indeed, if $(T(t))_{t\geq 0}$ is periodic, then there is $t_0>0$ such that $T(t_0)=\id$ 
and we have $\sup_{t\geq 0}q(T(t)x)=\sup_{t\in [0,t_0]}q(T(t)x)$ for all $x\in X$ and $q\in\Gamma_X$ by the 
semigroup property.
\end{enumerate}
\end{rem}

\begin{prop}\label{prop:natural_period}
Let $X$ be a Hausdorff locally convex space and $(T(t))_{t\geq 0}$ a strongly continuous 
semigroup on $X$. If there is $t_0>0$ such that $T(t_0)=\id$, then $(T(t))_{t\geq 0}$ is periodic 
and its period $\rho$ fulfils $\rho\in\{0\}\cup\{\frac{t_0}{k}\;|\;k\in\N\}$.
\end{prop}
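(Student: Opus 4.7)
The plan is as follows. The first assertion, that $(T(t))_{t\geq 0}$ is periodic, is immediate from \prettyref{rem:periodic_strongly} once we observe that $t_0$ lies in the set $S\coloneqq\{s>0\;|\;T(s)=\id\}$, so $\rho=\inf S$ is well-defined and, in particular, $\rho\leq t_0$. I would then split into the two cases $\rho=0$ and $\rho>0$. In the first case there is nothing left to do. In the second case, \prettyref{rem:periodic_strongly} \ref{it:period_strong_2} yields a sequence $(t_n)_{n\in\N}\subseteq S$ with $t_n\to\rho$ and, by strong continuity, $T(\rho)x=\lim_{n\to\infty}T(t_n)x=x$ for all $x\in X$; hence $T(\rho)=\id$ as well, i.e.~$\rho\in S$.

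Next, I would perform Euclidean division of $t_0$ by $\rho$: write $t_0=k\rho+r$ with $k\in\N_{0}$ and $0\leq r<\rho$. Using the semigroup property and the identity $T(\rho)=\id$, we obtain $T(k\rho)=T(\rho)^{k}=\id$ and therefore
\[
\id=T(t_0)=T(r)T(k\rho)=T(r).
\]
If $r>0$, then $r\in S$ with $r<\rho=\inf S$, a contradiction; so $r=0$ and $t_0=k\rho$. Since $t_0>0$ forces $k\geq 1$, this gives $\rho=t_0/k$ with $k\in\N$, as desired.

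I do not foresee a real obstacle; the only things to be careful about are (a) ensuring that $\rho$ is indeed attained when $\rho>0$ (handled by citing \prettyref{rem:periodic_strongly} \ref{it:period_strong_2}), and (b) treating the degenerate case $\rho=0$ separately, which the statement of the proposition explicitly permits.
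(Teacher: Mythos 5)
Your proposal is correct. The only substantive ingredients — that $T(\rho)=\id$ when $\rho>0$ (via \prettyref{rem:periodic_strongly}~\ref{it:period_strong_2}), the semigroup law, and the minimality of $\rho=\inf\{s>0\;|\;T(s)=\id\}$ — are exactly the ones the paper uses, but you organize them differently. The paper argues by contraposition: assuming $\rho\notin\{\frac{t_0}{k}\;|\;k\in\N\}$, it proves by induction on $k$ that $\rho\in[0,\frac{t_0}{k})$ for every $k$, deriving at each step a contradiction from $T(t_0-k\rho)=\id$ with $0<t_0-k\rho<\rho$, and concludes $\rho=0$. You instead split on $\rho=0$ versus $\rho>0$ and, in the latter case, perform a single Euclidean division $t_0=k\rho+r$ with $0\leq r<\rho$, so that $T(r)=T(t_0)T(k\rho)^{-1}=\id$ forces $r=0$ and $k\geq 1$. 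Your version is shorter and avoids the induction, at the cost of having to note explicitly that $k\geq 1$ because $t_0>0$; the paper's induction avoids invoking Euclidean division but is essentially an unrolled form of the same computation. Both are complete; no gap.
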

\begin{proof}
$(T(t))_{t\geq 0}$ is periodic by definition and its period $\rho\geq 0$ fulfils $T(\rho)=\id$ 
by \prettyref{rem:periodic_strongly} \ref{it:period_strong_2}. 
Let $\rho\notin\{\frac{t_0}{k}\;|\;k\in\N\}$. We show that $\rho=0$ in this case. 
We claim that $\rho\in [0,\frac{t_0}{k})$ for all $k\in\N$. 
Indeed, for $k=1$ this is true by assumption. Let $k\in\N$ such that $\rho\in [0,\frac{t_0}{k})$. 
Suppose that $\rho\in (\frac{t_0}{k+1},\frac{t_0}{k})$. Then $0<k(\frac{t_0}{k}-\rho)<\frac{t_0}{k+1}<\rho$ and 
\[
 T\Bigl(k\Bigl(\frac{t_0}{k}-\rho\Bigr)\Bigr)
=T(t_0-k\rho)
=T(t_0-k\rho)\id
=T(t_0-k\rho)T(k\rho)
=T(t_0)
=\id ,
\]
which is a contradiction to the minimality of the period $\rho$. This verifies our claim. 
Since $\rho\in [0,\frac{t_0}{k})$ for all $k\in\N$, we conclude that $\rho=0$.
\end{proof}

\begin{prop}\label{prop:period_zero}
Let $X$ be a sequentially complete Hausdorff locally convex space and $(T(t))_{t\geq 0}$ a strongly continuous 
semigroup on $X$ with generator $(A,D(A))$. Then $(T(t))_{t\geq 0}$ is periodic with period $\rho=0$ if and only if 
$T(t)=\id$ for all $t\geq 0$.
\end{prop}
\begin{proof}
The implication ``$\Leftarrow$'' is clear so let us turn to the implication ``$\Rightarrow$''. 
Since $\rho=0$, there is a sequence $(t_n)_{n\in\N}$ in $(0,\infty)$ 
which converges to $0$ such that $T(t_n)x=x$ for all $n\in\N$ and $x\in X$. It follows that 
\[
 Ax
=\lim_{t\to 0\rlim}\frac{T(t)x-x}{t}
=\lim_{n\to \infty}\frac{T(t_n)x-x}{t_n}
=0
\]
for all $x\in D(A)$. Hence $T(t)=\id$ for all $t\geq 0$ by Equation \eqref{eq:rescale_iden_1} from \prettyref{prop:rescale_identities} with $\lambda\coloneqq 0$.
\end{proof}

Next, we transfer \cite[Chap.~IV, 2.24 Lemma, p.~266]{engel_nagel2000} to the setting of 
strongly continuous semigroups on Hausdorff locally convex spaces.

\begin{prop}\label{prop:suff_cond_periodic}
Let $X$ be a Hausdorff locally convex space and $(T(t))_{t\geq 0}$ a strongly continuous 
semigroup on $X$ with generator $(A,D(A))$. If 
\begin{enumerate}[label=\upshape(\roman*), leftmargin=*, widest=ii]
\item \label{it:period_1} $\sigma_{\p}(A)\subseteq 2\pi \mathsf{i}\alpha\Z$ for some $\alpha>0$, and
\item \label{it:period_2} the corresponding eigenvectors span a dense subspace of $X$,
\end{enumerate}
then $(T(t))_{t\geq 0}$ is periodic with period $\rho\leq \frac{1}{\alpha}$.
\end{prop}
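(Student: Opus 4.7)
The plan is to establish that $T(1/\alpha)=\id$ and then invoke \prettyref{prop:natural_period} to conclude periodicity with period $\rho\leq 1/\alpha$. Since $T(1/\alpha)\in\cL(X)$ is continuous and the span of eigenvectors of $A$ is dense in $X$ by assumption (ii), it suffices to verify that $T(1/\alpha)x=x$ for every eigenvector $x$ of $A$.

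The key subclaim I would prove is that if $\lambda\in\sigma_{\p}(A)$ and $x\in\ker(A-\lambda)$, then $T(t)x=\e^{\lambda t}x$ for all $t\geq 0$. For this I would apply \prettyref{prop:rescale_sg} with $c=1$ to pass to the rescaled semigroup $S(t)\coloneqq \e^{-\lambda t}T(t)$, which is strongly continuous with generator $(B,D(A))$ where $B=A-\lambda$. Since $x\in D(B)$ and $Bx=0$, it suffices to show $S(t)x=x$ for all $t\geq 0$. To this end I would verify that the map $\varphi\colon [0,\infty)\to X$, $\varphi(t)\coloneqq S(t)x$, has vanishing right derivative at every $t\geq 0$: for $h>0$ the semigroup property yields $h^{-1}(\varphi(t+h)-\varphi(t))=S(t)h^{-1}(S(h)x-x)$, and letting $h\to 0^{+}$ this tends to $S(t)Bx=0$ by the continuity of $S(t)\in\cL(X)$ and the definition of the generator applied to $x\in D(B)$. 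For each $x'\in X'$ the scalar function $t\mapsto\langle x',\varphi(t)\rangle$ is therefore continuous on $[0,\infty)$ and has right derivative zero everywhere, hence is constant (by the classical real-variable fact applied separately to real and imaginary parts). Consequently $\langle x',\varphi(t)\rangle=\langle x',\varphi(0)\rangle=\langle x',x\rangle$ for all $x'\in X'$ and $t\geq 0$, and the Hausdorff property of $X$ forces $\varphi(t)=x$, i.e.\ $T(t)x=\e^{\lambda t}x$.

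With the subclaim in hand, set $t=1/\alpha$: by (i) the eigenvalue $\lambda$ equals $2\pi\mathsf{i}\alpha k$ for some $k\in\Z$, so $T(1/\alpha)x=\e^{2\pi\mathsf{i}k}x=x$. Extending linearly to the span of all eigenvectors of $A$ and then by continuity of $T(1/\alpha)$ together with the density from (ii) yields $T(1/\alpha)=\id$ on all of $X$. Finally, \prettyref{prop:natural_period} applied with $t_{0}=1/\alpha$ shows that $(T(t))_{t\geq 0}$ is periodic with period $\rho\in\{0\}\cup\{\tfrac{1}{k\alpha}\;|\;k\in\N\}$, and in particular $\rho\leq 1/\alpha$.

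The main obstacle is the subclaim in the general locally convex setting: no (sequential) completeness on $X$ is available, so \prettyref{prop:rescale_identities} cannot be invoked to integrate the relation $\frac{d}{dt}S(t)x=S(t)Bx=0$ directly; the workaround via the one-sided difference quotient, testing against $X'$, and Hausdorffness is what bypasses this. Everything else is formal manipulation with the rescaling from \prettyref{prop:rescale_sg} and the structural result \prettyref{prop:natural_period}.
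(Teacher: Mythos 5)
Your proposal is correct and follows essentially the same route as the paper: both arguments show that $T(t)x=\e^{\lambda t}x$ for each eigenvector $x$ by a vanishing-derivative argument applied to the orbit (the paper differentiates $s\mapsto\e^{\lambda(t-s)}T(s)x$ using K\={o}mura's differentiability result, while you use one-sided difference quotients of $s\mapsto\e^{-\lambda s}T(s)x$ tested against functionals), and then conclude $T(\frac{1}{\alpha})=\id$ from density of the eigenvectors and continuity. Your right-derivative workaround is a slightly more self-contained way to handle the absence of completeness, but it is the same underlying idea.
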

\begin{proof}
First, we note that condition \ref{it:period_2} guarantees that $\sigma_{\p}(A)\neq\varnothing$. 
Let $\lambda\in\sigma_{\p}(A)$. Then there is $x\in D(A)$, $x\neq 0$, such that $Ax=\lambda x$, and by 
\ref{it:period_1} there is $n\in\Z$ with $\lambda=2\pi \mathsf{i}\alpha n$. For $t>0$ we set $f\colon [0,t]\to X$, 
$f(s)\coloneqq \e^{2\pi \mathsf{i}\alpha n(t-s)}T(s)x$. Due to \cite[Proposition 1.2 (1), p.~260]{komura1968} $f$ is 
continuously differentiable and 
\begin{align*}
  f'(s)
&=-2\pi \mathsf{i} \alpha n\e^{2\pi \mathsf{i}\alpha n(t-s)}T(s)x+\e^{2\pi \mathsf{i}\alpha n(t-s)}T(s)Ax\\
&=\e^{2\pi \mathsf{i}\alpha n(t-s)}(-T(s)(2\pi \mathsf{i} \alpha n x)+T(s)(2\pi \mathsf{i} \alpha n x))
 =0
\end{align*}
for all $s\in [0,t]$, $f(0)=\e^{2\pi \mathsf{i}\alpha nt}x$, and $f(t)=T(t)x$. 
Thus $T(t)x=\e^{2\pi \mathsf{i}\alpha nt}x$ for all 
$t\geq 0$ and so $T(\frac{1}{\alpha})x=x$. We deduce that $(T(t))_{t\geq 0}$ is periodic with period 
$\rho\leq \frac{1}{\alpha}$ from \ref{it:period_2} and the continuity of $T(\frac{1}{\alpha})$.
\end{proof}

Next, we generalise some results from \cite[Chap.~IV, 1.17 Isolated Singularities, p.~246--247]{engel_nagel2000}, \cite[Chap.~IV, 2.25 Lemma, p.~266]{engel_nagel2000}, \cite[Chap.~IV, Eq.~(2.9), p.~267]{engel_nagel2000} 
and \cite[Chap.~8, Sect.~8, Theorem 3, p.~229]{yosida1968}. 
This allows us to describe the (point) spectrum and the eigenspaces of a periodic strongly continuous 
locally equicontinuous semigroup by studying the properties of its resolvent.

\begin{prop}\label{prop:resolvent_periodic}
Let $X$ be a sequentially complete Hausdorff locally convex space and $(T(t))_{t\geq 0}$ a periodic strongly continuous 
locally equicontinuous semigroup on $X$ with period $\rho>0$ and generator $(A,D(A))$. 
Then the following assertions hold.
\begin{enumerate}[label=\upshape(\alph*), leftmargin=*]
\item \label{it:res_per_1} $\sigma^{\ast}(A)\subseteq \frac{2\pi \mathsf{i}}{\rho}\Z$ and
\[
R(\mu,A)x=(1-\e^{-\mu \rho})^{-1}\int_{0}^{\rho}\e^{-\mu s}T(s)x\d s 
\]
for all $\mu\notin \frac{2\pi \mathsf{i}}{\rho}\Z$ and $x\in X$.
\item \label{it:res_per_2} The map $R(\cd,A)\colon \C\setminus  \frac{2\pi \mathsf{i}}{\rho}\Z\to \cL_{\ob}(X)$ 
is holomorphic and with $\mu_n\coloneqq \frac{2\pi \mathsf{i}n}{\rho}$ the limit 
$P_n\coloneqq \lim_{\mu\to\mu_n}(\mu-\mu_n)R(\mu,A)$ exists in $\cL_{\ob}(X)$ and
\[
P_n x=\frac{1}{\rho}\int_{0}^{\rho}\e^{-\mu_n s}T(s)x\d s 
\]
for all $n\in\Z$ and $x\in X$.
\item \label{it:res_per_3} For all $n\in\Z$ and $x\in X$ we have the Laurent series expansion
\[ 
R(\mu,A)x=\sum_{k=-1}^{\infty}a_{k,n}(x)(\mu-\mu_n)^k,\quad 0<|\mu-\mu_n |<\frac{2\pi}{\rho},
\]
which converges locally uniformly in $\mu$ in the topology of $X$ and where 
\begin{align*}
a_{k,n}(x)&=\frac{1}{2\pi \mathsf{i}}\int_{\partial B(\mu_n ,r)}\frac{R(\lambda,A)x}{(\lambda-\mu_n)^{k+1}}\d\lambda\in X,
\quad k\in\N_0 ,\\
a_{-1,n}(x)&=\frac{1}{2\pi \mathsf{i}}\int_{\partial B(\mu_n ,r)}R(\lambda,A)x\d\lambda=P_n x
\end{align*}
for all $0<r<\frac{2\pi}{\rho}$ and the boundary $\partial B(\mu_n ,r)=\{z\in\C\;|\;|z-\mu_n|=r\}$ 
is positively oriented. Further, $a_{k,n}\in\cL(X)$ for all $k\in\N_{0}\cup\{-1\}$ and $n\in\Z$. 

If in addition $\cL_{\ob}(X)$ is sequentially complete, then the Laurent series and the Riemann integral for the coefficients 
$a_{k,n}$ can be taken w.r.t.~the topology of $\cL_{\ob}(X)$.
\item \label{it:res_per_4} $\ran (P_n)=\ker (\mu_n - A)$ and $P_n$ is a projection for all $n\in\Z$.
\item \label{it:res_per_5} $\sigma^{\ast}(A)=\sigma_{\p}(A)\subseteq \frac{2\pi \mathsf{i}}{\rho}\Z$.
\end{enumerate}
\end{prop}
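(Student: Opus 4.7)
The inclusion $\sigma^{\ast}(A) \subseteq \frac{2\pi \mathsf{i}}{\rho}\Z$ is already given by part (a), and $\sigma_{\p}(A) \subseteq \sigma(A) \subseteq \sigma^{\ast}(A)$ is immediate from the definitions, so the only nontrivial content is $\sigma^{\ast}(A) \subseteq \sigma_{\p}(A)$. I would argue this by contraposition: fix $n \in \Z$, set $\mu_n := \frac{2\pi \mathsf{i}n}{\rho}$, assume $\mu_n \notin \sigma_{\p}(A)$, and aim to show $\mu_n \in \rho^{\ast}(A)$.

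Under this hypothesis, part (d) gives $\ran(P_n) = \ker(\mu_n - A) = \{0\}$, so $P_n = 0$ (as it is a projection). Then the Laurent series in (c) collapses to a Taylor series, and $Sx := a_{0,n}(x) = \lim_{\mu\to\mu_n}R(\mu,A)x$ defines an operator $S \in \cL(X)$. Applying $A$ to $AR(\mu,A)x = \mu R(\mu,A)x - x$ and using the closedness of $(A,D(A))$ (available because $(T(t))_{t\geq 0}$ is locally equicontinuous) yields $Sx \in D(A)$ and $(\mu_n - A)Sx = x$ for every $x \in X$. Combined with the injectivity of $\mu_n - A$, this gives $\mu_n \in \rho(A)$ with $R(\mu_n,A) = S$. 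For the equicontinuity required by $\rho^{\ast}(A)$, I would fall back on the explicit formula from part (a): since $P_n = 0$ means $\int_0^\rho \e^{-\mu_n s} T(s)x\d s = 0$, one may rewrite
\[
R(\mu,A)x = \frac{\mu-\mu_n}{1-\e^{-\mu\rho}}\int_0^\rho \frac{\e^{-\mu s} - \e^{-\mu_n s}}{\mu - \mu_n}\,T(s)x\d s
\]
for $\mu \in B(\mu_n,\tfrac{2\pi}{\rho})\setminus\{\mu_n\}$, where the scalar prefactor extends holomorphically to $\mu_n$ (with value $1/\rho$) and the difference-quotient kernel is jointly continuous on $\overline{B(\mu_n,\delta)}\times[0,\rho]$ for any $\delta < \frac{2\pi}{\rho}$. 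Since by \prettyref{rem:periodic_strongly} \ref{it:period_strong_3} the periodic locally equicontinuous semigroup is in fact equicontinuous, there exist, for each $q \in \Gamma_X$, $p \in \Gamma_X$ and $C \geq 0$ with $q(T(s)x) \leq C p(x)$ uniformly in $s \in [0,\rho]$. Pushing $q$ through the Riemann integral and bounding the scalar factors by their suprema on the compact box produces a single estimate $q(R(\mu,A)x) \leq C' p(x)$ uniform in $\mu \in B(\mu_n,\delta)$ (and extending to $\mu_n$ itself by lower semicontinuity of $q$). Since by (a) this disk meets $\frac{2\pi\mathsf{i}}{\rho}\Z$ only at $\mu_n$, we also have $B(\mu_n,\delta) \subseteq \rho(A)$, so $\mu_n \in \rho^{\ast}(A)$, contradicting $\mu_n \in \sigma^{\ast}(A)$.

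The main obstacle I anticipate is this equicontinuity step: in the Banach setting one would bound an operator norm on a compact annulus, but in the locally convex setting the estimate must be realised by a single seminorm pair $(p,C)$ that works simultaneously for all $s \in [0,\rho]$ and all $\mu$ in the disk. The decisive input is precisely that periodicity upgrades \emph{local} equicontinuity to full equicontinuity via \prettyref{rem:periodic_strongly} \ref{it:period_strong_3}, so one seminorm bound controls $T(s)$ uniformly on $[0,\rho]$; the scalar factors are then bounded on the compact box by a removable-singularity argument. The remaining steps (identifying $S = R(\mu_n,A)$ by closedness of $A$, and verifying $B(\mu_n,\delta)\subseteq\rho(A)$) are essentially bookkeeping.
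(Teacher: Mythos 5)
Your proposal only addresses assertion \ref{it:res_per_5}; assertions \ref{it:res_per_1}--\ref{it:res_per_4} are taken as given and never established. Since those four parts carry most of the weight of the proposition (the resolvent formula via the identities \eqref{eq:rescale_iden_1}--\eqref{eq:rescale_iden_2}, the holomorphy of $R(\cd,A)$ and the existence of the limit $P_n$ in $\cL_{\ob}(X)$, the vector-valued Laurent expansion with $a_{k,n}\in\cL(X)$, and the contour-integral computation giving $\ran(P_n)=\ker(\mu_n-A)$) and since your argument for \ref{it:res_per_5} leans on every one of them, this is a genuine gap if the whole proposition is the target: as written, the proof is circular in the sense that it derives the last clause of the statement from the first four.

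For part \ref{it:res_per_5} itself, your argument is correct and follows the same overall strategy as the paper (contraposition, $P_n=0$ forces the Laurent series to collapse, identify $a_{0,n}$ as $R(\mu_n,A)$, then verify the equicontinuity required for $\rho^{\ast}(A)$), with two local variations. First, to show $(\mu_n-A)a_{0,n}(x)=x$ you pass to the limit in $AR(\mu,A)x=\mu R(\mu,A)x-x$ and invoke closedness of $A$; the paper instead pulls $A-\mu_n$ through the contour integral defining $a_{0,n}$ (its identities \eqref{eq:res_per_2} and \eqref{eq:res_per_3}). Both are valid; yours is slightly more self-contained once \ref{it:res_per_3} is available. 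Second, for the equicontinuity of $\{R(\mu,A)\;|\;\mu\in B(\mu_n,\delta)\}$ you cancel the singular factor in the explicit formula from \ref{it:res_per_1} using $\int_0^\rho\e^{-\mu_n s}T(s)x\,\d s=\rho P_nx=0$ and bound the resulting jointly continuous kernel on a compact box, whereas the paper applies the maximum principle (Cauchy's inequality) to the holomorphically extended $R(\cd,A)x$ and then uses the boundary estimate \eqref{eq:res_per_0} on $\partial B(\mu_n,r)$, which stays away from the zero of $1-\e^{-\mu\rho}$. Your estimate is the more elementary of the two and correctly isolates the key point that periodicity upgrades local equicontinuity of the semigroup to a single seminorm bound uniform in $s\in[0,\rho]$. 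So the content you do supply is sound; what is missing is everything before it.
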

\begin{proof}
\ref{it:res_per_1} Choosing $\lambda\coloneqq \mu\notin \frac{2\pi \mathsf{i}}{\rho}\Z$ and $t\coloneqq \rho$, 
we get $1-\e^{-\mu \rho}\neq 0$ and by \prettyref{prop:rescale_identities} that
\[
 (1-\e^{-\mu \rho})x
=-\bigl(\e^{-\mu \rho}T(\rho)x-x\bigr)
\underset{\mathclap{\eqref{eq:rescale_iden_1}}}{=}-(A-\mu)\int_{0}^{\rho}\e^{-\mu s}T(s)x \d s
\]
for all $x\in X$, and
\[
(1-\e^{-\mu \rho})x\underset{\mathclap{\eqref{eq:rescale_iden_2}}}{=}-\int_{0}^{\rho}\e^{-\mu s}T(s)(A-\mu)x \d s
\]
for all $x\in D(A)$. This implies 
\[
x=(\mu-A)(1-\e^{-\mu \rho})^{-1}\int_{0}^{\rho}\e^{-\mu s}T(s)x \d s
\]
for all $x\in X$, and 
\[
x = (1-\e^{-\mu \rho})^{-1}\int_{0}^{\rho}\e^{-\mu s}T(s)(\mu-A)x \d s .
\]
for all $x\in D(A)$. 
By the first equality above $\mu-A \colon D(A)\to X$ is surjective and by the second injective. Thus we have 
\[
(\mu-A)^{-1}x=(1-\e^{-\mu \rho})^{-1}\int_{0}^{\rho}\e^{-\mu s}T(s)x \d s
\]
for all $x\in X$. Let $q\in\Gamma_X$. By the local equicontinuity there are $p\in\Gamma_X$ and $C\geq 0$ such that 
for all $x\in X$
\begin{align}\label{eq:res_per_0}
      q((\mu-A)^{-1}x)
&\leq |1-\e^{-\mu \rho}|^{-1}\rho\e^{|\re(\mu)|\rho}\sup_{s\in [0,\rho]}q(T(s)x)\nonumber\\
&\leq C|1-\e^{-\mu \rho}|^{-1}\rho\e^{|\re(\mu)|\rho}p(x).
\end{align}
Thus $(\mu-A)^{-1}\in\mathcal{L}(X)$ for all $\mu\in\C\setminus\frac{2\pi\mathsf{i}}{\rho}\Z$. 
Since $\C\setminus\frac{2\pi\mathsf{i}}{\rho}\Z$ is open, there is $\delta>0$ such that 
$\overline{B(\mu,\delta)}\subset \C\setminus\frac{2\pi\mathsf{i}}{\rho}\Z$ and 
\[
 q((\eta-A)^{-1}x)
\underset{\mathclap{\eqref{eq:res_per_0}}}{\leq} C|1-\e^{-\eta \rho}|^{-1}\rho\e^{|\re(\eta)|\rho}p(x)
\leq C\rho\sup_{s\in \overline{B(\mu,\delta)}}|1-\e^{-s \rho}|^{-1}\e^{|\re(s)|\rho}p(x)
\]
for all $x\in X$. Hence $\{R(\eta,A)\;|\;\eta\in B(\mu,\delta)\}$ is equicontinuous in $\cL(X)$, 
which finishes the proof of part \ref{it:res_per_1}.

\ref{it:res_per_2} First, we show that $R(\cd,A)$ is holomorphic. 
We note that $\C\setminus\frac{2\pi\mathsf{i}}{\rho}\Z$ 
is open and that it suffices to show that the map $f\colon \C\to \cL_{\ob}(X)$ 
defined by 
$f(\mu)x\coloneqq \int_{0}^{\rho}\e^{-\mu s}T(s)x\d s$, $x\in X$, is holomorphic. 
Let $\mu,\eta\in\C$. Let $q\in\Gamma_X$ and $M\subseteq X$ be bounded. By the local equicontinuity of the semigroup and 
the boundedness of $M$ there are $p\in\Gamma_X$ and $C_0,C_1\geq 0$ such that $q(T(s)x)\leq C_0 p(x)$ for all 
$s\in [0,\rho]$ and $x\in X$, and $\sup_{x\in M} p(x)\leq C_1$. We deduce that
\begin{align*}
 \sup_{x\in M}q(f(\mu)x-f(\eta)x)
&=\sup_{x\in M}q\Bigl(\int_{0}^{\rho}(\e^{-\mu s}-\e^{-\eta s})T(s)x\d s \Bigr)\\
&\leq \rho \sup_{x\in M} \sup_{s\in [0,\rho]}|\e^{-\mu s}-\e^{-\eta s}|q(T(s)x)\\
&\leq C_0 \rho \sup_{s\in [0,\rho]}|\e^{-\mu s}-\e^{-\eta s}|\sup_{x\in M}p(x)
 \leq C_0 C_1 \rho\sup_{s\in [0,\rho]}|\e^{-\mu s}-\e^{-\eta s}| .
\end{align*}
This implies that $f$ is continuous. 

Now, let $h\in\C$ such that $0<|h|<\frac{2\pi}{\rho}$. 
Then we have
\[
 \frac{f(\mu+h)x-f(\mu)x}{h}-\int_{0}^{\rho}-s\e^{-\mu s}T(s)x\d s 
=\int_{0}^{\rho}\Bigl(\frac{\e^{-(\mu+h) s}-\e^{-\mu s}}{h}+s\e^{-\mu s}\Bigr)T(s)x\d s 
\]
for all $x\in X$. Again the local equicontinuity of the semigroup and 
the boundedness of $M$ yield that
\[
    q\Bigl(\int_{0}^{\rho}-s\e^{-\mu s}T(s)x\d s\Bigr)
\leq \rho^2 \e^{|\re(\mu)|\rho}\sup_{s\in [0,\rho]}q(T(s)x)
\leq C_0\rho^2 \e^{|\re(\mu)|\rho}p(x)
\]
for all $x\in X$ and 
\begin{flalign*}
&\phantom{\leq}\sup_{x\in M}q\left(\int_{0}^{\rho}\Bigl(\frac{\e^{-(\mu+h) s}-\e^{-\mu s}}{h}+s\e^{-\mu s}\Bigr)
 T(s)x\d s \right)\\
&\leq\rho \sup_{s\in [0,\rho]}\Bigl|\frac{\e^{-(\mu+h) s}-\e^{-\mu s}}{h}+s\e^{-\mu s}\Bigr|
       \sup_{x\in M}\sup_{s\in [0,\rho]}q(T(s)x) \\
&\leq C_0\rho\sup_{s\in [0,\rho]}\Bigl|\e^{-\mu s}\Bigr|\Bigl|\frac{\e^{-h s}-1}{h}+s\Bigr|
       \sup_{x\in M}p(x) \\
&\leq C_0 C_1 \rho\e^{|\re(\mu)|\rho}\sup_{s\in [0,\rho]}\frac{1}{|h|}\Bigl|\sum_{k=2}^{\infty}\frac{1}{k!}(-hs)^k\Bigr| 
 \leq C_0 C_1 \rho\e^{|\re(\mu)|\rho}\sup_{s\in [0,\rho]}|h||s|^2\sum_{k=0}^{\infty}\frac{1}{k!}|hs|^{k}\\
&\leq C_0 C_1 \rho^3 \e^{|\re(\mu)|\rho+2\pi} |h|. 
\end{flalign*}
Letting $h\to 0$, this yields that $f$ is holomorphic, $f'(\mu)x=\int_{0}^{\rho}-s\e^{-\mu s}T(s)x\d s$ for 
all $x\in X$ and $f'(\mu)\in\cL(X)$. 
Thus, $R(\cd,A)$ is holomorphic as well. Further, by l'H\^opital's rule and the continuity of $f$ on $\C$ we get 
for all $n\in\Z$ that
\[
 P_n
=\lim_{\mu\to\mu_n}(\mu-\mu_n)R(\mu,A)
\underset{\mathclap{\text{\ref{it:res_per_1}}}}{=}\lim_{\mu\to\mu_n}\frac{\mu-\mu_n}{1-\e^{-\mu \rho}}f(\mu)
=\frac{1}{\rho \e^{-\mu_n \rho}}f(\mu_n)
=\frac{1}{\rho}f(\mu_n)
\]
where the limit is taken in $\cL_{\ob}(X)$. Further, this implies for all $x\in X$ that
\[
P_n x = \frac{1}{\rho}f(\mu_n)x = \frac{1}{\rho}\int_{0}^{\rho}\e^{-\mu_n s}T(s)x\d s.
\]
\ref{it:res_per_3} Let $x\in X$. By part \ref{it:res_per_2} the map $
R(\cd,A)x\colon\C\setminus\frac{2\pi\mathsf{i}}{\rho}\Z\to X$ is holomorphic and has poles of order at most $1$ 
at each $\mu_n$ for $n\in\Z$. This implies the first part of our statement by \cite[p.~274--275]{jorda2005} 
(cf.~\cite[p.~243]{kaballo2014} in the case that $X$ is quasi-complete). 
In particular, the formula for the coefficients $a_{k,n}(x)$, $k\in\N_{0}\cup\{-1\}$, follows from 
\cite[Eq.~(3), p.~275]{jorda2005} where we note that the integrals in our formulas for the 
$a_{k,n}(x)$ are Riemann integrals in $X$, existing by \cite[Theorem 10, p.~317]{albanese2012}, which coincide with the 
Pettis integrals in \cite[Eq.~(3), p.~275]{jorda2005}. The identity $a_{-1,n}(x)=P_n x$ is a consequence of 
part \ref{it:res_per_2} and the Laurent series expansion. Furthermore, for $q\in\Gamma_X$ there are $p\in\Gamma_X$ 
and $C\geq 0$ by \eqref{eq:res_per_0} such that for $0<r<\frac{2\pi}{\rho}$  
\begin{align*}
  q(a_{k,n}(x))
&=\frac{1}{2\pi}q\Bigl(\int_{\partial B(\mu_n ,r)}\frac{R(\lambda,A)x}{(\lambda-\mu_n)^{k+1}}\d\lambda\Bigr)
 \leq \frac{1}{r^k}\sup_{\lambda\in \partial B(\mu_n ,r)}q(R(\lambda,A)x)\\
&\underset{\mathclap{\eqref{eq:res_per_0}}}{\leq} \frac{C}{r^k}\sup_{\lambda\in \partial B(\mu_n ,r)}|1-\e^{-\lambda \rho}|^{-1}\rho\e^{|\re(\lambda)|\rho}p(x)
\end{align*}
for all $x\in X$. Thus $a_{k,n}\in\cL(X)$.

The addendum in the case that $\cL_{\ob}(X)$ is sequentially complete also follows from part \ref{it:res_per_2} 
and \cite[p.~274--275]{jorda2005} where the sequential completeness of $\cL_{\ob}(X)$ guarantees the convergence 
of the Laurent series and the existence of the Riemann integrals in our formulas for the $a_{k,n}$ w.r.t.~the topology 
of $\cL_{\ob}(X)$. 

\ref{it:res_per_4} Let $n\in\Z$ and $0<r<\frac{2\pi}{\rho}$. First, we remark that 
$R(\lambda,A)x\in D(A)=D(A-\mu_n)$ and 
\begin{align}\label{eq:res_per_1}
 (A-\mu_n)R(\lambda,A)x
&=(A-\lambda+\lambda-\mu_n)R(\lambda,A)x\nonumber\\
&=-x+(\lambda-\mu_n)R(\lambda,A)x\\
&=R(\lambda,A)(A-\lambda+\lambda-\mu_n)x
 =R(\lambda,A)(A-\mu_n)x\nonumber
\end{align}
for all $\lambda\in\rho(A)$ and $x\in X$. This implies
\begin{align}\label{eq:res_per_2}
  (A-\mu_n)a_{0,n}(x)
&=(A-\mu_n)\frac{1}{2\pi \mathsf{i}}\int_{\partial B(\mu_n ,r)}\frac{R(\lambda,A)x}{\lambda-\mu_n}\d\lambda\nonumber\\
&=\frac{1}{2\pi \mathsf{i}}\int_{\partial B(\mu_n ,r)}
  (A-\mu_n)\frac{R(\lambda,A)x}{\lambda-\mu_n}\d\lambda
 \underset{\mathclap{\eqref{eq:res_per_1}}}{=}-x+\frac{1}{2\pi \mathsf{i}}\int_{\partial B(\mu_n ,r)} 
  R(\lambda,A)x\d\lambda\nonumber\\
&=-x+P_n x
\end{align}
for all $x\in X$ by \cite[Remark 3.7, p.~51--52]{kruse_schwenninger2025} and part \ref{it:res_per_3}. 
Moreover, we get from \eqref{eq:res_per_1} and \eqref{eq:res_per_2} that
\begin{equation}\label{eq:res_per_3}
(A-\mu_n)a_{0,n}(x)=a_{0,n}((A-\mu_n)x)
\end{equation}
for all $x\in X$. Now, we prove $\ran (P_n)=\ker (\mu_n - A)$.

``$\subseteq$'' Let $x\in X$. Then we have $P_n x\in D(A)$ by \prettyref{prop:rescale_identities}, 
and by part \ref{it:res_per_2} and \prettyref{rem:periodic_strongly} \ref{it:period_strong_2} that
\begin{equation}\label{eq:res_per_4}
 (A-\mu_n)P_n x
=(A-\mu_n)\frac{1}{\rho}\int_{0}^{\rho}\e^{-\mu_n s}T(s)x\d s 
\underset{\mathclap{\eqref{eq:rescale_iden_1}}}{=}\frac{1}{\rho}\Bigl(\e^{-\mu_n \rho}T(\rho)x-x\Bigr)
=\frac{1}{\rho}(x-x)
=0,
\end{equation}
implying $P_n x\in \ker (A-\mu_n)$.

``$\supseteq$'' Let $x\in \ker (A-\mu_n)$. Then we have 
%\[
%(A-\mu_n)R(\lambda,A)x
%\underset{\mathclap{\eqref{eq:res_per_1}}}{=}-x+(\lambda-\mu_n)R(\lambda,A)x
%=-x+R(\lambda,A)(\lambda x - Ax)
%=-x+x
%=0,
%\]
%which yields
\[
0=a_{0,n}(0)
 =a_{0,n}((A-\mu_n)x)
 \underset{\mathclap{\eqref{eq:res_per_3}}}{=}(A-\mu_n)a_{0,n}(x)
 \underset{\mathclap{\eqref{eq:res_per_2}}}{=}-x+P_n x .
\]
We conclude that $x=P_n x\in\ran (P_n)$. Thus $\ker (A-\mu_n)=\ran(P_n)$ and in particular $\id=P_n$ on $\ran(P_n)$, 
so $P_n$ is a projection.

\ref{it:res_per_5} We only need to show that $\sigma^{\ast}(A)\subseteq\sigma_{\p}(A)$. Due to part \ref{it:res_per_1} 
we know that $\sigma^{\ast}(A)\subseteq\frac{2\pi\mathsf{i}}{\rho}\Z$. Let $\lambda\in\sigma^{\ast}(A)$. 
Then there is $n\in\Z$ such that $\lambda=\frac{2\pi\mathsf{i}n}{\rho}=\mu_n$. 
Suppose that $\mu_n\notin\sigma_{\p}(A)$. 
By part \ref{it:res_per_4} this means that $\ran (P_n)=\ker (\mu_n - A)=\{0\}$. Hence we have 
\[
 a_{0,n}((\mu_n -A)x)
\underset{\mathclap{\eqref{eq:res_per_3}}}{=}(\mu_n-A)a_{0,n}(x)
\underset{\mathclap{\eqref{eq:res_per_2}}}{=}x-P_n x
=x-0
=x 
\]
for all $x\in X$. By part \ref{it:res_per_3} we know that $a_{0,n}\in\cL(X)$. 
We deduce that $\mu_n\in\rho(A)$ and $R(\mu_n,A)=a_{0,n}$. 
Since $P_n x= 0$, the map $R(\cd,A)x\colon\C\setminus\frac{2\pi \mathsf{i}}{\rho}\Z\to X$ has a holomorphic extension 
at $\mu=\mu_n$ with value $a_{0,n}(x)=R(\mu_n,A)x$ for all $x\in X$ by parts \ref{it:res_per_2} and \ref{it:res_per_3}.  
So, $R(\cd,A)x$ is holomorphic 
on $B(\mu_n,\frac{2\pi}{\rho})$ for all $x\in X$. Let $q\in \Gamma_X$ and $0<r<\frac{2\pi}{\rho}$. 
Due to Cauchy's inequality (see e.g.~\cite[Proposition 2.5, p.~57]{dineen1981}) and \eqref{eq:res_per_0}
there are $p\in\Gamma_X$ and $C\geq 0$ such that for all $\mu\in B(\mu_n,r)$ and all $x\in X$
\[
q(R(\mu,A)x)\leq \max_{\eta\in\partial B(\mu_n, r)} q(R(\eta,A)x)
\underset{\mathclap{\eqref{eq:res_per_0}}}{\leq} C\rho\max_{\eta\in\partial B(\mu_n, r)}|1-\e^{-\eta \rho}|^{-1}
 \e^{|\re(\eta)|\rho}p(x).
\]
We deduce that $\{R(\mu,A)\;|\;\mu\in B(\mu_n,r)\}$ is equicontinuous in $\cL(X)$. 
This implies $\lambda=\mu_n\in\rho^{\ast}(A)$, which is a contradiction. Thus $\lambda\in\sigma_{\p}(A)$.
\end{proof}

The following remark is relevant in view of the addendum of \prettyref{prop:resolvent_periodic} \ref{it:res_per_3} 
and \ref{it:res_per_5}.

\begin{rem}\label{rem:space_cont_lin_seq_compl}
Let $X$ be a Hausdorff locally convex space. 
\begin{enumerate}[label=\upshape(\alph*), leftmargin=*]
\item \label{it:cont_lin_seq_compl_1} Let $X$ be sequentially complete. Then $ \cL_{\ob}(X)$ is sequentially complete 
if $X$ is barrelled (see \cite[(1.8) Proposition, (1.9), p.~164--165]{dierolf1985}, %Theorem 17, p.~22--23, in Frerick/Wengenroth, On the mathematical work of Susanne Dierolf, easier to read there 
cf.~\cite[Remark 3.5 (ii), p.~262]{albanese2013}), or if $X$ is a gDF space (see the comment directly after 
\cite[12.4.2 Theorem, p.~258]{jarchow1981}). 
\item \label{it:cont_lin_seq_compl_2} If $\cL_{\ob}(X)$ is sequentially complete, 
then $X$ is sequentially complete. 
Indeed, by \cite[Chap.~8, \S 39.1, (2'), p.~132]{koethe1979} $X$ is topologically isomorphic to a complemented subspace 
of $\cL_{\ob}(X)$. Since complemented subspaces are closed by \cite[p.~77]{jarchow1981}, $X$ can be considered as 
a closed subspace of the sequentially complete space $\cL_{\ob}(X)$. Therefore $X$ is also sequentially complete.
\item \label{it:cont_lin_seq_compl_3} If $X$ is sequentially complete and 
$(T(t))_{t\geq 0}$ a periodic strongly continuous locally equicontinuous semigroup on $X$ with generator 
$(A,D(A))$, then 
\[
\sigma^{\ast}(A)=\sigma(A)=\sigma_{\apr}(A)=\sigma_{\ap}(A)=\sigma_{\ap}^{\seq}(A)=\sigma_{\bap}(A)
=\sigma_{\bap}^{\seq}(A)=\sigma_{\p}(A)
\]
by \prettyref{rem:point_spec_included} \ref{it:p_spec_included_1} and 
\prettyref{prop:resolvent_periodic} \ref{it:res_per_5}. 
\end{enumerate}
\end{rem}

We note that we adjusted the proof of \cite[Chap.~8, \S 39.6, (2a), p.~143]{koethe1979} 
to prove \prettyref{rem:space_cont_lin_seq_compl} \ref{it:cont_lin_seq_compl_2}.

\begin{exa}\label{ex:periodic_sg}
We denote by $\mathcal{H}(\D)$ the space of $\C$-valued holomorphic functions on the open unit disc 
$\D\coloneqq \{z\in\C\;|\;|z|<1\}$ and define the \emph{Hardy space} of bounded holomorphic functions by 
\[
H^{\infty}\coloneqq\{f\in\mathcal{H}(\D)\;|\;\|f\|_{\infty}\coloneqq \sup_{z\in\D}|f(z)|<\infty\}.
\]
Further, we denote by $\tau_{\operatorname{co}}$ the \emph{compact-open topology} on $H^{\infty}$, i.e.~the topology 
of uniform convergence on compact subsets of $\D$. By \cite[I.1.27 Remark, p.~19]{cooper1978} and 
\cite[V.1.1 Proposition 1), 4), p.~226--227]{cooper1978} 
the triple $(H^{\infty},\|\cd\|_{\infty},\tau_{\operatorname{co}})$ is a Saks space and 
$(H^{\infty},\gamma(\|\cd\|_{\infty},\tau_{\operatorname{co}}))$ is a complete semi-Montel gDF space. 
In particular, $\cL_{\ob}(H^{\infty},\gamma(\|\cd\|_{\infty},\tau_{\operatorname{co}}))$ is sequentially complete 
by \prettyref{rem:space_cont_lin_seq_compl} \ref{it:cont_lin_seq_compl_1}.
Furthermore, the system of seminorms $(|\cd|_{\nu})_{\nu\in\mathcal{C}_{0}(\D)}$ given by
\[
|f|_{\nu}\coloneqq\sup_{z\in\D}|f(z)\nu(z)|,\quad f\in H^{\infty},
\]
for $\nu\in\mathcal{C}_{0}(\D)$ induces $\gamma(\|\cd\|_{\infty},\tau_{\operatorname{co}})$ 
by \cite[p.~227]{cooper1978}. Here, $\mathcal{C}_{0}(\D)$ denotes the space of $\C$-valued continuous functions 
on $\D$ that vanish at infinity. 
 
The composition semigroup $(T(t))_{t\geq 0}$ given by 
\[
T(t)f(z)\coloneqq f(\e^{\mathsf{i} t}z),\quad t\geq 0,\,f\in H^{\infty},\,z\in\D ,
\]
is strongly continuous and locally equicontinuous w.r.t~$\gamma(\|\cd\|_{\infty},\tau_{\operatorname{co}})$ 
by \cite[7.3 Corollary, p.~42]{kruse2024b} and clearly $2\pi$-periodic. 
We note that this semigroup is not strongly continuous w.r.t.~$\|\cd\|_{\infty}$ by 
\cite[Theorem 1.1, p.~844]{anderson2017}. Due to \cite[7.4 Theorem (c), p.~42--43]{kruse2024b} 
its generator $(A,D(A))$ fulfils $Af(z)=\mathsf{i}zf'(z)$, $z\in\D$, for $f\in D(A)$ and 
\[
D(A)=\{f\in H^{\infty}\;|\;(z\mapsto \mathsf{i}zf'(z))\in H^{\infty}\}.
\]
We claim that $\sigma_{\p}(A)=\mathsf{i}\N_{0}$. First, we observe that the monomials $f_n\colon \D\to \C$, 
$f_n(z)\coloneqq z^n$, belong to $D(A)$ for every $n\in\N_0$. Then we have $Af_{n}(z)=\mathsf{i}nf_{n}(z)$ for all 
$z\in\D$, so $\mathsf{i} n\in \sigma_{\p}(A)$ for every $n\in\N_0$. Second, let us take a look at the 
converse inclusion ``$\subseteq$''. Let $\lambda\in\sigma_{\p}(A)$ and $f\in D(A)$, $f\neq 0$, such that $Af=\lambda f$. 
Thus $\mathsf{i}zf'(z)=\lambda f(z)$ for all $z\in\D$. By taking derivatives on both sides and induction 
we get $\mathsf{i}zf^{(k)}(z)=(\lambda-(k-1)\mathsf{i})f^{(k-1)}(z)$ for all $z\in\D$ and $k\in\N$. 
%k=1: clear, 
%\begin{align*}
%\mathsf{i}zf^{(k+1)}(z)&=(\mathsf{i}zf^{(k)}(z))'-\mathsf{i}f^{(k)}(z)=((\lambda-(k-1)\mathsf{i})f^{(k-1)}(z))'-\mathsf{i}f^{(k)}(z)\\&=(\lambda-(k-1)\mathsf{i})f^{(k)}(z)-\mathsf{i}f^{(k)}(z)=(\lambda-k\mathsf{i})f^{(k)}(z)
%\end{align*}
Evaluating this equation at $z=0$, we obtain that $\lambda=(k-1)\mathsf{i}$ for some $k\in\N$ or 
$f^{(k-1)}(0)=0$ for all $k\in\N$. However, the latter case implies that $f=0$ on $\D$ since $f$ is holomorphic, 
which is a contradiction. Hence there is some $k\in\N$ such that $\lambda=(k-1)\mathsf{i}$, proving our claim.
By \prettyref{prop:resolvent_periodic} \ref{it:res_per_5} 
we get that $\sigma^{\ast}(A)=\sigma_{\p}(A)=\mathsf{i}\N_{0}$. 
The eigenspaces of $A$ are $\ker(\mathsf{i}n-A)=\operatorname{span}\{f_n\}$ for all $n\in\N_0$, 
so in particular one-dimensional. Indeed, we already know that the inclusion ``$\supseteq$'' holds. We turn 
to the converse inclusion ``$\subseteq$''. Let $n\in\N_{0}$ and $f\in \ker(\mathsf{i}n-A)$. 
By \prettyref{prop:resolvent_periodic} \ref{it:res_per_4} we have $\ker(\mathsf{i}n-A)=\ran(P_n)$ 
with $P_n$ from \prettyref{prop:resolvent_periodic} \ref{it:res_per_2}. Thus there 
is $g\in H^{\infty}$ such that $f=P_n g$.
Since $g$ is holmorphic on $\D$, it has the locally uniformly convergent power series representation 
\[
g(z)=\sum_{k=0}^{\infty}\frac{g^{(k)}(0)}{k!}z^{k},\quad z\in\D.
\]
 This implies by \prettyref{prop:resolvent_periodic} \ref{it:res_per_2} that 
\begin{align*}
  2\pi P_n g(z) %=2\pi \delta_{z}(P_n g) %% pointwise evaluation of Riemman integral in (H^{\infty},\gamma ) justified since \delta_z is continuous linear functional on this space
&=\int_{0}^{2\pi}\e^{-\mathsf{i}n s}g(\e^{\mathsf{i}s}z)\d s
 =\int_{0}^{2\pi}\sum_{k=0}^{\infty}\frac{g^{(k)}(0)}{k!}\e^{-\mathsf{i}n s}\e^{\mathsf{i}ks}z^{k}\d s \\
 %=\int_{0}^{2\pi}\sum_{k=0}^{\infty}\frac{g^{(k)}(0)}{k!}\e^{\mathsf{i}ks}\bigl(\e^{-\mathsf{i}\frac{n}{k} s}z\Bigr)^{k}\d s %% justifies the interchange of int and series because of local unifrom convergence since \e^{-\mathsf{i}\frac{n}{k} s}z moves along a circle with radius |z|<1 for s\in [0,2\pi]
&=\sum_{k=0}^{\infty}\frac{g^{(k)}(0)}{k!}\int_{0}^{2\pi}\e^{\mathsf{i}(k-n)s}\d s\,z^{k}
 =2\pi \frac{g^{(n)}(0)}{n!} z^{n}
\end{align*}
for all $z\in \D$ where the swap of the integral and the series in the third equation is justified by 
the locally uniform convergence of the power series of $g$. Hence we have $f=P_n g= \frac{g^{(n)}(0)}{n!} f_n$, 
proving our claim.
\end{exa}
 
The preceding results allow us now to fully characterise periodic strongly continuous locally equicontinuous 
semigroups. The proof of this characterisation is a modification of the one of 
\cite[Chap.~IV, 2.26 Theorem, p.~267]{engel_nagel2000}, which covers the case of Banach spaces.

\begin{thm}\label{thm:nec_suff_cond_periodic}
Let $X$ be a sequentially complete Hausdorff locally convex space 
and $(T(t))_{t\geq 0}$ a strongly continuous locally equicontinuous semigroup on $X$ with generator $(A,D(A))$. 
Then the following assertions are equivalent.
\begin{enumerate}[label=\upshape(\alph*), leftmargin=*]
\item \label{it:period_equiv_1} $(T(t))_{t\geq 0}$ is periodic. 
\item \label{it:period_equiv_2} $\sigma^{\ast}(A)=\sigma_{\p}(A)\subseteq 2\pi \mathsf{i}\alpha\Z$ for some $\alpha>0$, and if $X\neq\{0\}$, then the corresponding eigenvectors span a dense subspace of $X$.
\end{enumerate}
\end{thm}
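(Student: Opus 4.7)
The plan is to prove the two directions separately; only (a) $\Rightarrow$ (b) requires substantive work. The direction (b) $\Rightarrow$ (a) is immediate from Proposition~\ref{prop:suff_cond_periodic}: its hypotheses on $\sigma_{\p}(A)$ and on the density of eigenvectors are precisely those of (b) (with the case $X=\{0\}$ being trivial), and it yields periodicity with period at most $1/\alpha$.

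For (a) $\Rightarrow$ (b), let $\rho$ denote the period and assume first $\rho>0$ and $X\neq\{0\}$. Proposition~\ref{prop:resolvent_periodic} \ref{it:res_per_5} immediately gives $\sigma^{\ast}(A)=\sigma_{\p}(A)\subseteq (2\pi\mathsf{i}/\rho)\Z$, so we take $\alpha\coloneqq 1/\rho$. For the density statement, let $M$ be the linear span of $\bigcup_{n\in\Z}\ker(\mu_n-A)$ with $\mu_n\coloneqq 2\pi\mathsf{i}n/\rho$ and argue via Hahn--Banach. If $x'\in X'$ annihilates $M$, then by Proposition~\ref{prop:resolvent_periodic} \ref{it:res_per_4} we have $\langle x',P_ny\rangle=0$ for every $n\in\Z$ and $y\in X$, and pulling $x'$ through the Riemann integral representation of $P_n$ from Proposition~\ref{prop:resolvent_periodic} \ref{it:res_per_2} translates this into
\[
\int_{0}^{\rho}\e^{-\mu_n s}\langle x',T(s)y\rangle\d s=0,\quad n\in\Z,
\]
for every $y\in X$. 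Hence the $\rho$-periodic continuous $\C$-valued map $s\mapsto\langle x',T(s)y\rangle$ has all its Fourier coefficients equal to zero and so vanishes identically by the classical uniqueness theorem for Fourier series; evaluating at $s=0$ yields $\langle x',y\rangle=0$ for every $y\in X$, so $x'=0$ and $M$ is dense.

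The degenerate case $\rho=0$ (permitted by Proposition~\ref{prop:natural_period}) is handled by a short auxiliary argument: for each $\varepsilon>0$ pick $t_{\varepsilon}\in(0,\varepsilon)$ with $T(t_{\varepsilon})=\id$, write $s=k_{\varepsilon}t_{\varepsilon}+r_{\varepsilon}$ with $r_{\varepsilon}\in[0,t_{\varepsilon})$, and exploit the semigroup law to obtain $T(s)=T(r_{\varepsilon})$; letting $\varepsilon\to 0^{+}$ and invoking strong continuity forces $T(s)=\id$ for all $s\geq 0$, whence $A=0$ and (b) holds with any $\alpha>0$. The main obstacle is the density step in the case $\rho>0$: passing to scalar-valued Fourier series via Hahn--Banach is what makes it tractable, since a direct vector-valued Fej\'er-type approximation of $T(\cd)x$ in the locally convex topology of $X$ would require uniform-in-seminorm estimates that are not readily available from the tools in the paper.
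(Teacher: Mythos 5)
Your proposal is correct and follows essentially the same route as the paper: (b) $\Rightarrow$ (a) is delegated to Proposition~\ref{prop:suff_cond_periodic}, and (a) $\Rightarrow$ (b) combines Proposition~\ref{prop:resolvent_periodic} with a Hahn--Banach duality argument identifying $\langle x',P_n y\rangle$ with the Fourier coefficients of the periodic scalar function $s\mapsto\langle x',T(s)y\rangle$. The only (harmless) deviations are that you invoke the uniqueness theorem for Fourier coefficients of merely continuous periodic functions for arbitrary $y\in X$, whereas the paper restricts to $y\in D(A)$, uses pointwise convergence of the Fourier series of the resulting continuously differentiable function and then the density of $D(A)$, and that you spell out the degenerate case $\rho=0$, which the paper treats as immediate.
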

\begin{proof}
``\ref{it:period_equiv_2}$\Rightarrow$\ref{it:period_equiv_1}'' This implication follows 
from \prettyref{prop:suff_cond_periodic}.

``\ref{it:period_equiv_1}$\Rightarrow$\ref{it:period_equiv_2}'' Let  $(T(t))_{t\geq 0}$ be periodic with period 
$\rho\geq 0$. If $\rho=0$, then we have by \prettyref{prop:period_zero} that 
$T(t)=\id$ for all $t\geq 0$, $A=0$ with $D(A)=X$ and $\ker(A)=X$ as well as 
$\sigma^{\ast}(A)=\sigma_{\p}(A)=\{0\}$ if $X\neq\{0\}$, and all spectra are empty if $X=\{0\}$. 
Thus \ref{it:period_equiv_2} holds for any $\alpha>0$. 
Now, let us consider the case $\rho>0$. In particular, this yields that $X\neq\{0\}$. 
Due to \prettyref{prop:resolvent_periodic} \ref{it:res_per_4} 
and \ref{it:res_per_5} we can choose $\alpha\coloneqq \frac{1}{\rho}$ and it is only left to show that 
\[
\overline{\operatorname{span}}\left(\bigcup_{n\in\Z}P_n X\right)=X
\]
with $P_n$ from \prettyref{prop:resolvent_periodic} \ref{it:res_per_2}. 
Suppose that the span above is not dense in $X$. Then there is $x'\in X'$, $x'\neq 0$, such that $x'(P_n x)=0$ 
for all $x\in X$ and $n\in\Z$ by the bipolar theorem. W.l.o.g~we may assume that $\rho=2\pi$ 
(otherwise we consider the $2\pi$-periodic semigroup $(T(\frac{\rho}{2\pi}t))_{t\geq 0}$ instead, 
see \prettyref{prop:rescale_sg} with $\lambda\coloneqq 0$ and $c\coloneqq\frac{\rho}{2\pi}$, 
which fulfils $\sigma_{\p}(cA)=c\sigma_{\p}(A)$ and $\ker(c\mu-cA)=\ker(\mu-A)$ for all $\mu\in\sigma_{\p}(A)$). 
Using \prettyref{rem:periodic_strongly} \ref{it:period_strong_1} and \ref{it:period_strong_2}, 
we define for $x\in D(A)$ the $2\pi$-periodic function $f_{x,x'}\colon\R\to\C$, $f_{x,x'}(t)\coloneqq x'(T(t)x)$, 
which is continuously differentiable by \cite[Proposition 1.2 (1), p.~260]{komura1968}. 
Then $f_{x,x'}$ coincides with its Fourier series (see e.g.~\cite[\S 23, Satz 3, p.~321]{forster2016}) and we have by 
\prettyref{prop:resolvent_periodic} \ref{it:res_per_2} that
\[
 f_{x,x'}(t)
=\sum_{n\in\Z}\biggl(\frac{1}{2\pi}\int_{0}^{2\pi}\e^{-\mathsf{i}ns}x'(T(s)x)\d s\biggr)\e^{\mathsf{i}nt}
=\sum_{n\in\Z}x'(P_n x) \e^{\mathsf{i}nt}
=0
\]
for all $t\in\R$ and the series converges uniformly. Since $D(A)$ is dense in $X$ by 
\cite[Proposition 1.3, p.~261]{komura1968} and $x'\neq 0$, there is $\widetilde{x}\in D(A)$ such that 
$x'(\widetilde{x})\neq 0$. However, this implies that 
\[
0=f_{\widetilde{x},x'}(0)=x'(\widetilde{x})\neq 0,
\]
which is a contradiction.
\end{proof}

Next, \prettyref{prop:resolvent_periodic} and \prettyref{thm:nec_suff_cond_periodic} enable us to lift 
\cite[Chap.~IV, 2.27 Theorem, p.~267]{engel_nagel2000} from Banach spaces to quasi-complete 
Hausdorff locally convex spaces.

\begin{thm}\label{thm:fourier_expansion}
Let $X$ be a quasi-complete Hausdorff locally convex space and $(T(t))_{t\geq 0}$ a periodic strongly continuous 
locally equicontinuous semigroup on $X$ with period $\rho>0$ and generator $(A,D(A))$. 
Then we have for every $x\in D(A)$ that the sequence $(P_n x)_{n\in\Z}$ is summable to $x$, so $x=\sum_{n\in\Z}P_n x$, 
with $\mu_n =\frac{2\pi \mathsf{i}n}{\rho}$ and
\[
P_n x=\frac{1}{\rho}\int_{0}^{\rho}\e^{-\mu_n s}T(s)x\d s, \quad n\in\Z,\,x\in X, 
\]
from \prettyref{prop:resolvent_periodic} \ref{it:res_per_2}. In particular, we have
\begin{align}
T(t)x&=\sum_{n\in\Z}\e^{\mu_n t}P_n x, && x\in D(A),\,t\geq 0,\label{eq:fourier_1}\\
Ax&=\sum_{n\in\Z}\mu_n P_n x, && x\in D(A^2).\label{eq:fourier_2}
\end{align}
\end{thm}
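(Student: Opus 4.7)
The plan is to establish the symmetric-partial-sum identity $x = \sum_{n\in\Z} P_n x$ for every $x \in D(A)$ first, and then to deduce \eqref{eq:fourier_1} and \eqref{eq:fourier_2} from it. Following the Banach-space blueprint of \cite[Chap.~IV, 2.27 Theorem, p.~267]{engel_nagel2000}, I would represent the symmetric partial sums $S_N x \coloneqq \sum_{|n|\le N} P_n x$ as Dirichlet-kernel integrals, using the formula from \prettyref{prop:resolvent_periodic} \ref{it:res_per_2}:
\[
S_N x = \frac{1}{\rho}\int_0^\rho D_N(s)\,T(s)x\,\d s,\qquad D_N(s)\coloneqq \sum_{|n|\le N}\e^{-\mu_n s},
\]
understood as a Riemann integral of an $X$-valued continuous function in the quasi-complete space $X$.

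The core step is to show that $S_N x - x \to 0$ in $X$ for $x \in D(A)$. To that end I would substitute $T(s)x = x + \int_0^s T(u)Ax\,\d u$ (a consequence of \eqref{eq:rescale_iden_2} with $\lambda = 0$) and apply Fubini for vector-valued Riemann integrals of jointly continuous integrands to obtain
\[
S_N x - x = \frac{1}{\rho}\int_0^\rho T(u)Ax\cdot \Phi_N(u)\,\d u,\qquad \Phi_N(u) \coloneqq \int_u^\rho D_N(s)\,\d s.
\]
Using $\sum_{1\le |n|\le N}1/\mu_n = 0$ (by $\mu_{-n} = -\mu_n$) an explicit calculation yields $\Phi_N(u) = (\rho-u)+\sum_{1\le |n|\le N}\e^{-\mu_n u}/\mu_n$, the second term being the $N$-th symmetric partial Fourier sum of the sawtooth $u\mapsto u - \rho/2$ on $[0,\rho]$. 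Periodicity gives $\int_0^\rho T(u)Ax\,\d u = T(\rho)x - x = 0$, so I may replace $\Phi_N$ by $\Phi_N - \rho/2$ in the previous display without changing the value. The classical scalar Dirichlet--Jordan theorem then ensures $\Phi_N \to \rho/2$ uniformly on compact subsets of $(0,\rho)$, together with the standard uniform boundedness $\sup_{N,u}|\Phi_N(u)| < \infty$ of Fourier partial sums of a function of bounded variation. Combined with local equicontinuity (giving $p \in \Gamma_X$, $C\ge 0$ with $q(T(u)Ax) \le C p(Ax)$ for $u\in[0,\rho]$ and $q\in\Gamma_X$), a standard split of the integral into tails $[0,\varepsilon]\cup[\rho-\varepsilon,\rho]$ and middle part $[\varepsilon,\rho-\varepsilon]$ yields $q(S_N x - x)\to 0$ for each $q\in\Gamma_X$.

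The two remaining identities fall out of this. For \eqref{eq:fourier_1}, since $T(t)x \in D(A)$, applying the already-proved summability to $T(t)x$ and using the change-of-variables identity $P_n(T(t)x) = \e^{\mu_n t} P_n x$ (immediate from the $\rho$-periodicity of $s\mapsto \e^{-\mu_n s}T(s)x$) produces $T(t)x = \sum_n \e^{\mu_n t} P_n x$. For \eqref{eq:fourier_2}, since $x\in D(A^2)$ gives $Ax \in D(A)$, applying summability to $Ax$ together with $P_n(Ax) = \mu_n P_n x$ (obtained by setting $t = \rho$ in \eqref{eq:rescale_iden_2} with $\lambda = \mu_n$ and invoking $T(\rho)x = x$) yields $Ax = \sum_n \mu_n P_n x$.

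The hard part will be making the ``dominated-convergence'' passage $\frac{1}{\rho}\int_0^\rho T(u)Ax\,(\Phi_N(u) - \rho/2)\,\d u \to 0$ rigorous in the locally convex setting, because Riemann integration in an LCS carries no automatic Lebesgue-style dominated-convergence theorem. The way around this is to argue seminorm by seminorm, exploiting the uniform boundedness of $(\Phi_N)_{N\in\N}$ combined with its uniform convergence to $\rho/2$ on each closed subinterval of $(0,\rho)$; quasi-completeness of $X$ is what ensures the existence of the relevant Riemann integrals of continuous $X$-valued functions against bounded scalar weights, and permits the clean $\varepsilon$-$N$ tails-plus-middle estimate to go through.
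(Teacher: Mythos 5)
Your Dirichlet-kernel computation is essentially sound as far as it goes: the identity $S_Nx-x=\frac{1}{\rho}\int_0^\rho T(u)Ax\,(\Phi_N(u)-\rho/2)\,\d u$, the cancellation via $\int_0^\rho T(u)Ax\,\d u=T(\rho)x-x=0$, and the tails-plus-middle estimate using uniform boundedness and locally uniform convergence of the sawtooth partial sums all work in the locally convex setting, seminorm by seminorm. This is a genuinely different route from the paper, which never touches Dirichlet kernels.

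The gap is that you prove only convergence of the \emph{symmetric} partial sums $\sum_{|n|\le N}P_nx$, whereas the theorem asserts that the family $(P_nx)_{n\in\Z}$ is \emph{summable}, i.e.\ that the net $\bigl(\sum_{n\in M}P_nx\bigr)_{M}$ over all finite subsets $M\subseteq\Z$ converges (the paper cites Schaefer for exactly this notion, and its proof explicitly verifies that this net is a bounded Cauchy net). Your method cannot yield this: the Dirichlet--Jordan theorem is intrinsically a statement about symmetric partial sums, and the sawtooth coefficients $1/\mu_n$ are not absolutely summable, so nothing in your estimate controls asymmetric or sparse partial sums. The paper closes this by a different mechanism: writing $P_nx=(\mathsf{i}n)^{-1}P_n(Ax)$ (after normalising $\rho=2\pi$), expressing $q\bigl(\sum_{n\in M}P_nx\bigr)=\sup_{x'\in U_q^\circ}\bigl|\sum_{n\in M}(\mathsf{i}n)^{-1}x'(P_nAx)\bigr|$ via the polar of the seminorm ball, and combining Cauchy--Schwarz with Bessel's inequality for the scalar function $s\mapsto x'(T(s)Ax)$; this gives a bound $\bigl(\sum_{n\in M}n^{-2}\bigr)^{1/2}\sup_{s\in[0,2\pi]}q(T(s)Ax)$, uniform in $x'$, which makes the net of finite partial sums Cauchy and bounded, hence convergent by quasi-completeness. (The paper then identifies the limit with $x$ by comparing Fourier coefficients of $t\mapsto x'(T(t)z)$ and $t\mapsto x'(T(t)x)$ and invoking Carleson; your Dirichlet argument could equally serve for that identification step once summability is in hand.) To repair your proof you would need to add this Bessel--Cauchy--Schwarz step, or else weaken the conclusion to symmetric convergence — in which case \eqref{eq:fourier_1} and \eqref{eq:fourier_2} would also only be obtained as symmetric limits. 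Your derivations of $P_n(T(t)x)=\e^{\mu_nt}P_nx$ and $P_n(Ax)=\mu_nP_nx$ are fine.
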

\begin{proof}
W.l.o.g.~we may assume that $\rho=2\pi$ (see the proof of \prettyref{thm:nec_suff_cond_periodic}). 
Let $x\in D(A)$ and set $y\coloneqq Ax$. We start with showing that $(P_n x)_{n\in\Z}$ is summable to $x$. 
Due to Equations \eqref{eq:rescale_iden_2} and \eqref{eq:res_per_4} from \prettyref{prop:rescale_identities} 
and from the proof of \prettyref{prop:resolvent_periodic} \ref{it:res_per_4}, respectively, we observe that 
\begin{equation}\label{eq:fourier_exp_1}
 P_n y
=P_n Ax
\underset{\mathclap{\eqref{eq:rescale_iden_2}}}{=}AP_n x
\underset{\mathclap{\eqref{eq:res_per_4}}}{=}\,\mu_n P_n x
=\mathsf{i}n P_n x.
\end{equation}
Furthermore, if $\mu_n\in\sigma_{\p}(A)\subseteq\frac{2\pi\mathsf{i}}{\rho}\Z$, 
then it follows from \prettyref{prop:resolvent_periodic} \ref{it:res_per_4} 
and the proof of \prettyref{prop:suff_cond_periodic} with $\alpha\coloneqq\frac{1}{\rho}$ that
\begin{equation}\label{eq:fourier_exp_2} 
T(t)P_n w=\e^{\mu_n t}P_n w=\e^{\mathsf{i}n t}P_n w
\end{equation}
for all $w\in X$. The same equality holds if $\mu_n\not\in \sigma_{\p}(A)$ because then $P_n w=0$ for all $w\in X$ 
by \prettyref{prop:resolvent_periodic} \ref{it:res_per_4}.

Let $M\subset\Z\setminus\{0\}$ be finite and $q\in\Gamma_X$. We set $U_{q}\coloneqq\{x\in X\;|\;q(x)<1\}$ and 
denote by $U_{q}^{\circ}\subseteq X'$ the polar of $U_q$. For all $x'\in U_{q}^{\circ}$ we have by the 
Cauchy--Schwarz inequality that
\begin{align*}
 \bigl|x'\bigl(\sum_{n\in M}P_n x\bigr)\bigr|
&=\bigl|\sum_{n\in M}x'(P_n x)\bigr|
\,\underset{\mathclap{\eqref{eq:fourier_exp_1}}}{=}\,\bigl|\sum_{n\in M}(\mathsf{i}n)^{-1}x'(P_n y)\bigr|\\
&\leq \bigl(\sum_{n\in M}n^{-2}\bigr)^{\frac{1}{2}}\bigl(\sum_{n\in M}|x'(P_n y)|^{2}\bigr)^{\frac{1}{2}}.
\end{align*}
Let us turn to the second factor on the right-hand side. By applying the Bessel inequality to the 
$2\pi$-periodic continuous function $f\colon\R\to\C$, $f(t)\coloneqq x'(T(t)y)$, whose Fourier 
coefficients are given by $x'(P_n y)$, $n\in\Z$, due to \prettyref{prop:resolvent_periodic} \ref{it:res_per_2}, 
we obtain
\[
    \sum_{n\in M}|x'(P_n y)|^{2}
\leq\frac{1}{2\pi}\int_{0}^{2\pi}|x'(T(s)y)|^2\d s
\leq \sup_{s\in[0,2\pi]}|x'(T(s)y)|^2 .
\]
We deduce that
\begin{align*}
  q\bigl(\sum_{n\in M}P_n x\bigr)
&=\sup_{x'\in U_{q}^{\circ}}\bigl|x'\bigl(\sum_{n\in M}P_n x\bigr)\bigr|
 \leq\bigl(\sum_{n\in M}n^{-2}\bigr)^{\frac{1}{2}}\sup_{x'\in U_{q}^{\circ}}\sup_{s\in[0,2\pi]}|x'(T(s)y)|\\
&=\bigl(\sum_{n\in M}n^{-2}\bigr)^{\frac{1}{2}}\sup_{s\in[0,2\pi]}q(T(s)y)
\end{align*}
where we used \cite[Proposition 22.14, p.~256]{meisevogt1997} in the first and last equation to get from $q$ to 
$\sup_{x'\in U_{q}^{\circ}}$ and back. Denoting by $F(\Z)$ the family of finite subsets of $\Z$, 
this estimate implies that the net $(\sum_{n\in M}P_n x)_{M\in F(\Z)}$ is a bounded Cauchy net in $X$ and so 
convergent since $X$ is quasi-complete. Therefore $(P_n x)_{n\in\Z}$ is summable (see \cite[p.~120]{schaefer1971})
and so $z\coloneqq \sum_{n\in\Z}P_n x \in X$.

Next, we show that $z=x$. Let $x'\in X'$. By the proof of \prettyref{thm:nec_suff_cond_periodic}
the $2\pi$-periodic continuously differentiable function $f_{x,x'}\colon\R\to\C$, $f_{x,x'}(t)\coloneqq x'(T(t)x)$, 
coincides with its Fourier series and its Fourier coefficients are given by $x'(P_n x)$ for $n\in\Z$. 
Further, for the $2\pi$-periodic continuous function $f_{z,x'}\colon\R\to\C$, $f_{z,x'}(t)\coloneqq x'(T(t)z)$, 
its Fourier coefficients $c_n$, $n\in\Z$, fulfil
\begin{align*}
 c_n
&=\frac{1}{2\pi}\int_{0}^{2\pi}\e^{-\mathsf{i}ns}x'(T(s)z)\d s
 =\frac{1}{2\pi}\int_{0}^{2\pi}\e^{-\mathsf{i}ns}\sum_{k\in\Z}x'(T(s)P_k x)\d s\\
&\underset{\mathclap{\eqref{eq:fourier_exp_2}}}{=}\,\sum_{k\in\Z}\frac{1}{2\pi}\int_{0}^{2\pi}\e^{\mathsf{i}(k-n)s}x'(P_k x)\d s
=x'(P_n x) .
\end{align*}
Hence the functions $f_{z,x'}$ and $f_{x,x'}$ have the same Fourier coefficients and by Carleson's theorem we get 
$f_{z,x'}(t)=f_{x,x'}(t)$ for Lebesgue-almost every $t\in\R$. Since both functions are continuous, 
they actually coincide for every $t\in\R$, which implies 
\[
x'(z)=f_{z,x'}(0)=f_{x,x'}(0)=x'(x)
\]
for all $x'\in X'$. Thus we have $z=x$ by the Hahn--Banach theorem, which means 
\begin{equation}\label{eq:fourier_exp_3} 
x=\sum_{n\in\Z}P_n x . 
\end{equation}
Noting that $T(t)x\in D(A)$ by \cite[Proposition 1.2 (1), p.~260]{komura1968} if $x\in D(A)$, 
and $Ax\in D(A)$ if $x\in D(A^2)$, we obtain the identities \eqref{eq:fourier_1} and \eqref{eq:fourier_2} 
by replacing $x$ by $T(t)x$ and $Ax$ in \prettyref{eq:fourier_exp_3}, respectively, 
and using \eqref{eq:fourier_exp_2} and \eqref{eq:fourier_exp_1}. % and $P_n T(t)x=T(t)P_n x$
\end{proof}

\prettyref{thm:fourier_expansion} allows us to generalise and refine 
\cite[Chap.~IV, 2.28 Corollary, p.~269]{engel_nagel2000} next.

\begin{cor}\label{cor:fourier_expansion}
Let $X$ be a quasi-complete Hausdorff locally convex space, $t_0>0$ and 
$(T(t))_{t\geq 0}$ a family of maps from $X$ to $X$. Then the following two assertions are equivalent.
\begin{enumerate}[label=\upshape(\alph*), leftmargin=*]
\item \label{it:fourier_1} $(T(t))_{t\geq 0}$ is a periodic strongly continuous locally equicontinuous semigroup on $X$ with generator 
$(A,D(A))$ such that $D(A)=X$, $\sigma_{\p}(A)$ is bounded and $(T(t))_{t\geq 0}$ has period $\rho=\frac{t_0}{k}$ for some $k\in\N$.
\item \label{it:fourier_2} There are $m\in\N$ and projections $P_{n}\in\cL(X)$, $-m\leq n\leq m$, such that
\begin{enumerate}[label=\upshape(\roman*), leftmargin=*, widest=iii]
\item \label{it:fourier_20} $P_{n}P_{j}=0$ for all $-m\leq n,j\leq m$, $j\neq n$,
\item \label{it:fourier_21} $P_{-m}\neq 0$ or $P_{m}\neq 0$,
\item \label{it:fourier_22} $\sum_{n=-m}^{m}P_{n}=\id$, and
\item \label{it:fourier_23} $T(t)=\sum_{n=-m}^{m}\e^{\frac{2\pi\mathsf{i}nt}{t_0}}P_n$ for all $t\geq 0$.
\end{enumerate}
\end{enumerate}
If one of the two equivalent assertions is fulfilled, then $A =\sum_{n=-m}^{m}\frac{2\pi\mathsf{i}n}{t_0}P_n \in\cL(X)$.
\end{cor}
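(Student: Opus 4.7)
The plan is to handle the two implications separately, with the explicit formula for $A$ falling out of differentiating the finite-sum representation of $T(t)$ at $t=0$ in either direction.

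For the implication \ref{it:fourier_1}$\Rightarrow$\ref{it:fourier_2}, I would first invoke \prettyref{thm:nec_suff_cond_periodic} together with \prettyref{prop:resolvent_periodic} \ref{it:res_per_5} to get $\sigma_{\p}(A)=\sigma^{\ast}(A)\subseteq\frac{2\pi\mathsf{i}}{\rho}\Z=\frac{2\pi\mathsf{i}k}{t_{0}}\Z$. Since $\sigma_{\p}(A)$ is assumed bounded, it is in fact finite. Let $P_{j}^{\rho}$ denote the projections from \prettyref{prop:resolvent_periodic} \ref{it:res_per_2} attached to $\frac{2\pi\mathsf{i}j}{\rho}=\frac{2\pi\mathsf{i}jk}{t_{0}}$; by \prettyref{prop:resolvent_periodic} \ref{it:res_per_4}, $P_{j}^{\rho}=0$ unless $\frac{2\pi\mathsf{i}j}{\rho}\in\sigma_{\p}(A)$, so only finitely many of them are nonzero. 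I would reindex by setting $P_{n}\coloneqq P_{n/k}^{\rho}$ when $k\mid n$ and $P_{n}\coloneqq 0$ otherwise, and let $m$ be the largest $|n|$ with $P_{n}\neq 0$. The orthogonality (i) would follow from the explicit integral formula for $P_{n}^{\rho}$ combined with $T(s)P_{j}^{\rho}x=\e^{\frac{2\pi\mathsf{i}js}{\rho}}P_{j}^{\rho}x$ (a direct consequence of $\ran P_{j}^{\rho}\subseteq\ker(\frac{2\pi\mathsf{i}j}{\rho}-A)$), which collapses the integral to a Kronecker delta, while (iii) and (iv) follow by applying \prettyref{thm:fourier_expansion} to $x\in D(A)=X$ and converting the a priori infinite series into a finite sum via the reindexing. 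Condition (ii), namely $m\geq 1$, is forced because $A=0$ would imply $T(t)=\id$ for all $t$ and hence $\rho=0$, contradicting $\rho=t_{0}/k>0$.

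For the converse \ref{it:fourier_2}$\Rightarrow$\ref{it:fourier_1}, I would define $T(t)\coloneqq \sum_{n=-m}^{m}\e^{\frac{2\pi\mathsf{i}nt}{t_{0}}}P_{n}$ and verify the required properties by direct finite-sum manipulations. The semigroup law follows by expanding $T(t)T(s)$ and collapsing cross terms using (i) and $P_{n}^{2}=P_{n}$; $T(0)=\id$ is (iii); $T(t_{0})=\id$ is immediate from $\e^{2\pi\mathsf{i}n}=1$; strong continuity is clear; and (even global) equicontinuity follows from $|\e^{\frac{2\pi\mathsf{i}nt}{t_{0}}}|=1$ together with $P_{n}\in\cL(X)$. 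Differentiating at $t=0$ identifies the generator as $A=\sum_{n=-m}^{m}\frac{2\pi\mathsf{i}n}{t_{0}}P_{n}\in\cL(X)$ with $D(A)=X$, and $\sigma_{\p}(A)$ is then the finite set of $\frac{2\pi\mathsf{i}n}{t_{0}}$ with $P_{n}\neq 0$, which is bounded. The period is handled via \prettyref{prop:natural_period}, which yields $\rho\in\{0\}\cup\{t_{0}/k\;|\;k\in\N\}$; the case $\rho=0$ is ruled out because it would force $\e^{\frac{2\pi\mathsf{i}nt}{t_{0}}}P_{n}=P_{n}$ for all $t$ and $n$ by (i), hence $P_{n}=0$ for every $n\neq 0$, contradicting (ii) together with $m\geq 1$.

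The main obstacle I anticipate is purely bookkeeping: aligning the $\rho$-indexing native to \prettyref{prop:resolvent_periodic} and \prettyref{thm:fourier_expansion} with the $t_{0}$-indexing of the statement, and exploiting the boundedness of $\sigma_{\p}(A)$ to collapse the infinite Fourier expansion into a finite sum. Once this alignment is fixed, conditions (i)--(iv) and the explicit form of $A$ reduce to routine finite-sum computations.
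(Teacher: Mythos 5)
Your proposal is correct and follows essentially the same route as the paper: the forward implication via \prettyref{prop:resolvent_periodic} and \prettyref{thm:fourier_expansion} (with boundedness of $\sigma_{\p}(A)$ and $D(A)=X$ collapsing the Fourier expansion to a finite operator sum, and the same Kronecker-delta integral computation for orthogonality), and the converse by direct finite-sum verification plus \prettyref{prop:natural_period} to pin down the period. The only difference is that you make explicit the reindexing from the $\rho$-indexed projections $P^{\rho}_{j}$ (exponents $\frac{2\pi\mathsf{i}jk}{t_0}$) to the $t_0$-indexed ones required by \ref{it:fourier_23}, a bookkeeping step the paper's proof leaves implicit; this is a welcome clarification rather than a deviation.
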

\begin{proof} 
``\ref{it:fourier_1}$\Rightarrow$\ref{it:fourier_2}'' Let $P_n\in\cL(X)$ for $n\in\Z$ be the map from 
\prettyref{prop:resolvent_periodic} \ref{it:res_per_2}. 
By \prettyref{prop:resolvent_periodic} \ref{it:res_per_4} $P_n$ is a projection 
for all $n\in\Z$, and we have for all $n,j\in\Z$, $n\neq j$, and $x\in X$ by Equation \eqref{eq:fourier_exp_2} 
from the proof of \prettyref{thm:fourier_expansion} that 
\[
 P_{n}P_{j}x
=\frac{1}{\rho}\int_{0}^{\rho}\e^{-\mu_n s}T(s)P_j x\d s
\underset{\mathclap{\eqref{eq:fourier_exp_2}}}{=}\;\frac{1}{\rho}\int_{0}^{\rho}\e^{(\mu_j-\mu_n) s}\d s\,P_j x
=0.
\]
Now, the implication follows from \prettyref{thm:fourier_expansion} in combination with
\prettyref{prop:resolvent_periodic} \ref{it:res_per_4}, the assumption that $\sigma_{\p}(A)$ is bounded and 
$D(A)=X$. In particular, $D(A)=X$ and \eqref{eq:fourier_2} imply that there is $i\in\Z$, $i\neq 0$, 
such that $P_{i}\neq 0$ (if $i=0$ were the only $i\in\Z$ with $P_{i}\neq 0$, then $\rho=0$ 
by \prettyref{prop:period_zero}, which is contradiction). %relabel: $\widetilde{P}_{nk}\coloneqq P_n$ if $k\neq 1$

``\ref{it:fourier_2}$\Rightarrow$\ref{it:fourier_1}'' Let there be $m\in\N$ and projections $P_{n}\in\cL(X)$, 
$-m\leq n\leq m$, such that conditions \ref{it:fourier_20}--\ref{it:fourier_23} are fulfilled. 
Due to \ref{it:fourier_20}, \ref{it:fourier_22} and \ref{it:fourier_23} $(T(t))_{t\geq 0}$ is a periodic strongly 
continuous locally equicontinuous semigroup on $X$ with $T(t_0)=\id$ whose generator $(A,D(A))$ fulfils $D(A)=X$ 
and $A =\sum_{n=-m}^{m}\frac{2\pi\mathsf{i}n}{t_0}P_n$.  % \ref{it:fourier_20} for semigroup property
Moreover, we note that it follows from \ref{it:fourier_21} that there is $x\in X$ with $P_{-m}x\neq 0$ or 
$P_{m}x\neq 0$. W.l.o.g.~$P_{m}x\neq 0$. Then it holds that
\[
 AP_{m}x
=\sum_{n=-m}^{m}\frac{2\pi\mathsf{i}n}{t_0}P_n P_m x
\underset{\mathclap{\text{\ref{it:fourier_20}}}}{=}\frac{2\pi\mathsf{i}m}{t_0}P_m^2 x
=\frac{2\pi\mathsf{i}m}{t_0}P_m x\neq 0
\]
as $m\neq 0$, yielding that $(T(t))_{t\geq 0}$ has period $\rho>0$ by \prettyref{prop:period_zero}. 
Thus there is $k\in\N$ such that 
$\rho=\frac{t_0}{k}$ by \prettyref{prop:natural_period}. Suppose that there are $\lambda\in\C$, 
$\lambda\neq \frac{2\pi\mathsf{i}n}{t_0}$ for all $-m\leq n\leq m$, and $x\in X$ such that $Ax=\lambda x$. 
Then we have by \ref{it:fourier_22} that
\[
 0
=Ax-\lambda x
=\sum_{n=-m}^{m}\Bigl(\frac{2\pi\mathsf{i}n}{t_0}-\lambda\Bigr)P_n x,
\]
which implies 
\[
 0
=P_{j}\sum_{n=-m}^{m}\Bigl(\frac{2\pi\mathsf{i}n}{t_0}-\lambda\Bigr)P_n x
\underset{\mathclap{\text{\ref{it:fourier_20}}}}{=}\Bigl(\frac{2\pi\mathsf{i}j}{t_0}-\lambda\Bigr)P_j^2 x
=\Bigl(\frac{2\pi\mathsf{i}j}{t_0}-\lambda\Bigr)P_j x
\]
for all $-m\leq j\leq m$. Hence $P_j x=0$ for all $-m\leq j\leq m$ and so $x=0$ by \ref{it:fourier_22}. 
We conclude that $\lambda\notin\sigma_{\p}(A)$ and 
$\sigma_{\p}(A)\subseteq \{\frac{2\pi\mathsf{i}n}{t_0}\;|\;-m\leq n\leq m\}$. 
In particular, $\sigma_{\p}(A)$ is bounded. 
\end{proof}

\section{Spectral inclusion and mapping theorems}
\label{sect:spectral_mapping}

We begin our final section with spectral inclusion theorems that cover the known case \cite[Chap.~IV, 3.6 Spectral Inclusion Theorem, p.~276]{engel_nagel2000} in the setting of Banach spaces. 

\begin{thm}\label{thm:spec_incl}
Let $X$ be a sequentially complete Hausdorff locally convex space and $(T(t))_{t\geq 0}$ a strongly continuous 
semigroup on $X$ with generator $(A,D(A))$. 
Then the following assertions hold for all $t\geq 0$.
\begin{enumerate}[label=\upshape(\alph*), leftmargin=*]
\item \label{it:spec_incl_2} $\e^{t\sigma_{\alg}(A)}\subseteq \sigma_{\alg} (T(t))$,
\item \label{it:spec_incl_3} $\e^{t\sigma_{\p}(A)}\subseteq \sigma_{\p} (T(t))$,
\item \label{it:spec_incl_5} $\e^{t\sigma_{\res}(A)}\subseteq \sigma_{\res} (T(t))$.
\end{enumerate}
If in addition $(T(t))_{t\geq 0}$ is locally equicontinuous, then the following assertions hold for all $t\geq 0$.
\begin{enumerate}[label=\upshape(\alph*), leftmargin=*]\setcounter{enumi}{3}
\item \label{it:spec_incl_1} $\e^{t\sigma(A)}\subseteq \sigma (T(t))$,
\item \label{it:spec_incl_4} $\e^{t\sigma_{\ap}(A)}\subseteq \sigma_{\ap} (T(t))$,
\item \label{it:spec_incl_4a} $\e^{t\sigma_{\ap}^{\seq}(A)}\subseteq \sigma_{\ap}^{\seq} (T(t))$,
\item \label{it:spec_incl_4b} $\e^{t\sigma_{\bap}(A)}\subseteq \sigma_{\bap} (T(t))$,
\item \label{it:spec_incl_4c} $\e^{t\sigma_{\bap}^{\seq}(A)}\subseteq \sigma_{\bap}^{\seq} (T(t))$,
\item \label{it:spec_incl_6}  $\e^{t\sigma^{\ast}(A)}\subseteq \sigma^{\ast} (T(t))$.
\end{enumerate}
\end{thm}
\begin{proof} 
Let $t\geq 0$. We start with two observations. First, if $(T(s))_{s\geq 0}$ is locally equicontinuous, 
then for every $q\in\Gamma_X$ there are $p\in\Gamma_X$ and $C\geq 0$ such that for all $x\in X$ we have
\begin{equation}\label{eq:spectral_incl_1}
     q\Bigl( \int_{0}^{t}\e^{\lambda (t-s)}T(s)x \d s\Bigr)
\leq t\e^{|\re(\lambda)|t}\sup_{s\in [0,t]}q(T(s)x)
\leq Ct\e^{|\re(\lambda)|t}p(x).
\end{equation}
Second, if $\lambda\in\C$ is such that $\lambda-A$ and $\e^{\lambda t}-T(t)$ are bijective, then 
it follows for all $x\in X$ by Equation \eqref{eq:rescale_iden_2} from \prettyref{prop:rescale_identities} that
\begin{align*}
  \int_{0}^{t}\e^{\lambda (t-s)}T(s)x \d s
&=\int_{0}^{t}\e^{\lambda (t-s)}T(s)(\lambda - A)(\lambda - A)^{-1}x \d s \\
&\underset{\mathclap{\eqref{eq:rescale_iden_2}}}{=}\e^{\lambda t}(\lambda-A)^{-1}x-T(t)(\lambda-A)^{-1}x
=(\e^{\lambda t}-T(t))(\lambda-A)^{-1}x
\end{align*}
and thus
\begin{equation}\label{eq:spectral_incl_2}
 (\lambda-A)^{-1}x
=(\e^{\lambda t}-T(t))^{-1} \int_{0}^{t}\e^{\lambda (t-s)}T(s)x \d s.
\end{equation}
Now, let us turn to the proofs of the listed statements. 
 
\ref{it:spec_incl_3} Let $\lambda\in\sigma_{\p}(A)$. So, $\lambda-A$ is not injective. 
Then there is $x\in D(A)$, $x\neq 0$, such that 
$(\lambda-A)x=0$. By multiplying \eqref{eq:rescale_iden_2} with $\e^{\lambda t}$ this implies 
$(T(t)-\e^{\lambda t})x=0$ and so $\e^{\lambda t}-T(t)$ is not injective.

\ref{it:spec_incl_2} Let $\lambda\in\sigma_{\alg}(A)$ such that $\lambda-A$ is not surjective. 
Then there is $y\in X$ such that for all 
$z\in D(A)$ it holds that $(\lambda-A)z\neq y$. Since $\int_{0}^{t}\e^{\lambda (t-s)}T(s)x \d s\in D(A)$ for all 
$x\in X$ by \prettyref{prop:rescale_identities}, this yields that there is no 
$x\in X$ such that $(T(t)-\e^{\lambda t})x=y$ by multiplying \eqref{eq:rescale_iden_1} with 
$\e^{\lambda t}$. Thus $\e^{\lambda t}-T(t)$ is not surjective. Together with part \ref{it:spec_incl_3} this 
proves statement \ref{it:spec_incl_2}. 

\ref{it:spec_incl_1} Let $\lambda\in\sigma(A)$ such that $\lambda-A$ is bijective but 
$(\lambda-A)^{-1}\notin\mathcal{L}(X)$. We only need to consider the case that $\e^{\lambda t}-T(t)$ is bijective. 
Since $(\lambda-A)^{-1}\notin\mathcal{L}(X)$, there is a net $(x_{i})_{i\in I}$ in $X$ converging to $0$ such 
that $((\lambda-A)^{-1}x_{i})_{i\in I}$ does not converge to $0$. 
By the local equicontinuity of the semigroup and \eqref{eq:spectral_incl_1} the net 
$(\int_{0}^{t}\e^{\lambda (t-s)}T(s)x_{i} \d s)_{i\in I}$ converges to $0$ in $X$. 
Suppose that $(\e^{\lambda t}-T(t))^{-1}\in \mathcal{L}(X)$. Then $((\lambda-A)^{-1}x_{i})_{i\in I}$ 
converges to $0$ by \eqref{eq:spectral_incl_2}, 
which is a contradiction. Hence $(\e^{\lambda t}-T(t))^{-1}\notin \mathcal{L}(X)$. 
In combination with part \ref{it:spec_incl_2} this yields our statement.

\ref{it:spec_incl_5} Let $\lambda\in\sigma_{\res}(A)$. By multiplying Equation \eqref{eq:rescale_iden_1} 
from \prettyref{prop:rescale_identities} with $\e^{\lambda t}$, 
we see that 
\begin{equation}\label{eq:spectral_incl_3}
\ran(\e^{\lambda t}-T(t))\subseteq \ran(\lambda-A).
\end{equation}
Thus $\ran(\e^{\lambda t}-T(t))$ cannot be dense in $X$, which means that $\e^{\lambda t}\in\sigma_{\res}(T(t))$. 

\ref{it:spec_incl_4} Let $\lambda\in\sigma_{\ap}(A)$. Then there is a net $(x_i)_{i\in I}$ in $D(A)$ 
which does not converge to $0$ and fulfils $\lim_{i\in I}(A-\lambda)x_i=0$. 
Due to Equation \eqref{eq:rescale_iden_2} from \prettyref{prop:rescale_identities} we observe that
\[
 (\e^{\lambda t}-T(t))x_i
\underset{\mathclap{\eqref{eq:rescale_iden_2}}}{=}\int_{0}^{t}\e^{\lambda (t-s)}T(s)(\lambda-A)x_i \d s
\]
for $i\in I$. Let $q\in\Gamma_X$. By the local equicontinuity of the semigroup there are $p\in\Gamma_X$ and $C\geq 0$ 
such that for all $i\in I$
\[
 q((\e^{\lambda t}-T(t))x_i)\leq Ct\e^{|\re(\lambda)|t}p((\lambda-A)x_i)
\]
which we obtain from \eqref{eq:spectral_incl_1} by replacing $x$ by $(\lambda-A)x_i$. 
Hence $(T(t)x_i-\e^{\lambda t}x_i)_{i\in I}$ converges to $0$, implying $\e^{\lambda t}\in\sigma_{\ap}(T(t))$.

\ref{it:spec_incl_4a}, \ref{it:spec_incl_4b} and \ref{it:spec_incl_4c} These statements follow from the proof 
of part \ref{it:spec_incl_4}.

\ref{it:spec_incl_6} Let $\lambda\in\sigma^{\ast}(A)$. If $\lambda\in\sigma(A)$, then statement \ref{it:spec_incl_6} 
is covered by part \ref{it:spec_incl_1}. So, let us consider the case that 
$\lambda\in\sigma^{\ast}(A)\setminus\sigma(A)$. 
This means that for all $\delta>0$ such that $B(\lambda,\delta)\subseteq\rho(A)$ it holds that 
$\{R(\mu,A)\;|\;\mu\in B(\lambda,\delta)\}$ is not equicontinuous in $\cL(X)$. Suppose that 
$\e^{\lambda t}\in\rho^{\ast}(T(t))$. Then there is $\varepsilon >0$ such that 
$B(\e^{\lambda t},\varepsilon)\subseteq\rho(T(t))$ and
$\{R(\mu,T(t))\;|\;\mu\in B(\e^{\lambda t},\varepsilon)\}$ is equicontinuous in $\cL(X)$. 
Since the map $f\colon\C\to\C$, $f(z)\coloneqq\e^{zt}$, is continuous there is $\delta_\varepsilon >0$ such that 
for all $\mu\in B(\lambda,\delta_\varepsilon )$ it holds that $\e^{\mu t}\in B(\e^{\lambda t},\varepsilon)$. 
Let $\mu\in B(\lambda,\delta_\varepsilon )$. It follows from multiplying Equation \eqref{eq:rescale_iden_1} 
from \prettyref{prop:rescale_identities} with $\e^{\mu t}$ 
and replacing $\lambda$ by $\mu$ that 
\[
 (\e^{\mu t}-T(t))x
=(\mu-A)\int_{0}^{t}\e^{\mu (t-s)}T(s)x \d s
\]
for all $x\in X$. Since $\e^{\mu t}\in B(\e^{\lambda t},\varepsilon)\subseteq\rho(T(t))$, we know that $\e^{\mu t}-T(t)$ 
is invertible. By replacing in the equality above $x$ by $R(\e^{\mu t},T(t))x=(\e^{\mu t}-T(t))^{-1}x$, this yields
\[
 x
=(\mu-A)\int_{0}^{t}\e^{\mu (t-s)}T(s)R(\e^{\mu t},T(t))x \d s
\]
for all $x\in X$, so $\mu-A$ is surjective. The injectivity of $\e^{\mu t}-T(t)$ in 
combination with Equation \eqref{eq:rescale_iden_2} from \prettyref{prop:rescale_identities} 
implies that $\mu-A$ is also injective, so it is bijective. 
Thus we have 
\begin{equation}\label{eq:spectral_incl_4}
 (\mu-A)^{-1}x
\underset{\mathclap{\eqref{eq:spectral_incl_2}}}{=}R(\e^{\mu t},T(t))\int_{0}^{t}\e^{\mu (t-s)}T(s)x \d s
\end{equation}
for all $x\in X$. Hence $(\mu-A)^{-1}\in\cL(X)$ by \eqref{eq:spectral_incl_1} and so 
$B(\lambda,\delta_\varepsilon )\subseteq\rho(A)$. 
Further, the fact that $\e^{\mu t}\in B(\e^{\lambda t},\varepsilon)$ for all $\mu\in B(\lambda,\delta_\varepsilon )$, 
the equicontinuity of $\{R(\eta,T(t))\;|\;\eta\in B(\e^{\lambda t},\varepsilon)\}$ in $\cL(X)$ 
and \eqref{eq:spectral_incl_1} in combination with \eqref{eq:spectral_incl_4} imply that 
$\{R(\mu,A)\;|\;\mu\in B(\lambda,\delta_\varepsilon )\}$ is equicontinuous in $\cL(X)$, which is a contradiction. 
We conclude that $\e^{\lambda t}\in\sigma^{\ast}(T(t))$.
\end{proof}

We make the following little observation about eigenvectors and eigenvalues of the generator of a 
strongly continuous semigroup.

\begin{rem}\label{rem:eigenvectors_0}
Let $X$ be a sequentially complete Hausdorff locally convex space and $(T(t))_{t\geq 0}$ a strongly continuous 
semigroup on $X$ with generator $(A,D(A))$. Then the following assertions are equivalent for $\lambda\in\C$ and 
$x\in X$.
\begin{enumerate}[label=\upshape(\alph*), leftmargin=*]
\item\label{it:eigenvec_01} $x\in D(A)$ and $Ax=\lambda x$.
\item\label{it:eigenvec_02} $T(t)x=\e^{\lambda t}x$ for all $t\in[0,\infty)$.
\item\label{it:eigenvec_03} There is $t_0>0$ such that $T(t)x=\e^{\lambda t}x$ for all $t\in[0,t_0]$.
\end{enumerate}
Indeed, for $x=0$ this is clearly true. 
If $x\neq 0$, then the implication ``\ref{it:eigenvec_01}$\Rightarrow$\ref{it:eigenvec_02}'' follows from the proof 
of \prettyref{thm:spec_incl} \ref{it:spec_incl_3}, and the implication ``\ref{it:eigenvec_02}$\Rightarrow$\ref{it:eigenvec_03}'' is obvious. Let us turn to ``\ref{it:eigenvec_03}$\Rightarrow$\ref{it:eigenvec_01}''. 
Let $t_0>0$ be such that $T(t)x=\e^{\lambda t}x$ for all $t\in[0,t_0]$. Then we have 
\[
 \lim_{t\to 0\rlim}\frac{T(t)x-x}{t}
=\lim_{t\to 0\rlim}\frac{\e^{\lambda t}x-x}{t}
=\lim_{t\to 0\rlim}\frac{\e^{\lambda t}-1}{t}x
=\lambda x,
\]
so $x\in D(A)$ and $Ax=\lambda x$.
\end{rem}

\begin{rem}\label{rem:eigenvectors}
Let $X$ be a sequentially complete Hausdorff locally convex space and $(T(t))_{t\geq 0}$ a strongly continuous 
locally equicontinuous semigroup on $X$ with generator $(A,D(A))$. 
\begin{enumerate}[label=\upshape(\alph*), leftmargin=*]
\item \label{it:eigenvec_1} Let $\lambda\in\sigma_{\ap}(A)$ and $(x_i)_{i\in I}$ be an (bounded, sequential) 
approximate eigenvector of $A$ corresponding to $\lambda$. Looking at the proof of 
\prettyref{thm:spec_incl} \ref{it:spec_incl_4}, 
we note that $(x_i)_{i\in I}$ is also an (bounded, sequential) approximate eigenvector of $T(t)$ corresponding 
to $\e^{\lambda t}$ for all $t\geq 0$. 
\item In general, we do not know whether $\e^{t\sigma_{\apr}(A)}\subseteq \sigma_{\apr} (T(t))$ holds for all $t\geq 0$. 
It is not clear how to infer from \eqref{eq:spectral_incl_3} alone that 
$\ran(\e^{\lambda t}-T(t))$ cannot be closed if $\ran(\lambda-A)$ is not closed for some $\lambda\in\C$. However, 
we know that the spectral inclusion theorem for the approximate spectra holds 
by the closedness of the generator $(A,D(A))$, \prettyref{prop:approx_spec} \ref{it:ap_spec_3} 
and \prettyref{thm:spec_incl} \ref{it:spec_incl_4} if $X$ is a Fr\'echet space. 
If $X$ is only complete, then the best that we can say is that 
$\e^{t\sigma_{\apr}(A)}\subseteq \sigma_{\ap} (T(t))$ by \prettyref{prop:approx_spec} \ref{it:ap_spec_1} 
and \prettyref{thm:spec_incl} \ref{it:spec_incl_4}.
\item \label{it:spec_top_incl} In general, we also do not know whether $\e^{t\sigma_{\tp}(A)}\subseteq \sigma_{\tp} (T(t))$ holds for all 
$t\geq 0$. It is not clear how to see that $\e^{\lambda t}-T(t)$ is bijective and 
$(\e^{\lambda t}-T(t))^{-1}\not\in\cL(X)$ if $\lambda-A$ is bijective and $(\lambda-A)^{-1}\notin\mathcal{L}(X)$ for 
some $\lambda\in\C$. However, if $\sigma_{\tp}(A)=\varnothing$, then $\e^{t\sigma_{\tp}(A)}=\varnothing$ and 
the spectral inclusion theorem for the topological spectra holds trivially. This is the case for the spaces $X$ 
listed in \prettyref{rem:alg_resolvent}. 

On the other hand, let $X$ be a space such that any bijective continuous 
linear map $S\colon X\to X$ has a continuous inverse. Then $\sigma_{\tp}(T(t))=\varnothing$ 
since $S_{t}\coloneqq \lambda-T(t)\in\cL(X)$ has continuous inverse for all $\lambda\in\C$ and $t\geq 0$ 
such that $\lambda-T(t)$ is bijective. For instance, such spaces $X$ are the ones listed 
in \prettyref{rem:alg_resolvent} since the continuous map $T(t)$ is clearly closed for all $t\geq 0$.
%Cases \ref{it:cl_gr_1} and \ref{it:cl_gr_2} of \prettyref{rem:alg_resolvent}, i.e.~ultrabornological and webbed $X$ 
%(see \cite[Open mapping theorem 24.30, p.~289]{meisevogt1997}), or barrelled and $B_{r}$-complete $X$ 
%(see \cite[11.1.7 Theorem (b), p.~221]{jarchow1981}).  
\end{enumerate}
\end{rem}

We have now everything at hand that we need to prove the spectral mapping theorem for the point spectrum of 
strongly continuous locally equicontinuous semigroups on sequentially complete Hausdorff locally convex spaces, 
which generalises one part of 
\cite[Chap.~IV, 3.7 Spectral Mapping Theorem for Point and Residual Spectrum, p.~277]{engel_nagel2000}. 

\begin{thm}\label{thm:spec_point}
Let $X$ be a sequentially complete Hausdorff locally convex space and $(T(t))_{t\geq 0}$ a strongly continuous 
locally equicontinuous semigroup on $X$ with generator $(A,D(A))$. 
Then 
\[
\sigma_{\p}(T(t))\setminus\{0\}=\e^{t\sigma_{\p}(A)}
\]
holds for all $t\geq 0$.
\end{thm}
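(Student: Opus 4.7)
The inclusion $\e^{t\sigma_{\p}(A)}\subseteq \sigma_{\p}(T(t))$ is already covered by \prettyref{thm:spec_incl}\,\ref{it:spec_incl_3}, so only the reverse inclusion $\sigma_{\p}(T(t))\setminus\{0\}\subseteq \e^{t\sigma_{\p}(A)}$ requires work. The case $t=0$ is degenerate (with the convention $\e^{0\cdot\varnothing}=\varnothing$ nothing is claimed beyond $T(0)=\id$), so I fix $t>0$ and $\mu\in\sigma_{\p}(T(t))\setminus\{0\}$, pick $\lambda_{0}\in\C$ with $\e^{\lambda_{0}t}=\mu$, and choose an eigenvector $x\neq 0$ with $T(t)x=\mu x$. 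The idea is to reduce to the periodic situation of Section~\ref{sect:periodic_sg} by rescaling and cutting down to a cyclic subspace.

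First I would rescale: by \prettyref{prop:rescale_sg} the family $S(s)\coloneqq \e^{-\lambda_{0}s}T(s)$ is a strongly continuous locally equicontinuous semigroup on $X$ with generator $(A-\lambda_{0},D(A))$, and $S(t)x=x$. Next I would form the closed cyclic subspace
\[
Y\coloneqq \overline{\operatorname{span}}\{S(s)x\;|\;s\geq 0\}.
\]
$Y$ is a $(S(s))_{s\geq 0}$-invariant topological subspace of $X$ containing $x$, sequentially closed and hence sequentially complete. By \prettyref{prop:restricted_sg}\,\ref{it:rest_sg_1}--\ref{it:rest_sg_3} the restricted family $(S(s)_{\mid Y})_{s\geq 0}$ is a strongly continuous, locally equicontinuous semigroup on $Y$ whose generator is the part of $A-\lambda_{0}$ in $Y$, with domain $D(A)\cap Y$. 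The semigroup law gives $S(t)_{\mid Y}S(r)x=S(r)S(t)x=S(r)x$ for every $r\geq 0$, and by density and continuity this upgrades to $S(t)_{\mid Y}=\id_{Y}$, so $(S(s)_{\mid Y})_{s\geq 0}$ is periodic on the nontrivial space $Y$.

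By \prettyref{prop:natural_period} its period $\rho$ is either $0$ or of the form $t/k$ with $k\in\N$. If $\rho=0$, then $S(s)_{\mid Y}=\id_{Y}$ for all $s\geq 0$, so the generator is the zero operator on all of $Y$; in particular $x\in D(A)$ and $(A-\lambda_{0})x=0$, giving $\lambda_{0}\in\sigma_{\p}(A)$ and $\mu=\e^{\lambda_{0}t}\in\e^{t\sigma_{\p}(A)}$. Otherwise $\rho>0$ and I would invoke the implication ``\ref{it:period_equiv_1}$\Rightarrow$\ref{it:period_equiv_2}'' of \prettyref{thm:nec_suff_cond_periodic}: the eigenvectors of the generator span a dense subspace of $Y\neq\{0\}$, so there exist $n\in\Z$ and $y\in Y\setminus\{0\}$ with $(A-\lambda_{0})_{\mid Y}y=\frac{2\pi\mathsf{i}n}{\rho}y$, whence $y\in D(A)$ and $Ay=(\lambda_{0}+\frac{2\pi\mathsf{i}n}{\rho})y$. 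Using $\rho k=t$,
\[
\e^{(\lambda_{0}+\frac{2\pi\mathsf{i}n}{\rho})t}=\mu\cdot\e^{2\pi\mathsf{i}nk}=\mu,
\]
placing $\mu$ in $\e^{t\sigma_{\p}(A)}$.

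The main obstacle is the passage to the cyclic invariant subspace $Y$: one must simultaneously preserve sequential completeness, local equicontinuity, and the identification of the generator of $(S(s)_{\mid Y})_{s\geq 0}$ as the \emph{part} of $A-\lambda_{0}$ in $Y$ (which is exactly what \prettyref{prop:restricted_sg} is designed for), and then upgrade the pointwise identity $S(t)x=x$ to $S(t)_{\mid Y}=\id_{Y}$ in order to activate \prettyref{thm:nec_suff_cond_periodic}. I deliberately avoid \prettyref{thm:fourier_expansion}, which would have forced the stronger hypothesis of quasi-completeness; only the density-of-eigenvectors half of \prettyref{thm:nec_suff_cond_periodic} is needed. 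The rest is bookkeeping in $\lambda_{0}$, $n$, and $k$.
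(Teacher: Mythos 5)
Your proof is correct and follows essentially the same route as the paper: rescale so that the eigenvalue becomes $1$, restrict to a closed invariant subspace on which the semigroup is periodic, and then extract an eigenvalue of the generator via \prettyref{prop:natural_period} together with \prettyref{thm:nec_suff_cond_periodic} (and the localisation $\sigma_{\p}\subseteq\frac{2\pi\mathsf{i}}{\rho}\Z$ from \prettyref{prop:resolvent_periodic}). The only (harmless) deviation is that you cut down to the closed cyclic subspace generated by the orbit of a single eigenvector rather than to the full eigenspace $\ker(\lambda-T(t))$ used in the paper; since your $Y$ is a closed subspace of that eigenspace, exactly the same lemmas apply.
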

\begin{proof}
Due to \prettyref{thm:spec_incl} \ref{it:spec_incl_3} we only need to prove the inclusion 
``$\subseteq$''. 
If $X=\{0\}$, then the point spectra are empty and so the inclusion ``$\subseteq$'' 
trivially holds. Let $X\neq\{0\}$.
If $t=0$, then $T(0)=\id$, $A=0$ and $\sigma_{\p}(T(0))=\{1\}$ and $\sigma_{\p}(A)=\{0\}$. 
So, the inclusion ``$\subseteq$'' holds. Let $t>0$ and $\lambda\in\sigma_{\p}(T(t))$, $\lambda\neq 0$. 
Then there is $\theta\in [0,2\pi)$ such that $\lambda=|\lambda|\e^{\mathsf{i}\theta}$. 
As in the proof of \prettyref{thm:nec_suff_cond_periodic} we may use \prettyref{prop:rescale_sg} and 
consider the rescaled semigroup $(S(s))_{s\geq 0}$ given by 
$S(s)\coloneqq \e^{-s(\ln(|\lambda|)+\mathsf{i}\theta)}T(ts)$ for $s\geq 0$ with generator $(B,D(B))$ such that 
$B=tA-\ln(|\lambda|)-\mathsf{i}\theta$ and $D(B)=D(A)$. Since $S(1)=\frac{1}{\lambda} T(t)$, it follows that $S(1)$ 
has eigenvalue $1$. Hence we may assume w.l.o.g.~that $t=1$ and $\lambda=1$ from the start. 
Let us consider the corresponding non-trivial eigenspace
\[
Y\coloneqq \ker(1-T(1))=\{y\in X\;|\;T(1)y=y\},
\]  
which is a $(T(s))_{s\geq 0}$-invariant closed subspace of $X$. 
The restricted semigroup $(T(s)_{\mid Y})_{s\geq 0}$ is strongly continuous and locally equicontinuous 
by \prettyref{prop:restricted_sg} \ref{it:rest_sg_1} and \ref{it:rest_sg_2}. 
Its generator is $(A_{\mid Y},D(A_{\mid Y}))$ fulfilling $D(A_{\mid Y})=D(A)\cap Y$ 
by \prettyref{prop:restricted_sg} \ref{it:rest_sg_3}.
Further, we have $T(1)_{\mid Y}=\id$ on $Y$ and so $(T(s)_{\mid Y})_{s\geq 0}$ is periodic with period 
$\rho=0$ or $\rho=\frac{1}{k}$ for some $k\in\N$ by \prettyref{prop:natural_period}. 
In particular, it holds that $\sigma_{\p}(A_{\mid Y})\subseteq \sigma_{\p}(A)$. 
If $\rho=0$, then we have by \prettyref{prop:period_zero} that $T(s)_{\mid Y}=\id$ on $Y$ and $A_{\mid Y}=0$ on $D(A_{\mid Y})=Y$. 
This implies that $0\in\sigma_{\p}(A)$ and so $\lambda=1=\e^{0}\in \e^{\sigma_{\p}(A)}$ since $Y$ is non-trivial. 
If $\rho=\frac{1}{k}$ for some $k\in\N$, then $\sigma_{\p}(A_{\mid Y})\subseteq 2\pi\mathsf{i}k\Z$ 
by \prettyref{prop:resolvent_periodic} \ref{it:res_per_1}. 
Moreover, $\sigma_{\p}(A_{\mid Y})\neq\varnothing$ by \prettyref{thm:nec_suff_cond_periodic} since 
$Y$ is sequentially complete as a closed subspace of the sequentially complete space $X$. 
So, for $\mu\in\sigma_{\p}(A_{\mid Y})$ there is $m\in\Z$ such that $\mu=2\pi\mathsf{i}km$. 
Since $\sigma_{\p}(A_{\mid Y})\subseteq \sigma_{\p}(A)$, we get 
\[
\lambda=1=\e^{2\pi\mathsf{i}km}=\e^{\mu}\in\e^{\sigma_{\p}(A)}.
\]
\end{proof}

We also have the following relation between the eigenspaces of $A$ and $T(t)$, 
which is observed in the case of Banach spaces in \cite[Chap.~IV, 3.8 Corollary, p.~278]{engel_nagel2000}.

\begin{cor}\label{cor:eigenspace}
Let $X$ be a sequentially complete Hausdorff locally convex space and $(T(t))_{t\geq 0}$ a strongly continuous 
semigroup on $X$ with generator $(A,D(A))$. 
Then the following assertions hold for all $\lambda\in\C$.
\begin{enumerate}[label=\upshape(\alph*), leftmargin=*]
\item \label{it:eigensp_1} $\ker (\lambda-A)=\bigcap_{t\geq 0}\ker (\e^{\lambda t}-T(t))$,
\item \label{it:eigensp_2} $\ker (\e^{\lambda t}-T(t))=\overline{\operatorname{span}}\left(\bigcup_{n\in\Z}
\ker (\lambda+\frac{2\pi\mathsf{i}n}{t}-A)\right)$ for all $t>0$ if $(T(s))_{s\geq 0}$ is locally equicontinuous.
\end{enumerate}
\end{cor}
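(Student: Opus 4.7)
The plan is to prove (a) directly from the resolvent identities and \prettyref{rem:eigenvectors}~\ref{it:eigenvec_2}, and then to reduce (b) to \prettyref{thm:nec_suff_cond_periodic} via a rescaling argument that identifies $\ker(\e^{\lambda t}-T(t))$ with the kernel of $1-S(1)$ for a suitable rescaled semigroup, on which the restriction is periodic.

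For part \ref{it:eigensp_1}, the inclusion ``$\subseteq$'' is immediate from \prettyref{prop:rescale_identities}: if $Ax=\lambda x$, then $(A-\lambda)x=0$ and \eqref{eq:rescale_iden_2} multiplied by $\e^{\lambda t}$ gives $\e^{\lambda t}x-T(t)x=0$ for every $t\geq 0$. Conversely, if $x\in \bigcap_{t\geq 0}\ker(\e^{\lambda t}-T(t))$, then either $x=0\in\ker(\lambda-A)$, or $x\neq 0$ and \prettyref{rem:eigenvectors}~\ref{it:eigenvec_2} applied on $[0,t_{0}]$ for any $t_{0}>0$ yields $x\in D(A)$ with $Ax=\lambda x$.

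For part \ref{it:eigensp_2}, fix $t>0$ and set $Y\coloneqq \ker(\e^{\lambda t}-T(t))$, which is closed as the kernel of a continuous operator. The inclusion ``$\supseteq$'' follows from part \ref{it:eigensp_1} applied to $\mu_{n}\coloneqq\lambda+\frac{2\pi\mathsf{i}n}{t}$: for $x\in\ker(\mu_{n}-A)$ one has $T(t)x=\e^{\mu_{n}t}x=\e^{\lambda t}\e^{2\pi\mathsf{i}n}x=\e^{\lambda t}x$, so $x\in Y$, and closedness of $Y$ preserves this under taking the closed span. For ``$\subseteq$'', I rescale via \prettyref{prop:rescale_sg}: the family $S(s)\coloneqq \e^{-\lambda s t}T(ts)$ is a strongly continuous locally equicontinuous semigroup with generator $B=tA-\lambda t$ on $D(B)=D(A)$. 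Since $S(1)=\e^{-\lambda t}T(t)$, one has $Y=\ker(1-S(1))$, and $Y$ is $(S(s))_{s\geq 0}$-invariant because each $S(s)$ commutes with $S(1)$; moreover $Y$ is sequentially complete as a closed subspace of $X$. By \prettyref{prop:restricted_sg}~\ref{it:rest_sg_1}--\ref{it:rest_sg_3}, the restricted semigroup $(S(s)_{\mid Y})_{s\geq 0}$ is strongly continuous and locally equicontinuous on $Y$ with generator $(B_{\mid Y},D(A)\cap Y)$, and since $S(1)_{\mid Y}=\id_{Y}$, it is periodic.

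If $Y=\{0\}$, both sides are $\{0\}$. Otherwise, by \prettyref{prop:natural_period} the period is either $\rho=0$, in which case $S(s)_{\mid Y}=\id_{Y}$ for all $s\geq 0$ (hence $B_{\mid Y}=0$ and $\sigma_{\p}(B_{\mid Y})\subseteq\{0\}$), or $\rho=\frac{1}{k}$ for some $k\in\N$, in which case \prettyref{prop:resolvent_periodic}~\ref{it:res_per_1} yields $\sigma^{\ast}(B_{\mid Y})\subseteq 2\pi\mathsf{i}k\Z\subseteq 2\pi\mathsf{i}\Z$. \prettyref{thm:nec_suff_cond_periodic} then guarantees that the eigenvectors of $B_{\mid Y}$ span a dense subspace of $Y$. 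For every $n\in\Z$, we have
\[
 \ker(2\pi\mathsf{i}n-B_{\mid Y})
=\ker(\mu_{n}-A)\cap Y
=\ker(\mu_{n}-A),
\]
the last identity using that $\ker(\mu_{n}-A)\subseteq Y$ by the ``$\supseteq$'' inclusion proved above. Consequently, the closed span of $\bigcup_{n\in\Z}\ker(\mu_{n}-A)$ coincides with the closed span of the eigenspaces of $B_{\mid Y}$, which equals $Y=\ker(\e^{\lambda t}-T(t))$. The main delicate point is the degenerate case $\rho=0$, which I handle separately by observing that the restricted semigroup then collapses to the identity so that the single eigenspace $\ker(\lambda-A)=Y$ already exhausts $Y$.
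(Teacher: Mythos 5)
Your proposal is correct and follows essentially the same route as the paper: part (a) via the identity \eqref{eq:rescale_iden_2} together with \prettyref{rem:eigenvectors}~\ref{it:eigenvec_2}, and part (b) by rescaling to reduce to $\ker(1-S(1))$, restricting the semigroup to this closed invariant subspace, and invoking \prettyref{prop:natural_period}, \prettyref{prop:restricted_sg} and \prettyref{thm:nec_suff_cond_periodic} (with the degenerate period $\rho=0$ treated separately, just as in the paper). The only cosmetic differences are that you carry out the rescaling explicitly with the generator $B=tA-\lambda t$ rather than assuming $\lambda=0$, $t=1$ w.l.o.g., and that you observe the sharper identity $\ker(2\pi\mathsf{i}n-B_{\mid Y})=\ker(\mu_n-A)$, which yields the stated equality directly.
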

\begin{proof} 

\ref{it:eigensp_1} This directly follows from \prettyref{rem:eigenvectors_0}.

\ref{it:eigensp_2} By \prettyref{thm:spec_point} we have $\sigma_{\p}(T(t))\setminus\{0\}=\e^{t\sigma_{\p}(A)}$ for all 
$t\geq 0$. Let $t>0$ and $n\in\Z$. If $\e^{\lambda t}=\e^{t(\lambda+\frac{2\pi\mathsf{i}n}{t})}\notin 
\sigma_{\p}(T(t))\setminus\{0\}$, then $\lambda+\frac{2\pi\mathsf{i}n}{t}\notin \sigma_{\p}(A)$. 
Thus $\ker (\e^{\lambda t}-T(t))=\{0\}=\overline{\operatorname{span}}\left(\bigcup_{n\in\Z}
\ker (\lambda+\frac{2\pi\mathsf{i}n}{t}-A)\right)$. 
Let $\e^{\lambda t}\in \sigma_{\p}(T(t))\setminus\{0\}$. 

``$\supseteq$'' Let $x\in \ker (\lambda+\frac{2\pi\mathsf{i}n}{t}-A)$. 
Then $T(t)x=\e^{t(\lambda+\frac{2\pi\mathsf{i}n}{t})}x=\e^{\lambda t}x$ by \prettyref{rem:eigenvectors_0}. 
Since $\ker (\e^{\lambda t}-T(t))$ is a closed linear subspace 
of $X$, this proves the inclusion ``$\supseteq$''.

``$\subseteq$'' Proceeding as in the proof of \prettyref{thm:spec_point}, we may w.l.o.g.~assume that $\lambda=0$ 
and $t=1$, and we set $Y\coloneqq \ker(1-T(1))$.
From the proof of \prettyref{thm:spec_point} we recall that the restricted semigroup $(T(s)_{\mid Y})_{s\geq 0}$ 
is strongly continuous, locally equicontinuous and periodic with period $\rho=0$ or $\rho=\frac{1}{k}$ 
for some $k\in\N$. Its generator is $(A_{\mid Y},D(A_{\mid Y}))$ whose domain fulfils $D(A_{\mid Y})=D(A)\cap Y$. 
If $\rho=0$, then we have by \prettyref{prop:period_zero} that $A_{\mid Y}=0$ and for $n=0$ we get 
\[
 \ker (\e^{\lambda t}-T(t))
=\ker(1-T(1))
=Y
=\ker\Bigl(\lambda+\frac{2\pi\mathsf{i}n}{t}-A_{\mid Y}\Bigr)
\subseteq \ker\Bigl(\lambda+\frac{2\pi\mathsf{i}n}{t}-A\Bigr),
\]
so the inclusion ``$\subseteq$'' holds in this case. Now, let us consider the case that 
$\rho=\frac{1}{k}$ for some $k\in\N$. Then we have by (the proof of) \prettyref{thm:nec_suff_cond_periodic} 
\begin{align*}
 \ker (\e^{\lambda t}-T(t))
&=\ker(1-T(1))
 =Y
 =\overline{\operatorname{span}}\left(\bigcup_{n\in\Z}\ker\Bigl(\frac{2\pi\mathsf{i}n}{\rho}-A_{\mid Y}\Bigr)\right)\\
&=\overline{\operatorname{span}}\left(\bigcup_{n\in\Z}\ker(2\pi\mathsf{i}nk-A_{\mid Y})\right)
\subseteq \overline{\operatorname{span}}\left(\bigcup_{n\in\Z}\ker(2\pi\mathsf{i}n-A)\right)\\
&=\overline{\operatorname{span}}\left(\bigcup_{n\in\Z}\ker\Bigl(\lambda+\frac{2\pi\mathsf{i}n}{t}-A\Bigr)\right)
\end{align*}
Hence the inclusion ``$\subseteq$'' also holds in this case.
\end{proof}

Now, we turn to proving the spectral mapping theorem for the residual spectrum of 
strongly continuous locally equicontinuous semigroups on Hausdorff locally convex spaces $X$. 
We start with a generalisation of some of the results given in 
\cite[Chap.~IV, 2.18 Proposition (i), (vi), p.~262]{engel_nagel2000} where $X$ is a Banach space. 
However, we will modify the proof of these results given in \cite[p.~28--29]{vanneerven1996}. 

\begin{prop}\label{prop:spec_point_sun_dual}
Let $X$ be a sequentially complete Hausdorff locally convex space such that $X_{\ob}'$ is sequentially complete 
and $(T(t))_{t\geq 0}$ a strongly continuous semigroup on $X$ with generator $(A,D(A))$. 
Then the following assertions hold.
\begin{enumerate}[label=\upshape(\alph*), leftmargin=*]
\item \label{it:spec_point_dual_1} $\sigma_{\p}(A')=\sigma_{\p}(A^{\odot})$,
\item \label{it:p_spec_point_dual_2} $\sigma_{\p}(T'(t))=\sigma_{\p}(T^{\odot}(t))$ for all $t\geq 0$ 
if $\rho_{\alg}(A')\neq\varnothing$.
\end{enumerate}
\end{prop}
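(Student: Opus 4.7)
The plan is to handle both parts by reducing to the identification of $A^{\odot}$ as the part of $A'$ in $X^{\odot}$ and the inclusion $D(A') \subseteq X^{\odot}$ coming from \prettyref{thm:dual_str_cont}. In each case, the inclusions $\sigma_{\p}(A^{\odot}) \subseteq \sigma_{\p}(A')$ and $\sigma_{\p}(T^{\odot}(t)) \subseteq \sigma_{\p}(T'(t))$ are trivial: a non-zero eigenvector $x' \in X^{\odot}$ of $A^{\odot}$ (resp.\ $T^{\odot}(t)$) with eigenvalue $\lambda$ is, by definition of the part (resp.\ the restriction), also an eigenvector of $A'$ (resp.\ $T'(t)$) with the same eigenvalue. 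So only the reverse inclusions require work.

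For part \ref{it:spec_point_dual_1}, let $\lambda \in \sigma_{\p}(A')$ and pick $x' \in D(A')$, $x' \neq 0$, with $A'x' = \lambda x'$. By \prettyref{thm:dual_str_cont} we have $D(A') \subseteq X^{\odot} = \overline{D(A')}^{\beta(X',X)}$, hence $x' \in X^{\odot}$; since $X^{\odot}$ is a linear subspace, also $A'x' = \lambda x' \in X^{\odot}$. The description $D(A^{\odot}) = \{y' \in D(A') \mid A' y' \in X^{\odot}\}$ from \prettyref{thm:dual_str_cont} then gives $x' \in D(A^{\odot})$ with $A^{\odot}x' = A'x' = \lambda x'$, so $\lambda \in \sigma_{\p}(A^{\odot})$.

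For part \ref{it:p_spec_point_dual_2}, the hypothesis $\rho_{\alg}(A') \neq \varnothing$ enters via the following regularisation argument. Let $\lambda \in \sigma_{\p}(T'(t))$ with eigenvector $x' \in X'$, $x' \neq 0$, and fix $\mu \in \rho_{\alg}(A')$. The key step is to verify that $T'(t)$ commutes with $(\mu - A')^{-1}$. For this, I would first note that $T'(t)$ leaves $D(A')$ invariant and commutes with $A'$ on $D(A')$: this follows from the corresponding commutativity $T(s)A \subseteq AT(s)$ on $D(A)$ (\cite[Proposition 1.2 (2), p.~260]{komura1968}) by a direct duality computation, $\langle T'(t)y', Ax\rangle = \langle y', T(t)Ax\rangle = \langle y', AT(t)x\rangle = \langle A'y', T(t)x\rangle$, showing both $T'(t)y' \in D(A')$ and $A'T'(t)y' = T'(t)A'y'$ for $y' \in D(A')$. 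Writing $y' := (\mu - A')^{-1}x' \in D(A')$, one then applies $T'(t)$ and $(\mu - A')$ to get $(\mu - A')T'(t)y' = T'(t)(\mu - A')y' = T'(t)x'$, i.e.\ $T'(t)(\mu - A')^{-1}x' = (\mu - A')^{-1}T'(t)x'$.

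With the commutation in hand, the conclusion is immediate: $y' = (\mu - A')^{-1}x'$ is non-zero (the algebraic inverse is injective) and lies in $D(A') \subseteq X^{\odot}$, and
\[
T'(t)y' = (\mu - A')^{-1}T'(t)x' = \lambda (\mu - A')^{-1}x' = \lambda y'.
\]
Since $X^{\odot}$ is $(T'(t))_{t\geq 0}$-invariant, $T^{\odot}(t)y' = T'(t)y' = \lambda y'$, so $\lambda \in \sigma_{\p}(T^{\odot}(t))$. The main obstacle is just the commutation $T'(t)(\mu - A')^{-1} = (\mu - A')^{-1}T'(t)$, and that reduces cleanly to the well-known commutation of $T(s)$ and $A$ on $D(A)$ via a one-line dualisation.
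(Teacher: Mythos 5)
Your proposal is correct, and both parts ultimately rest on the same foundation as the paper, namely \prettyref{thm:dual_str_cont}; the reverse inclusions and the whole of part \ref{it:p_spec_point_dual_2} follow the paper's route. The one genuine difference is in the forward inclusion of part \ref{it:spec_point_dual_1}: the paper verifies $x'\in D(A^{\odot})$ from scratch, by computing $\langle T^{\odot}(t)x'-x',x\rangle=\lambda\int_{0}^{t}\langle x',T(s)x\rangle\,\d s$ via \cite[Corollary, p.~261]{komura1968} and then showing that the difference quotient $\frac{1}{t}(T^{\odot}(t)x'-x')$ converges to $\lambda x'$ in $\beta(X',X)$ using the $\beta(X',X)$-strong continuity of $(T^{\odot}(t))_{t\geq 0}$. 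You instead observe that $x'\in D(A')\subseteq X^{\odot}$ and $A'x'=\lambda x'\in X^{\odot}$, and then invoke the domain identity $D(A^{\odot})=\{y'\in D(A')\;|\;A'y'\in X^{\odot}\}$ already recorded in \prettyref{thm:dual_str_cont}. This is shorter and equally rigorous, since that identity is part of the cited theorem; what the paper's longer computation buys is only that it re-derives the needed membership directly from the definition of the generator rather than quoting the domain description. In part \ref{it:p_spec_point_dual_2} your argument coincides with the paper's, except that you additionally justify the commutation $T'(t)(\mu-A')^{-1}=(\mu-A')^{-1}T'(t)$ by dualising $T(t)A\subseteq AT(t)$, a step the paper uses without comment; supplying it is a small improvement rather than a deviation.
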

\begin{proof}
The proof is based on the results in \prettyref{thm:dual_str_cont}.

\ref{it:spec_point_dual_1} ``$\subseteq$'' Let $\sigma_{\p}(A')$. Then there is $x'\in D(A')$, $x'\neq 0$, 
such that $A'x'=\lambda x'$. Due to the inclusion $D(A')\subseteq X^{\odot}$ we obtain that $x'\in X^{\odot}$. 
It follows from \cite[Corollary, p.~261]{komura1968} that
\begin{align*}
  \langle T^{\odot}(t)x'-x',x \rangle
&=\langle T'(t)x'-x',x \rangle
 =\langle x',T(t)x -x\rangle
 =\langle x',A\int_{0}^{t}T(s)x\d s\rangle\\
&=\langle A'x',\int_{0}^{t}T(s)x\d s\rangle
 =\langle \lambda x',\int_{0}^{t}T(s)x\d s\rangle
 =\lambda \int_{0}^{t}\langle x',T(s)x\rangle\d s 
\end{align*}
for all $t\geq 0$ and $x\in X$.
Hence we get for all $t>0$ and $x\in X$ that
\[
 \langle \frac{1}{t}( T^{\odot}(t)x'-x')-\lambda x',x \rangle
=\frac{\lambda}{t}\int_{0}^{t}\langle x',T(s)x-x\rangle\d s 
=\frac{\lambda}{t}\int_{0}^{t}\langle T^{\odot}(s)x'-x',x\rangle\d s .
\]
For bounded $M\subseteq X$ we deduce that 
\[
    \sup_{x\in M}|\langle \frac{1}{t}( T^{\odot}(t)x'-x')-\lambda x',x \rangle|
\leq |\lambda|\sup_{s\in [0,t]}\sup_{x\in M}|\langle T^{\odot}(s)x'-x',x\rangle|.
\]  
Letting $t\to 0\rlim$, the $\beta(X',X)$-strong continuity of $(T^{\odot}(t))_{t\geq 0}$ on $X^{\odot}$ 
implies that $x'\in D(A^{\odot})$ and $A^{\odot}x'=\lambda x'$. Thus $\lambda\in \sigma_{\p}(A^{\odot})$.

``$\supseteq$'' Let $\lambda\in \sigma_{\p}(A^{\odot})$. Then there is $x^{\odot}\in D(A^{\odot})$, $x^{\odot}\neq 0$, 
such that $A^{\odot}x^{\odot}=\lambda x^{\odot}$. Hence we have $x^{\odot}\in D(A^{\odot})\subseteq D(A')$ and 
\[
\lambda x^{\odot}=A^{\odot}x^{\odot}=A_{\mid X^{\odot}}'x^{\odot}=A'x^{\odot},
\]
yielding that $\lambda\in\sigma_{\p}(A')$.

\ref{it:p_spec_point_dual_2} ``$\subseteq$'' Let $t\geq 0$ and $\lambda\in \sigma_{\p}(T'(t))$. 
Then there is $x'\in X'$, $x'\neq 0$, such that $T'(t)x'=\lambda x'$. Let $\mu\in\rho_{\alg}(A')$. 
It follows that $(\mu-A')^{-1}x'\in D(A')\subseteq X^{\odot}$ and 
%\[ T'(t)x'=T'(t)(\mu-A')(\mu-A')^{-1}x'=(\mu-A')T'(t)(\mu-A')^{-1}x'\] 
%which implies 
%\[ (\mu-A')^{-1}T'(t)x'=T'(t)(\mu-A')^{-1}x'\]
\[
 T^{\odot}(t)(\mu-A')^{-1}x'
=T'(t)(\mu-A')^{-1}x'
=(\mu-A')^{-1}T'(t)x'
=\lambda (\mu-A')^{-1}x'.
\]
Since $x'\neq 0$, we also have that $(\mu-A')^{-1}x'\neq 0$ and so $(\mu-A')^{-1}x'$ is an eigenvector of 
$T^{\odot}(t)$ corresponding to $\lambda$. Thus $\lambda\in \sigma_{\p}(T^{\odot}(t))$. 

``$\supseteq$'' This inclusion is clear since $T^{\odot}(t)$ is a restriction of $T'(t)$ for all $t\geq 0$.
\end{proof}

The preceding result allows us to prove the spectral mapping theorem for the residual spectrum. 

\begin{thm}\label{thm:spec_res}
Let $X$ be a sequentially complete Hausdorff locally convex space such that $X_{\ob}'$ is sequentially complete 
and $(T(t))_{t\geq 0}$ a strongly continuous locally equicontinuous semigroup on $X$ with generator $(A,D(A))$ 
such that $\rho_{\alg}(A)\neq\varnothing$. 
Then 
\[
\sigma_{\res}(T(t))\setminus\{0\}=\e^{t\sigma_{\res}(A)}
\]
holds for all $t\geq 0$.
\end{thm}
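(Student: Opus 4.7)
The plan is to reduce the claim to the already proved spectral mapping theorem for the point spectrum (\prettyref{thm:spec_point}) by passing to the sun dual semigroup. Since $X$ is sequentially complete, the generator $(A,D(A))$ is densely defined, so \prettyref{prop:dual_spec} \ref{it:dual_spec_3} applied to $(A,D(A))$ yields $\sigma_{\res}(A)=\sigma_{\p}(A')$; the same proposition applied to the continuous everywhere-defined operator $T(t)\in\cL(X)$ yields $\sigma_{\res}(T(t))=\sigma_{\p}(T(t)')$ for every $t\geq 0$. It therefore suffices to establish
\[
\sigma_{\p}(T(t)')\setminus\{0\}=\e^{t\sigma_{\p}(A')},\quad t\geq 0.
\]

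The dual semigroup $(T'(t))_{t\geq 0}$ is only $\sigma(X',X)$-strongly continuous in general, so \prettyref{thm:spec_point} cannot be applied directly on $X_{\ob}'$. Instead I would pass to the sun dual $X^{\odot}$. By \prettyref{thm:dual_str_cont}, $X^{\odot}$ is $\beta(X',X)$-closed in $X'$, hence sequentially complete as a closed subspace of the sequentially complete space $X_{\ob}'$; the restricted semigroup $(T^{\odot}(t))_{t\geq 0}$ is $\beta(X',X)$-strongly continuous and locally $\beta(X',X)$-equicontinuous on $X^{\odot}$; and its generator $(A^{\odot},D(A^{\odot}))$ is the part of $(A',D(A'))$ in $X^{\odot}$. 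Moreover, \prettyref{prop:dual_spec} \ref{it:dual_spec_2} gives $\rho_{\alg}(A)\subseteq\rho_{\alg}(A')$, so the standing hypothesis $\rho_{\alg}(A)\neq\varnothing$ forces $\rho_{\alg}(A')\neq\varnothing$. Consequently \prettyref{prop:spec_point_sun_dual} \ref{it:spec_point_dual_1} and \ref{it:p_spec_point_dual_2} yield
\[
\sigma_{\p}(A')=\sigma_{\p}(A^{\odot})\quad\text{and}\quad\sigma_{\p}(T'(t))=\sigma_{\p}(T^{\odot}(t)),\quad t\geq 0.
\]

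Now \prettyref{thm:spec_point} applies to the strongly continuous locally equicontinuous semigroup $(T^{\odot}(t))_{t\geq 0}$ on the sequentially complete space $X^{\odot}$, giving
\[
\sigma_{\p}(T^{\odot}(t))\setminus\{0\}=\e^{t\sigma_{\p}(A^{\odot})},\quad t\geq 0.
\]
Chaining this identity with the four identifications established above yields
\[
\sigma_{\res}(T(t))\setminus\{0\}=\sigma_{\p}(T(t)')\setminus\{0\}=\sigma_{\p}(T^{\odot}(t))\setminus\{0\}=\e^{t\sigma_{\p}(A^{\odot})}=\e^{t\sigma_{\p}(A')}=\e^{t\sigma_{\res}(A)},
\]
which is the claim.

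The main obstacle is purely verificational rather than analytic: confirming that the hypotheses needed to invoke \prettyref{thm:spec_point} and \prettyref{prop:spec_point_sun_dual} for the sun dual object are genuinely available, namely sequential completeness of $(X^{\odot},\beta(X',X))$, local $\beta(X',X)$-equicontinuity of $(T^{\odot}(t))_{t\geq 0}$, and $\rho_{\alg}(A')\neq\varnothing$. All three items are immediate consequences of \prettyref{thm:dual_str_cont} and \prettyref{prop:dual_spec} \ref{it:dual_spec_2} under the given hypotheses on $X$, $X_{\ob}'$ and $A$, so beyond this bookkeeping no further work is required; this is precisely where the assumptions that $X_{\ob}'$ be sequentially complete and that $\rho_{\alg}(A)\neq\varnothing$ enter.
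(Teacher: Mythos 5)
Your proposal is correct and follows essentially the same route as the paper: reduce the residual spectrum to the point spectrum of the dual via \prettyref{prop:dual_spec} \ref{it:dual_spec_3}, transfer to the sun dual using \prettyref{prop:spec_point_sun_dual} (with $\rho_{\alg}(A')\neq\varnothing$ supplied by \prettyref{prop:dual_spec} \ref{it:dual_spec_2}), and apply \prettyref{thm:spec_point} to $(T^{\odot}(s))_{s\geq 0}$ on the sequentially complete space $X^{\odot}$. The verification of the hypotheses via \prettyref{thm:dual_str_cont} matches the paper's argument exactly.
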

\begin{proof}
Let $t\geq 0$. First, we note that $\rho_{\alg}(A')\neq\varnothing$ by \prettyref{prop:dual_spec} \ref{it:dual_spec_2} 
because $\rho_{\alg}(A)\neq\varnothing$. Due to \prettyref{prop:dual_spec} \ref{it:dual_spec_3} and 
\prettyref{prop:spec_point_sun_dual} we have 
$\sigma_{\res}(A)=\sigma_{\p}(A')=\sigma_{\p}(A^{\odot})$ and 
$\sigma_{\res}(T(t))=\sigma_{\p}(T'(t))=\sigma_{\p}(T^{\odot}(t))$. 
Next, we remark that $X^{\odot}$ is sequentially complete as a $\beta(X',X)$-closed subspace 
of the sequentially complete space $X_{\ob}'$ by \prettyref{thm:dual_str_cont}. 
Applying \prettyref{thm:spec_point} to the $\beta(X',X)$-strongly continuous locally 
$\beta(X',X)$-equicontinuous semigroup $(T^{\odot}(s))_{s\geq 0}$ on $X^{\odot}$ (see \prettyref{thm:dual_str_cont}), 
we conclude our statement. 
\end{proof}

\prettyref{thm:spec_res} generalises one part of \cite[Chap.~IV, 3.7 Spectral Mapping Theorem for Point and Residual Spectrum, p.~277]{engel_nagel2000} (cf.~\cite[Theorem 2.1.3, p.~30]{vanneerven1996}) where $X$ is a Banach space.

\begin{rem}\label{rem:non_empty_alg_res}
Let $X$ be a Hausdorff locally convex space.
\begin{enumerate}[label=\upshape(\alph*), leftmargin=*]
\item \label{it:non_empty_alg_res} With regard to the condition $\rho_{\alg}(A)\neq\varnothing$ 
in \prettyref{thm:spec_res} we note that it is fulfilled by \cite[Lemma 5.2, p.~275]{albanese2013} 
if $(T(t))_{t\geq 0}$ is a strongly continuous locally equicontinuous semigroup on sequentially complete $X$ 
such that there is $a\geq 0$ with
\begin{equation}\label{eq:laplace_resolvent}
\Bigl\{\e^{-at}\int_{0}^{t}T(s)(\cd)\d s\;|\;t\geq 0\Bigr\}\text{ is equicontinuous in }\cL(X).
\end{equation}
Indeed, in this case $\{\lambda\in\C\;|\;\re(\lambda)>a\}\subseteq\rho(A)$ and we have for all $\lambda\in\C$ 
with $\re(\lambda)>a$ that
\[
R(\lambda,A)x=\int_{0}^{\infty}\e^{-\lambda s}T(s)x\d s
\]
for all $x\in X$ where the integral above is an improper Riemann integral. 
In particular, if $(T(t))_{t\geq 0}$ is quasi-equicontinu\-ous, thus locally equicontinuous, 
then \eqref{eq:laplace_resolvent} is fulfilled by \cite[Remark 5.3, p.~276]{albanese2013}.
\item \label{it:strong_dual_compl} $X_{\ob}'$ is quasi-complete, so in particular sequentially complete, 
by \cite[11.2.4 Proposition, p.~222]{jarchow1981} and \cite[12.4.2 Theorem, p.~258]{jarchow1981} 
if $X$ is quasi-barrelled or a gDF space. 
\end{enumerate}
\end{rem}

Looking at the spectral decompositions \prettyref{prop:decomp_spec} \ref{it:decomp_spec_2} 
and \ref{it:decomp_spec_3} and having a spectral mapping theorem for the residual spectrum, 
we would like to obtain a spectral mapping theorem for the approximate point spectrum (under suitable conditions). 
However, we will only manage to get the spectral mapping theorem for the bounded (sequential) approximate 
point spectrum. The remaining part of our final section is dedicated to this spectral mapping theorem. 
Let us recall that this kind of spectral mapping theorem already does not hold 
for general strongly continuous semigroups on Banach spaces (see \cite[p.~270--275]{engel_nagel2000}). 
Therefore we have to impose more properties on the semigroups, namely eventual uniform continuity, 
or different properties on the spaces, namely consider generalised Schwartz spaces. 
We start with the following generalisation of (one implication of) \cite[Chap.~IV, 3.9 Lemma, p.~279]{engel_nagel2000}.

\begin{prop}\label{prop:bounded_ap}
Let $X$ be a sequentially complete Hausdorff locally convex space and $(T(t))_{t\geq 0}$ a strongly continuous 
locally equicontinuous semigroup on $X$ with generator $(A,D(A))$. 
Let $t\geq 0$, $\lambda\in\C$, $\lambda\neq 0$, and $(x_i)_{i\in I}$ a net in $X$ such that
\begin{enumerate}[label=\upshape(\roman*), leftmargin=*, widest=iii]
\item \label{it:bounded_ap_11} $(x_i)_{i\in I}$ does not converge to $0$, 
\item \label{it:bounded_ap_12} $\lim_{i\in I} T(t)x_i -\lambda x_i=0$, and
\item \label{it:bounded_ap_13} $\lim_{s\to 0\rlim}\sup_{i\in I} q(T(s)x_i-x_i)=0$ for all $q\in\Gamma_X$.
\end{enumerate}
Then the following assertions hold.
\begin{enumerate}[label=\upshape(\alph*), leftmargin=*]
\item \label{it:bounded_ap} If $(x_i)_{i\in I}$ is bounded, then there is $\mu\in\sigma_{\bap}(A)$ ($\mu\in\sigma_{\bap}^{\seq}(A)$ if $I=\N$) such that $\lambda=\e^{\mu t}$.
\item \label{it:quasi_ap} If $(T(s))_{s\geq 0}$ is quasi-equicontinuous, then there is $\mu\in\sigma_{\ap}(A)$ ($\mu\in\sigma_{\ap}^{\seq}(A)$ if $I=\N$) such that $\lambda=\e^{\mu t}$.
\end{enumerate}
\end{prop}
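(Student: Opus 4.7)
The idea is to build an approximate eigenvector of $A$ at a suitable $\mu\in\C$ with $\e^{\mu t}=\lambda$ out of the approximate eigenvector $(x_i)_{i\in I}$ for $T(t)$ at $\lambda$ by Ces\`aro-type averaging against complex exponentials. I may assume $t>0$ (if $t=0$, condition (ii) forces $\lambda=1$ by (i), and the statement reduces to a separate short argument via orbit averages $\tfrac{1}{s_i}\int_0^{s_i}T(s)x_i\d s$). Fix any $\mu_0\in\C$ with $\e^{\mu_0 t}=\lambda$ and consider the lattice of candidates $\mu_k\coloneqq \mu_0+\tfrac{2\pi\mathsf{i}k}{t}$, $k\in\Z$, all satisfying $\e^{\mu_k t}=\lambda$. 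For each $k$ I define
\[
w_i^{(k)}\coloneqq \frac{1}{t}\int_0^t\e^{-\mu_k s}T(s)x_i\,\d s.
\]
By \prettyref{prop:rescale_sg} the rescaled semigroup $\e^{-\mu_k s}T(s)$ is locally equicontinuous with generator $A-\mu_k$, so \prettyref{prop:rescale_identities}, Eq.~\eqref{eq:rescale_iden_1}, yields $w_i^{(k)}\in D(A)$ and
\[
(A-\mu_k)w_i^{(k)}=\frac{\e^{-\mu_k t}}{t}\bigl(T(t)x_i-\lambda x_i\bigr)\xrightarrow{i\in I}0
\]
by (ii). Under (a), local equicontinuity applied to the bounded net $(x_i)$ makes each $(w_i^{(k)})_{i\in I}$ bounded in $X$.

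The key claim is that some $(w_i^{(k)})_{i\in I}$ fails to converge to $0$. Granted this, I pass to a subnet (subsequence if $I=\N$) on which $q_0(w_i^{(k)})$ stays above some $\varepsilon>0$ for a fixed $q_0\in\Gamma_X$ and set $\mu\coloneqq \mu_k$: in case (a), boundedness of $(w_i^{(k)})$ places $\mu\in\sigma_{\bap}(A)$ (respectively $\sigma_{\bap}^{\seq}(A)$ if $I=\N$); in case (b) we at least get $\mu\in\sigma_{\ap}(A)$ (respectively $\sigma_{\ap}^{\seq}(A)$), which is enough since the approximate point spectrum does not require bounded eigenvectors, and in both cases $\e^{\mu t}=\lambda$. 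To prove the main claim I view $w_i^{(k)}$ as the $k$-th Fourier coefficient of $h_i(s)\coloneqq \e^{-\mu_0 s}T(s)x_i$ on the circle of length $t$. Using condition (iii), local equicontinuity, and the identity $T(s)x_i-T(s')x_i=T(s')(T(s-s')x_i-x_i)$, I show that $\{h_i:i\in I\}$ is uniformly $q$-equicontinuous on $[0,t]$ for every $q\in\Gamma_X$, while $h_i(t)-h_i(0)=\lambda^{-1}(T(t)x_i-\lambda x_i)\to 0$ by (ii) makes the $h_i$'s uniformly approximately $t$-periodic. Assuming for contradiction that $w_i^{(k)}\to 0$ for every $k\in\Z$, the Fej\'er means
\[
\sigma_N h_i(0)=\sum_{|k|\leq N}\Bigl(1-\frac{|k|}{N+1}\Bigr)w_i^{(k)}
\]
tend to $0$ in $i$ for every fixed $N$, while the standard Fej\'er-kernel estimate applied to the corrected $t$-periodic functions $\bar h_i(s)\coloneqq h_i(s)-(s/t)(h_i(t)-h_i(0))$ gives $\sup_{i\in I}q(h_i(0)-\sigma_N h_i(0))<\varepsilon$ for $N$ sufficiently large. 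Combining the two yields $q(x_i)=q(h_i(0))<2\varepsilon$ eventually, contradicting (i).

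The main obstacle is that the Fej\'er-kernel estimate on the middle interval $[\delta,t-\delta]$, where the kernel $K_N$ decays only like $1/N$, requires a uniform bound on $\sup_{s\in[0,t]}q(h_i(s))$ across $i\in I$. In case (a) this is immediate from boundedness of $(x_i)$ and local equicontinuity. In case (b) I would first rescale via \prettyref{prop:rescale_sg} to the equicontinuous semigroup $\tilde T(s)\coloneqq \e^{-\omega s}T(s)$ -- which replaces $\lambda$ by $\e^{-\omega t}\lambda$ and $\mu$ by $\mu-\omega$, since $\e^{(\mu-\omega)t}=\e^{-\omega t}\lambda$ -- and then exploit the global equicontinuity of $(\tilde T(s))$ together with (iii) to carry out the uniform-in-$i$ middle-part estimate. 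This is the step I expect to require the most careful bookkeeping, since quasi-equicontinuity does not by itself yield boundedness of $(x_i)$; the passage to the sequence case $I=\N$ is then a straightforward diagonal extraction from the subnet chosen in the Fej\'er step.
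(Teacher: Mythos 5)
Your part \ref{it:bounded_ap} is essentially sound and takes a genuinely different route from the paper. You work directly with the vector-valued orbit maps $h_i(s)=\e^{-\mu_0 s}T(s)x_i$, read the $w_i^{(k)}$ as their Fourier coefficients, and use Fej\'er summation to show that if every coefficient net vanishes then $q(x_i)\to 0$ for every $q$, contradicting \ref{it:bounded_ap_11}. The paper instead scalarises first: after passing to a subnet with $p(x_i)\geq\varepsilon$ it picks Hahn--Banach functionals $x_i'$ with $x_i'(x_i)=p(x_i)$ and $|x_i'|\leq p$, applies Arzel\`a--Ascoli to the scalar functions $g_i(s)=x_i'(T(s)a_ix_i)$ to extract a uniform limit $g$ with $g(0)>0$, and then uses a non-vanishing Fourier coefficient of $g$ to single out the right $\mu_k$. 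Your version is arguably more elementary (Fej\'er in place of Arzel\`a--Ascoli plus Carleson) and needs no compactness in $\mathcal{C}([0,1])$; the paper's scalarisation, however, is exactly what makes its argument survive without boundedness of $(x_i)_{i\in I}$, which is the point of part \ref{it:quasi_ap}. (Your dismissal of $t=0$ is as brisk as the paper's implicit one; there the claim reduces to $\sigma_{\bap}(A)\neq\varnothing$, which deserves a word, but I will not press this.)

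The genuine gap is in part \ref{it:quasi_ap}, and your proposed fix does not close it. The Fej\'er middle-interval estimate requires $\sup_{i\in I}\sup_{s\in[0,t]}q(h_i(s))<\infty$, and rescaling to an equicontinuous semigroup only yields $q(T(s)x_i)\leq Cp(x_i)$ for some \emph{other} seminorm $p$, with $p(x_i)$ possibly unbounded; equicontinuity does not convert an unbounded net into a bounded one. Normalising, say $y_i\coloneqq x_i/p(x_i)$ after passing to a subnet with $p(x_i)\geq\varepsilon$, controls only the single seminorm used for the normalisation, while your contradiction argument needs the uniform bound for \emph{every} $q$ (or at least for the $q_0$ witnessing \ref{it:bounded_ap_11}, and then $q_0(y_i)=q_0(x_i)/p(x_i)$ may tend to $0$ anyway, so no contradiction results). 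The paper's way out combines two ingredients you are missing: it normalises by the special seminorm $\widetilde{q}(x)=\sup_{s\geq 0}q(\e^{-\omega s}T(s)x)$, which satisfies the submultiplicative bound $\widetilde{q}(T(s)x)\leq\e^{\omega s}\widetilde{q}(x)$, and it scalarises through functionals dominated by that single $\widetilde{q}$, so that $|g_i(s)|\leq\widetilde{q}(T(s)x_i)/\widetilde{q}(x_i)\leq\e^{\omega s}$ gives the uniform bound without any boundedness of $(x_i)_{i\in I}$. Without an analogue of this device your case \ref{it:quasi_ap} remains unproved; with it, you would in effect be reproducing the paper's construction.
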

\begin{proof}
W.l.o.g.~we may assume $\lambda=1$ and $t=1$ by \prettyref{prop:rescale_sg}. 
Let $(a_i)_{i\in I}$ be a bounded net in $\C$.
We define $f_{i}\colon [0,1]\to X$, $f_{i}(s)\coloneqq T(s)a_i x_i$, for $i\in I$. 
Let $q\in\Gamma_X$ and $\eta>0$. By the local equicontinuity of the semigroup there are 
$p_0\in\Gamma_X$ and $C_0\geq 0$ such that $q(T(s)x)\leq C_0p_0(x)$ for all $s\in [0,1]$ 
and $x\in X$. Hence we obtain for all $s,r\in [0,1]$ and $i\in I$
\begin{align*}
  q(T(r)a_i x_i-T(s)a_i x_i)
&=q(T(\min\{r,s\})(T(|s-r|)a_i x_i-a_i x_i))\\
&\leq C_0 p_0(T(|s-r|)a_i x_i-a_i x_i).
\end{align*}
By \ref{it:bounded_ap_13} and the boundedness of $(a_i)_{i\in I}$ there is $\delta>0$ such that for all 
$s,r\in [0,1]$ with $|s-r|<\delta$ it holds that $p_0(T(|s-r|)a_i x_i-a_i x_i)\leq \eta$, implying
\[
q(T(r)a_i x_i-T(s)a_i x_i)\leq C_0\eta .
\] 
We deduce that the family $(f_{i})_{i\in I}$ is (uniformly) equicontinuous. 

In case \ref{it:quasi_ap} the semigroup is quasi-equicontinuous and we choose 
$\omega\in\R$ from \prettyref{defn:semigroup} \ref{it:quasi-equi}. For $q\in \Gamma_X$ we set
\[
\widetilde{q}(x)\coloneqq \sup_{s\geq 0}q(\e^{-\omega s}T(s)x),\quad x\in X.
\]
By \prettyref{prop:rescale_sg} and \cite[Remark 2.2 (i), p.~256]{albanese2013} 
(cf.~\cite[Lemma 2.2, p.~802]{wegner2016}) $\widetilde{\Gamma}_X\coloneqq \{\widetilde{q}\;|\;q\in\Gamma_X\}$ 
defines a fundamental system of seminorms inducing the topology of $X$ which fulfils 
$\widetilde{q}(T(s)x)\leq \e^{\omega s}\widetilde{q}(x)$ for all $s\geq 0$ and $x\in X$.
Due to \ref{it:bounded_ap_11} there are $\varepsilon >0$ and $p\in\Gamma_{X}$ in case \ref{it:bounded_ap} and 
$p\in\widetilde{\Gamma}_X$ in case \ref{it:quasi_ap}, respectively, such that for all 
$j\in I$ there is $i\in I$, $i\geq j$, with $p(x_i)\geq\varepsilon$. 
By passing to a subnet (a subsequence in the case $I=\N$)\footnote{The construction of the subsequence 
in the case $I=\N$ is quite obvious and so we omit it. The subnet in the sense of \cite[p.~188]{munkres2000} 
is constructed as follows in the general case. 
For $j\in I$ let $M_{j}\coloneqq\{i\in I\;|\;i\geq j,\,p(x_i)\geq\varepsilon\}$. 
Then $M_{j}\neq\varnothing$ for all $j\in I$ and we set $M\coloneqq\{(i,j)\;|\;j\in I,\,i\in M_{j}\}$. 
We define a preorder $\leq_{M}$ on $M$ by $(i_0,j_0)\leq_{M}(i_1,j_1)$ $\colon\Leftrightarrow$ $i_0\leq i_1$. 
Equipped with this preorder $M$ becomes a directed set. Indeed, for $(i_0,j_0),(i_1,j_1)\in M$ there is 
$j_2\in I$ such that $i_0 , i_1\leq j_2$ since $I$ is directed. Then there is $i_2\in I$, $i_2\geq j_2$, such that 
$p(x_{i_2})\geq\varepsilon$. Thus $(i_0,j_0),(i_1,j_1)\leq_{M} (i_2,j_2)$. Further, the function $f\colon M\to I$, 
$(i,j)\mapsto i$, is monotone by the definition of $\leq_{M}$, and $f(M)$ is also cofinal since for every $j\in I$ 
there is $i\in I$, $i\geq j$, with $p(x_{i})\geq\varepsilon$, which implies that $(i,j)\in M$ and $f(i,j)=i\geq j$. 
Hence $(z_{i,j})_{(i,j)\in M}$ defined by $z_{i,j}\coloneqq x_{f(i,j)}=x_i$ for $(i,j)\in M$ is a subnet 
of $(x_i)_{i\in I}$.}, which we still denote by $(x_i)_{i\in I}$, 
we may assume that $p(x_i)\geq\varepsilon$ for all $i\in I$.
Due to the Hahn--Banach theorem there are $x_i'\in X'$ such that 
$x_i'(x_i)=p(x_i)\geq\varepsilon$ and $|x_i'(x)|\leq p(x)$ for all $x\in X$ and $i\in I$. 
Now, we set $g_{i}\colon [0,1]\to \C$, $g_i(s)\coloneqq x_i'( T(s)a_i x_i)$.  
By the local equicontinuity of the semigroup there are $p_{1}\in\Gamma_X$ and $C_{1}\geq 0$ such that 
$p(T(s)x)\leq C_{1}p_{1}(x)$ for all $s\in [0,1]$ and $x\in X$. 
In combination with the uniform equicontinuity of $(f_{i})_{i\in I}$ this implies that $(g_i)_{i\in I}$ 
is also uniformly equicontinuous. 

In case \ref{it:bounded_ap} we choose $a_i\coloneqq 1$ for $i\in I$. 
The boundedness of $(x_i)_{i\in I}$ implies that
there is $C_2\geq 0$ such that $p_1(x_i)\leq C_2$ for all $i\in I$.
Therefore we have 
\begin{align*}
 \sup_{s\in [0,1]}\sup_{i\in I}|g_i(s)|
&=\sup_{s\in [0,1]}\sup_{i\in I}|x_i'(T(s)x_i)|
 \leq \sup_{s\in [0,1]}\sup_{i\in I}p(T(s)x_i)
 \leq C_1\sup_{i\in I}p_1(x_i)\\
&\leq C_1 C_2 .
\end{align*}

In case \ref{it:quasi_ap} we choose $a_i\coloneqq \frac{1}{p(x_i)}$ for $i\in I$ and observe that 
$|a_i |\leq \frac{1}{\varepsilon}$ for all $i\in I$. Thus $(a_i)_{i\in I}$ is bounded. 
Since $p\in\widetilde{\Gamma}_X$ in case \ref{it:quasi_ap}, we have
\begin{align*}
 \sup_{s\in [0,1]}\sup_{i\in I}|g_i(s)|
&=\sup_{s\in [0,1]}\sup_{i\in I}\Bigl|x_i'\Bigl(T(s)\frac{x_i}{p(x_i)}\Bigr)\Bigr|
 \leq \sup_{s\in [0,1]}\sup_{i\in I}p\Bigl(T(s)\frac{x_i}{p(x_i)}\Bigr)\\
&\leq \sup_{s\in [0,1]}\sup_{i\in I}\e^{\omega s}p\Bigl(\frac{x_i}{p(x_i)}\Bigr)
 \leq \e^{|\omega|}.
\end{align*}
Hence $(g_i)_{i\in I}$ is uniformly bounded in both cases. 

The Arzel\`{a}--Ascoli theorem yields that $(g_i)_{i\in I}$ is relatively compact in the Banach space 
$\mathcal{C}([0,1])$ of $\C$-valued continuous functions on $[0,1]$. 
Thus there is a subnet (a subsequence in the case $I=\N$), 
which we still denote by $(g_i)_{i\in I}$, that converges to some $g\in\mathcal{C}([0,1])$. 
Since 
\[
 g(0)
=\lim_{i\in I}g_i(0)
=\lim_{i\in I}x_i'(T(0)a_i x_i)
=\lim_{i\in I}x_i'(a_i x_i)
\geq \min(\varepsilon, 1)
> 0,
\]
the continuous function $g$ on $[0,1]$, which we can extend to a $1$-periodic function on $\R$ 
(replacing the value of $g$ at $s=1$ by $g(0)$ if needed), has a 
non-zero Fourier coefficient by Carleson's theorem. So, there is $k\in\Z$ such that 
\[
\int_{0}^{1}\e^{-2\pi\mathsf{i}k s}g(s)\d s \neq 0.
\]
We set $\mu_k\coloneqq 2\pi\mathsf{i} k$ and $z_{i}\coloneqq \int_{0}^{1}\e^{-\mu_k s}T(s)x_i\d s$ for $i\in I$. 
We note that $z_i\in D(A)$ for all $i\in I$ by \prettyref{prop:rescale_identities} and 
\[
 \lim_{i\in I}(A-\mu_k)z_i
\underset{\mathclap{\eqref{eq:rescale_iden_1}}}{=}\lim_{i\in I}(\e^{-\mu_{k}}T(1)-1)x_i
=\lim_{i\in I}(T(1)-1)x_i
\underset{\mathclap{\text{\ref{it:bounded_ap_12}}}}{=}0 .
\]
In addition, we observe that
\[
 \Bigl|x_i'(a_i z_i)-\int_{0}^{1}\e^{-2\pi\mathsf{i}k s}g(s)\d s\Bigr|
=\Bigl|\int_{0}^{1}\e^{-2\pi\mathsf{i}k s}(g_i-g)(s)\d s\Bigr|
\leq \sup_{s\in [0,1]}|g_i(s)-g(s)|
\]
for all $i\in I$ and thus $\lim_{i\in I}x_i'(a_i z_i)=\int_{0}^{1}\e^{-2\pi\mathsf{i}k s}g(s)\d s$ 
because $(g_i)_{i\in I}$ converges to $g$ in $\mathcal{C}([0,1])$.
Next, we claim that there are $j\in I$ and $\delta>0$ such that $|x_i'(a_i z_i)|\geq\delta$ for all $i\in I$ 
with $i\geq j$. 
Indeed, suppose to the contrary that for all $j\in I$ and $\delta>0$ there is $i_j\in I$, $i_j\geq j$, 
such that $|x_{i_j}'(a_{i_j}z_{i_j})|<\delta $. 
Since $\lim_{i\in I}x_i'(a_i z_i)=\int_{0}^{1}\e^{-2\pi\mathsf{i}k s}g(s)\d s$, 
there is $j_\delta \in I$ such that for every $i\in I$, $i\geq j_\delta$, it holds that 
$|x_i'(a_i z_i)-\int_{0}^{1}\e^{-2\pi\mathsf{i}k s}g(s)\d s|\leq \delta$. 
Thus we obtain
\begin{align*}
      \Bigl|\int_{0}^{1}\e^{-2\pi\mathsf{i}k s}g(s)\d s \Bigr|
&\leq |x_{i_{j_\delta}}'(a_{i_{j_\delta}}z_{i_{j_\delta}})|
       +\Bigl|x_{i_{j_\delta}}'(a_{i_{j_\delta}}z_{i_{j_\delta}})
       -\int_{0}^{1}\e^{-2\pi\mathsf{i}k s}g(s)\d s \Bigr|
< 2\delta .
\end{align*}
As $\delta>0$ is arbitrary, we get $\int_{0}^{1}\e^{-2\pi\mathsf{i}k s}g(s)\d s=0$, which is a contradiction. 
We derive that
\[
p(z_i)\geq |x_i'(z_i)|=\frac{1}{|a_i |}|x_i'(a_i z_i)|\geq\min(\varepsilon, 1)\delta>0
\]
for all $i\in I$ with $i\geq j$. Hence $(z_i)_{i\in I}$ does not converge to $0$. 
We deduce that $\mu_k\in\sigma_{\ap}(A)$ ($\mu_k\in\sigma_{\ap}^{\seq}(A)$ if $I=\N$) with $\lambda=1=\e^{\mu_k t}$ 
in both cases.

Furthermore, in case \ref{it:bounded_ap} for $q\in\Gamma_X$ we choose $p_{0}\in\Gamma_X$ and $C_0 \geq 0$ 
as above by the local equicontinuity of the semigroup. 
Then there is $C_3\geq 0$ such that $p_0(x_i)\leq C_3$ for all $i\in I$ since $(x_i)_{i\in I}$ 
is bounded. This implies that
\[
 q(z_i)
\leq \sup_{s\in [0,1]} q(T(s)x_i)
\leq C_0 p_0(x_i)
\leq C_0 C_3
\]
for all $i\in I$, which means that $(z_i)_{i\in I}$ is bounded. 
We conclude that $\mu_{k}\in\sigma_{\bap}(A)$ ($\mu_k\in\sigma_{\bap}^{\seq}(A)$ if $I=\N$) in case \ref{it:bounded_ap}
\end{proof}

We will focus on \prettyref{prop:bounded_ap} \ref{it:bounded_ap} for the remaining part of this section since 
we do not know how to tackle condition \ref{it:bounded_ap_13} in case \ref{it:quasi_ap} without assuming that 
$(x_i)_{i\in I}$ is bounded. We recall that a Hausdorff locally convex space $X$ is a \emph{generalised Schwartz space} 
if every bounded subset of $X$ is already precompact (see \cite[5.2.50 Definition, p.~93]{kruse2023}). 
In particular, Schwartz spaces and semi-Montel spaces are generalised 
Schwartz spaces but infinite-dimensional Banach spaces are not. 

\begin{rem}\label{rem:gen_Schwartz}
Let $X$ be a Hausdorff locally convex space and $(T(t))_{t\geq 0}$ a strongly continuous 
locally equicontinuous semigroup on $X$. 
If $X$ is a generalised Schwartz space, then condition \ref{it:bounded_ap_13} of 
\prettyref{prop:bounded_ap} is fulfilled for all bounded nets $(x_i)_{i\in I}$ in $X$. 
Indeed, by the strong continuity of the semigroup we have $\lim_{s\to 0\rlim}T(s)x-x=0$ for all $x\in X$.
Since the set $\{x_{i}\;|\;i\in I\}$ is bounded in $X$, it is precompact as $X$ is a generalised Schwartz space. 
Hence we obtain
\[
\lim_{s\to 0\rlim}\sup_{i\in I} q(T(s)x_i-x_i)=0
\]
for all $q\in\Gamma_X$ by \cite[8.5.1 Theorem (b), p.~156]{jarchow1981} and the local equicontinuity of the semigroup. 
\end{rem}

Restricting to sequentially complete generalised Schwartz spaces, we get the following spectral mapping theorems 
for the bounded (sequential) approximate point spectrum of strongly continuous locally equicontinuous semigroups. 

\begin{cor}\label{cor:spec_bounded_ap}
Let $X$ be a sequentially complete generalised Schwartz space and $(T(t))_{t\geq 0}$ a strongly 
continuous locally equicontinuous semigroup on $X$ with generator $(A,D(A))$.
Then
\[
\sigma_{\bap}(T(t))\setminus\{0\}=\e^{t \sigma_{\bap}(A)}\quad \text{and}\quad \sigma_{\bap}^{\seq}(T(t))\setminus\{0\}=\e^{t \sigma_{\bap}^{\seq}(A)}
\] 
hold for all $t\geq 0$.
\end{cor}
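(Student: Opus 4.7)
The plan is to derive both spectral mapping identities directly by pairing the spectral inclusion theorem already established in \prettyref{thm:spec_incl} with \prettyref{prop:bounded_ap}, using the generalised Schwartz hypothesis solely to discharge the technical condition \ref{it:bounded_ap_13} of that proposition. Fix $t\geq 0$. The inclusions $\e^{t\sigma_{\bap}(A)}\subseteq\sigma_{\bap}(T(t))\setminus\{0\}$ and $\e^{t\sigma_{\bap}^{\seq}(A)}\subseteq\sigma_{\bap}^{\seq}(T(t))\setminus\{0\}$ will be read off immediately from \prettyref{thm:spec_incl} \ref{it:spec_incl_4b} and \ref{it:spec_incl_4c}, since $(T(s))_{s\geq 0}$ is strongly continuous and locally equicontinuous and the complex exponential is never zero.

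For the reverse inclusion $\sigma_{\bap}(T(t))\setminus\{0\}\subseteq\e^{t\sigma_{\bap}(A)}$, the case $t=0$ is handled separately by direct inspection of $T(0)=\id$, along the lines of the opening of the proof of \prettyref{thm:spec_point}. So suppose $t>0$ and take $\lambda\in\sigma_{\bap}(T(t))\setminus\{0\}$. By definition there is a bounded net $(x_i)_{i\in I}$ in $X$ not converging to $0$ with $\lim_{i\in I}(T(t)-\lambda)x_i=0$, so conditions \ref{it:bounded_ap_11} and \ref{it:bounded_ap_12} of \prettyref{prop:bounded_ap} are already in place. The one thing left to check is \ref{it:bounded_ap_13}, and this is exactly where the generalised Schwartz property intervenes: the bounded set $\{x_i\mid i\in I\}$ is precompact, so \prettyref{rem:gen_Schwartz} supplies \ref{it:bounded_ap_13} at no extra cost. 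Applying \prettyref{prop:bounded_ap} \ref{it:bounded_ap} then yields some $\mu\in\sigma_{\bap}(A)$ with $\lambda=\e^{\mu t}$, i.e.~$\lambda\in\e^{t\sigma_{\bap}(A)}$.

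The sequential identity will be proved by the identical argument, specialised to $I=\N$ and invoking the bracketed sequential conclusion of \prettyref{prop:bounded_ap} \ref{it:bounded_ap}. Since the whole dynamical content of the result, namely the construction of an approximate eigenvector of $A$ from one of $T(t)$ by an Arzel\`a--Ascoli plus Fourier-coefficient argument, has already been carried out inside \prettyref{prop:bounded_ap}, the main obstacle has been dealt with upstream; the corollary itself is essentially an assembly step whose only point of ingenuity is recognising that the definition of a generalised Schwartz space is tailored precisely to convert boundedness of $(x_i)_{i\in I}$ into the uniform small-time equicontinuity demanded by \ref{it:bounded_ap_13}.
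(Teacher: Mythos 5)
Your proposal is correct and follows essentially the same route as the paper: the inclusions ``$\supseteq$'' from \prettyref{thm:spec_incl} \ref{it:spec_incl_4b} and \ref{it:spec_incl_4c}, and the inclusions ``$\subseteq$'' by verifying condition \ref{it:bounded_ap_13} via \prettyref{rem:gen_Schwartz} and then applying \prettyref{prop:bounded_ap} \ref{it:bounded_ap}. The separate treatment of $t=0$ is harmless but not needed, since \prettyref{prop:bounded_ap} is stated for all $t\geq 0$.
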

\begin{proof}
Due to \prettyref{thm:spec_incl} \ref{it:spec_incl_4b} and \ref{it:spec_incl_4c} we only need to prove the 
inclusions ``$\subseteq$''.  
Let $t\geq 0$ and $\lambda\in\C$, $\lambda\neq 0$, such that there is a bounded net $(x_i)_{i\in I}$ 
(sequence if $I=\N$) in $X$ that does not converge to $0$ and fulfils $\lim_{i\in I} T(t)x_i -\lambda x_i=0$. 
Due to \prettyref{prop:bounded_ap} \ref{it:bounded_ap} and \prettyref{rem:gen_Schwartz} there is 
$\mu\in\sigma_{\bap}(A)$ ($\mu\in\sigma_{\bap}^{\seq}(A)$ if $I=\N$) such that $\lambda=\e^{\mu t}$, 
finishing the proof.
\end{proof}

If we want to avoid the restriction to generalised Schwartz spaces, then we need to impose stronger conditions on 
the semigroup, namely eventual uniform continuity. This is our next goal. However, we start with a somewhat converse 
of \prettyref{prop:bounded_ap} \ref{it:bounded_ap}.

\begin{prop}\label{prop:bounded_ap_reverse}
Let $X$ be a sequentially complete Hausdorff locally convex space and $(T(t))_{t\geq 0}$ a strongly continuous 
locally equicontinuous semigroup on $X$ with generator $(A,D(A))$. 
If there are $\mu\in\C$ and a bounded net $(x_i)_{i\in I}$ in $D(A)$ such that 
$(Ax_i-\mu x_i)_{i\in I}$ is bounded, then
\[
\lim_{s\to 0\rlim}\sup_{i\in I} q(T(s)x_i-x_i)=0
\]
for all $q\in\Gamma_X$. 
\end{prop}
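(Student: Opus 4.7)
The plan is to exploit identity \eqref{eq:rescale_iden_2} to convert the boundedness hypothesis on $(A-\mu)x_i$ into the desired uniform continuity at zero. Setting $y_i\coloneqq (A-\mu)x_i$, the assumption is simply that $(y_i)_{i\in I}$ is a bounded net in $X$. Applying \eqref{eq:rescale_iden_2} with $\lambda\coloneqq \mu$ and $x\coloneqq x_i\in D(A)$ yields
\[
\e^{-\mu s}T(s)x_i - x_i = \int_{0}^{s}\e^{-\mu r}T(r)y_i\,\d r
\]
for all $s\geq 0$ and $i\in I$. Multiplying by $\e^{\mu s}$ and rearranging gives the decomposition
\[
T(s)x_i - x_i = (\e^{\mu s}-1)x_i + \int_{0}^{s}\e^{\mu(s-r)}T(r)y_i\,\d r,
\]
which will be the workhorse of the proof.

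Next, I would fix an arbitrary $q\in\Gamma_X$ and bound each summand uniformly in $i$. The boundedness of $(x_i)_{i\in I}$ provides $M_{q}\coloneqq\sup_{i\in I}q(x_i)<\infty$. For the integral term, local equicontinuity of $(T(t))_{t\geq 0}$ supplies $p\in\Gamma_X$ and $C\geq 0$ with $q(T(r)z)\leq Cp(z)$ for all $r\in[0,1]$ and $z\in X$, and the boundedness of $(y_i)_{i\in I}$ yields $N_{p}\coloneqq\sup_{i\in I}p(y_i)<\infty$. Since $|\e^{\mu(s-r)}|\leq \e^{|\re(\mu)|s}$ for all $r\in[0,s]$, this leads, for $s\in[0,1]$, to
\[
\sup_{i\in I}q(T(s)x_i - x_i) \leq |\e^{\mu s}-1|\,M_{q} + Cs\,\e^{|\re(\mu)|s}\,N_{p},
\]
and the right-hand side tends to $0$ as $s\to 0\rlim$, which is precisely the claim.

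There is no genuine obstacle here: once one recognises that \eqref{eq:rescale_iden_2} turns $T(s)x_i-x_i$ into an integral that only involves the controlled quantity $y_i$, the argument reduces to a one-line seminorm estimate. The only mildly delicate point is that the seminorm $p$ appearing in the local-equicontinuity bound typically differs from $q$; this causes no problem because the boundedness of $(y_i)_{i\in I}$ is assumed in the topology of $X$, hence $p(y_i)$ is automatically uniformly bounded in $i$ for every $p\in\Gamma_X$.
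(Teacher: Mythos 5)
Your proof is correct and follows essentially the same route as the paper: both arguments rest on identity \eqref{eq:rescale_iden_2} together with local equicontinuity and the boundedness of the two nets. The only (immaterial) difference is that the paper applies \eqref{eq:rescale_iden_2} with $\lambda=0$ and splits $p(Ax_i)\leq p(Ax_i-\mu x_i)+|\mu|p(x_i)$ inside the estimate, whereas you apply it with $\lambda=\mu$ and carry the extra term $(\e^{\mu s}-1)x_i$ outside the integral.
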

\begin{proof}
We observe that $T(s)x_i-x_i=\int_{0}^{s}T(r)Ax_i\d r$ for all $s\geq 0$ and $i\in I$ 
by Equation \eqref{eq:rescale_iden_2} from \prettyref{prop:rescale_identities}. 
Let $q\in\Gamma_X$. By the local equicontinuity of the semigroup there are 
$p\in\Gamma_X$ and $C\geq 0$ such that for all  $s\geq 0$ and $i\in I$
\[
     q(T(s)x_i-x_i)
\leq s\sup_{r\in [0,s]}q(T(r)Ax_i)
\leq C s p(Ax_i)
\leq C s ( p(Ax_i-\mu x_i)+|\mu|p(x_i)).
\]
By the boundedness of $(x_i)_{i\in I}$ and $(Ax_i-\mu x_i)_{i\in I}$ there is $C_1\geq 0$ such that 
\[
q(T(s)x_i-x_i)\leq CC_1 (1+|\mu|)s
\]
for all $s\geq 0$ and $i\in I$, implying $\lim_{s\to 0\rlim}\sup_{i\in I} q(T(s)x_i-x_i)=0$.
\end{proof}

This result enables us to fully generalise \cite[Chap.~IV, 3.9 Lemma, p.~279]{engel_nagel2000} 
in the case of sequences, i.e.~$I=\N$, next.

\begin{cor}\label{cor:bounded_seq_ap}
Let $X$ be a sequentially complete Hausdorff locally convex space and $(T(t))_{t\geq 0}$ a strongly continuous 
locally equicontinuous semigroup on $X$ with generator $(A,D(A))$. Then the following two assertions are equivalent 
for $t\geq 0$ and $\lambda\in\C$, $\lambda\neq 0$.
\begin{enumerate}[label=\upshape(\alph*), leftmargin=*]
\item \label{it:bounded_ap_seq_1} There is a bounded sequence $(x_i)_{i\in \N}$ in $X$ such that 
\begin{enumerate}[label=\upshape(\roman*), leftmargin=*, widest=iii]
\item \label{it:bounded_ap_seq_11} $(x_i)_{i\in\N}$ does not converge to $0$, 
\item \label{it:bounded_ap_seq_12} $\lim_{i\to\infty} T(t)x_i -\lambda x_i=0$, and 
\item \label{it:bounded_ap_seq_13} $\lim_{s\to 0\rlim}\sup_{i\in\N} q(T(s)x_i-x_i)=0$ for all $q\in\Gamma_X$. 
\end{enumerate}
\item \label{it:bounded_ap_seq_2} There is $\mu\in\sigma_{\bap}^{\seq}(A)$ such that $\lambda=\e^{\mu t}$.
\end{enumerate}
\end{cor}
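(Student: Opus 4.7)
The plan is to prove this corollary as a direct combination of three results already proved in the section, namely Proposition \ref{prop:bounded_ap}, Remark \ref{rem:eigenvectors} and Proposition \ref{prop:bounded_ap_reverse}, so no new ideas are required and I expect no real obstacles.

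For the implication \ref{it:bounded_ap_seq_1}$\Rightarrow$\ref{it:bounded_ap_seq_2} I would simply apply Proposition \ref{prop:bounded_ap} \ref{it:bounded_ap} with the index set $I\coloneqq\N$. The three hypotheses \ref{it:bounded_ap_seq_11}, \ref{it:bounded_ap_seq_12} and \ref{it:bounded_ap_seq_13} are exactly the conditions \ref{it:bounded_ap_11}, \ref{it:bounded_ap_12} and \ref{it:bounded_ap_13} of Proposition \ref{prop:bounded_ap}, and the conclusion of that proposition in the case $I=\N$ yields $\mu\in\sigma_{\bap}^{\seq}(A)$ with $\lambda=\e^{\mu t}$.

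For the converse implication \ref{it:bounded_ap_seq_2}$\Rightarrow$\ref{it:bounded_ap_seq_1} I would start from $\mu\in\sigma_{\bap}^{\seq}(A)$ with $\lambda=\e^{\mu t}$. By the definition of the bounded sequential approximate point spectrum, there exists a bounded sequence $(x_i)_{i\in\N}\subseteq D(A)$ which does not converge to $0$ and satisfies $\lim_{i\to\infty}(A-\mu)x_i=0$. Condition \ref{it:bounded_ap_seq_11} is thus immediate. Condition \ref{it:bounded_ap_seq_12} follows from Remark \ref{rem:eigenvectors} \ref{it:eigenvec_1}, which says that any sequential approximate eigenvector of $A$ corresponding to $\mu$ is also a sequential approximate eigenvector of $T(t)$ corresponding to $\e^{\mu t}=\lambda$; equivalently, one can argue directly from \eqref{eq:rescale_iden_2} as in the proof of \prettyref{thm:spec_incl} \ref{it:spec_incl_4} with $\lambda$ there replaced by $\mu$. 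Finally, since $(Ax_i-\mu x_i)_{i\in\N}$ converges to $0$ in $X$, it is in particular bounded, so Proposition \ref{prop:bounded_ap_reverse} applied to $(x_i)_{i\in\N}$ yields condition \ref{it:bounded_ap_seq_13}.

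The only point that might require a brief justification is that the bounded sequence witnessing $\mu\in\sigma_{\bap}^{\seq}(A)$ can be chosen inside $D(A)$, but this is built into the definition of $\sigma_{\bap}^{\seq}(A)$ (and of $\sigma_{\ap}^{\seq}(A)$), so the construction goes through without further work.
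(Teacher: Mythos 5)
Your proposal is correct and follows essentially the same route as the paper's own proof: the forward implication via \prettyref{prop:bounded_ap}~\ref{it:bounded_ap} with $I\coloneqq\N$, and the converse by taking a bounded sequential approximate eigenvector in $D(A)$, invoking \prettyref{rem:eigenvectors}~\ref{it:eigenvec_1} for condition \ref{it:bounded_ap_seq_12}, and applying \prettyref{prop:bounded_ap_reverse} (noting that $(Ax_i-\mu x_i)_{i\in\N}$ is bounded as a convergent sequence) for condition \ref{it:bounded_ap_seq_13}. No gaps.
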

\begin{proof}
``\ref{it:bounded_ap_seq_1}$\Rightarrow$\ref{it:bounded_ap_seq_2}'' This implication follows from \prettyref{prop:bounded_ap} \ref{it:bounded_ap} with $I\coloneqq\N$.

``\ref{it:bounded_ap_seq_2}$\Rightarrow$\ref{it:bounded_ap_seq_1}'' Since $\mu\in\sigma_{\bap}^{\seq}(A)$ such that 
$\lambda=\e^{\mu t}$, there is a bounded sequence $(x_i)_{i\in \N}$ in $D(A)$ that does not converge to $0$ 
and fulfils $\lim_{i\to\infty }Ax_i-\mu x_i=0$. Hence $(Ax_i-\mu x_i)_{i\in\N}$ is bounded as a convergent sequence.
An application of \prettyref{rem:eigenvectors} \ref{it:eigenvec_1} and \prettyref{prop:bounded_ap_reverse} 
with $I\coloneqq\N$ finishes the proof. 
\end{proof}

\begin{prop}\label{prop:bounded_ap_uniform}
Let $X$ be a Hausdorff locally convex space and $(T(t))_{t\geq 0}$ an eventually uniformly continuous 
locally equicontinuous semigroup on $X$. 
If there are a bounded net $(x_i)_{i\in I}$ in $X$, $t_0\geq t_{\ev}$ and $c\in\C$, $c\neq 0$, such that 
$\{T(t_0)x_i-c x_i\;|\;i\in I\}$ is precompact in $X$, then 
\[
\lim_{s\to 0\rlim}\sup_{i\in I} q(T(s)x_i-x_i)=0
\]
for all $q\in\Gamma_X$.
\end{prop}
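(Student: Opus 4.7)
The plan is to exploit the spectral-type identity $T(t_0)x_i = c x_i + y_i$ with $y_i \coloneqq T(t_0)x_i - c x_i$ ranging in a precompact set, in order to split $T(s)x_i - x_i$ into a piece controlled by eventual uniform continuity on the bounded set $\{x_i\}$ and a piece controlled by strong continuity on the precompact set $\{y_i\}$.

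First, I would derive the key algebraic identity. Applying $T(s)$ to both sides of $T(t_0)x_i = cx_i + y_i$ and using the semigroup property $T(s+t_0)=T(s)T(t_0)$ yields $T(s+t_0)x_i = cT(s)x_i + T(s)y_i$, and subtracting $T(t_0)x_i = cx_i + y_i$ gives
\[
c\bigl(T(s)x_i - x_i\bigr) = \bigl(T(s+t_0)x_i - T(t_0)x_i\bigr) + \bigl(y_i - T(s)y_i\bigr).
\]
Since $c>0$, for every $q\in\Gamma_X$ it then suffices to prove that the two suprema
\[
\sup_{i\in I} q\bigl(T(s+t_0)x_i - T(t_0)x_i\bigr) \quad\text{and}\quad \sup_{i\in I} q\bigl(y_i - T(s)y_i\bigr)
\]
tend to $0$ as $s\to 0\rlim$.

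For the first supremum, since $t_0 \geq t_{\ev}$ the map $[t_{\ev},\infty)\to\cL_{\ob}(X)$, $t\mapsto T(t)$, is continuous by the definition of eventual uniform continuity; in particular, for the bounded set $M\coloneqq\{x_i\;|\;i\in I\}\subseteq X$ and every $q\in\Gamma_X$, the uniform-convergence-on-bounded-sets topology gives $\sup_{x\in M} q(T(t_0+s)x - T(t_0)x)\to 0$ as $s\to 0\rlim$, which dominates the first supremum.

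For the second supremum, the set $K\coloneqq\{y_i\;|\;i\in I\}$ is precompact in $X$ by assumption, and $(T(s))_{s\geq 0}$ is strongly continuous and locally equicontinuous, so $\lim_{s\to 0\rlim}T(s)y - y = 0$ pointwise. By the same argument as in \prettyref{rem:gen_Schwartz}, combining pointwise convergence on $K$ with local equicontinuity via \cite[8.5.1 Theorem (b), p.~156]{jarchow1981} upgrades this to uniform convergence on the precompact set $K$, giving $\sup_{y\in K} q(T(s)y - y)\to 0$ as $s\to 0\rlim$, which dominates the second supremum. I do not foresee a serious obstacle; the only subtle point is recognising that eventual uniform continuity is exactly what lets the first term be controlled uniformly on the merely bounded (not necessarily precompact) net $(x_i)$, which is the reason the hypothesis $t_0\geq t_{\ev}$ is invoked.
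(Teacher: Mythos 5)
Your proof is correct and follows essentially the same route as the paper's: the identity $c(T(s)x_i-x_i)=(T(s+t_0)-T(t_0))x_i-(T(s)y_i-y_i)$ is just a rearrangement of the paper's decomposition $x_i=\tfrac1c T(t_0)x_i-\tfrac1c(T(t_0)x_i-cx_i)$, and both arguments control the first piece by eventual uniform continuity on the bounded set $\{x_i\}$ and the second by strong continuity plus local equicontinuity on the precompact set $\{y_i\}$ via \cite[8.5.1 Theorem (b), p.~156]{jarchow1981}.
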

\begin{proof}
Let $r,s\geq 0$ and $q\in\Gamma_X$. Then we have $r+t_0,s+t_0\geq t_{\ev}$ and 
\[
 \sup_{i\in I}q(T(r)T(t_0)x_i-T(s)T(t_0)x_i)
=\sup_{i\in I}q((T(r+t_0)-T(s+t_0))x_i).
\]
Hence $(T(s))_{s\geq 0}$ is uniformly continuous on $\{T(t_0)x_i\;|\;i\in I\}$ as $(x_i)_{i\in I}$ is bounded 
and the semigroup eventually uniformly continuous on $X$. Further, the strong continuity of the semigroup implies that 
\[
\lim_{s\to 0\rlim}T(s)(T(t_0)x_i-c x_i)-(T(t_0)x_i-c x_i)=0
\]
for all $i\in I$. Since $\{T(t_0)x_i-c x_i\;|\;i\in I\}$ is precompact in $X$, we obtain
\[
\lim_{s\to 0\rlim}\sup_{i\in I} q(T(s)(T(t_0)x_i-c x_i)-(T(t_0)x_i-c x_i))=0
\]
for all $q\in\Gamma_X$ by \cite[8.5.1 Theorem (b), p.~156]{jarchow1981} and the local equicontinuity of the semigroup, 
so $(T(s))_{s\geq 0}$ is uniformly continuous on $\{T(t_0)x_i-c x_i\;|\;i\in I\}$.
Therefore $(T(s))_{s\geq 0}$ is uniformly continuous on $\{x_{i}\;|\;i\in I\}$ because 
$x_i=\frac{1}{c}T(t_0)x_i-\frac{1}{c}(T(t_0)x_i-cx_i)$ for all $i\in I$.
\end{proof}

\begin{rem}
Let $X$ be a Hausdorff locally convex space.
\begin{enumerate}[label=\upshape(\alph*), leftmargin=*]
\item If $X$ is quasi-complete, barrelled and has the Grothendieck and Dunford--Pettis properties 
(see \cite[p.~147--148]{albanese2010b}), then every strongly 
continuous semigroup on $X$ is already uniformly continuous by \cite[Theorem 7, p.~313]{albanese2012}. 
For instance, this is fulfilled by \cite[Corollary 3.8, p.~155]{albanese2010b} and 
\cite[11.5.2 Proposition, p.~230]{jarchow1981} if $X$ is a Montel space. 
\item An operator $S\in\cL(X)$ is called \emph{Montel} 
if it maps bounded sets to relatively compact sets (see \cite[p.~268]{albanese2013}). 
Clearly, any $S\in\cL(X)$ is Montel if $X$ is a semi-Montel space. 
If $(T(t))_{t\geq 0}$ is a strongly continuous locally equicontinuous semigroup on $X$ and there is $\widetilde{t}>0$ 
such that $T\left(\widetilde{t}\,\right)$ is Montel, then $(T(t))_{t\geq 0}$ is eventually uniformly continuous 
by \cite[Lemma 4.3, p.~270]{albanese2013}.
\end{enumerate}
\end{rem}

Now, we are able to prove the spectral mapping theorem for the bounded sequential point spectrum of 
eventually uniformly continuous locally equicontinuous semigroups which generalises 
\cite[Chap.~IV, 3.10 Spectral Mapping Theorem for Eventually Norm-Continuous Semigroups, p.~280]{engel_nagel2000} 
(cf.~\cite[Theorem 2.3.2, p.~37]{vanneerven1996}).

\begin{cor}\label{cor:spec_bounded_seq_ap}
Let $X$ be a sequentially complete Hausdorff locally convex space and $(T(t))_{t\geq 0}$ an 
eventually uniformly continuous locally equicontinuous semigroup on $X$ with generator $(A,D(A))$.
Then
\[
\sigma_{\bap}^{\seq}(T(t))\setminus\{0\}=\e^{t \sigma_{\bap}^{\seq}(A)}
\] 
holds for all $t\geq 0$.
\end{cor}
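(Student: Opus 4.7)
The plan is to prove the two inclusions separately. The inclusion ``$\supseteq$'' is immediate from \prettyref{thm:spec_incl} \ref{it:spec_incl_4c}, since $(T(t))_{t\geq 0}$ is in particular locally equicontinuous by assumption. For the reverse inclusion, the case $t=0$ is handled directly (both sides reduce to $\{1\}$ or to $\varnothing$), so I would fix $t>0$ and take $\lambda\in\sigma_{\bap}^{\seq}(T(t))\setminus\{0\}$. By the definition of the bounded sequential approximate point spectrum, there is a bounded sequence $(x_i)_{i\in\N}$ in $X$ which does not converge to $0$ and satisfies $\lim_{i\to\infty}T(t)x_i-\lambda x_i=0$.

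The strategy is to invoke \prettyref{cor:bounded_seq_ap} with the present $t$ and $\lambda$: once its condition \ref{it:bounded_ap_seq_13}, that is,
\[
\lim_{s\to 0\rlim}\sup_{i\in\N}q(T(s)x_i-x_i)=0\quad\text{for all }q\in\Gamma_X,
\]
is verified, we immediately obtain some $\mu\in\sigma_{\bap}^{\seq}(A)$ with $\lambda=\e^{\mu t}$, which is what we want. This uniform continuity at $0$ in $s$ is precisely the conclusion of \prettyref{prop:bounded_ap_uniform}, whose hypothesis requires a set of the form $\{T(t_0)x_i-c x_i\mid i\in\N\}$ to be precompact for some $t_0\geq t_{\ev}$ and some nonzero scalar $c$. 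To produce such data, I would choose $n\in\N$ with $nt\geq t_{\ev}$ and set $t_0\coloneqq nt$, $c\coloneqq\lambda^n$. Using the continuity of $T(t)\in\cL(X)$ and the telescoping identity
\[
T(t)^n x_i-\lambda^n x_i=T(t)\bigl(T(t)^{n-1}x_i-\lambda^{n-1}x_i\bigr)+\lambda^{n-1}\bigl(T(t)x_i-\lambda x_i\bigr),
\]
an easy induction on $n$ shows that $T(t_0)x_i-c x_i=T(t)^n x_i-\lambda^n x_i\to 0$ in $X$. Hence $\{T(t_0)x_i-c x_i\mid i\in\N\}$ is precompact as a convergent sequence together with its zero limit, \prettyref{prop:bounded_ap_uniform} applies and yields the required equicontinuity, and \prettyref{cor:bounded_seq_ap} closes the argument.

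The main technical obstacle is that \prettyref{prop:bounded_ap_uniform} is stated for $c>0$, while our natural choice $c=\lambda^n$ is in general a nonzero complex number. However, inspection of its proof shows that the scalar $c$ enters only through the decomposition $x_i=c^{-1}T(t_0)x_i-c^{-1}(T(t_0)x_i-cx_i)$, which is valid for every $c\in\C\setminus\{0\}$; so the proposition applies verbatim. The conceptual idea of the argument is that eventual uniform continuity lets one ``trade'' the given time $t$ for a sufficiently large multiple $nt\geq t_{\ev}$ where uniform continuity is available, and the precompactness of $\{T(nt)x_i-\lambda^n x_i\}$ as a null sequence is exactly what is needed to transfer this back to the required uniform continuity of $(T(s)x_i)$ at $s=0$, independently of $i$.
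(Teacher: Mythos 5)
Your proof is correct and follows essentially the same route as the paper: the inclusion ``$\supseteq$'' from \prettyref{thm:spec_incl} \ref{it:spec_incl_4c}, the telescoping identity to pass from $t$ to $kt\geq t_{\ev}$, precompactness of the resulting null sequence, and then \prettyref{prop:bounded_ap_uniform} followed by \prettyref{cor:bounded_seq_ap}. Your observation that \prettyref{prop:bounded_ap_uniform} is stated for $c>0$ but is needed (and remains valid) for nonzero complex $c=\lambda^{k}$ is a genuine point the paper passes over silently, and your justification via the decomposition $x_i=c^{-1}T(t_0)x_i-c^{-1}(T(t_0)x_i-cx_i)$ is exactly right.
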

\begin{proof}
Due to \prettyref{thm:spec_incl} \ref{it:spec_incl_4c} we only need to prove the inclusion ``$\subseteq$''.  
Let $t\geq 0$ and $\lambda\in\sigma_{\bap}^{\seq}(T(t))$, $\lambda\neq 0$. 
If $t=0$, then $T(t)=\id$ and both sides of the inclusion ``$\subseteq$'' are equal to $\{1\}$ if $X\neq\{0\}$, or 
the empty set if $X=\{0\}$. Let $t>0$. Then there is a bounded 
sequence $(x_i)_{i\in\N}$ that does not converge to $0$ and fulfils $\lim_{i\to\infty} T(t)x_i -\lambda x_i=0$. 
Since $t>0$, there is $k\in\N$ such that $kt\geq t_{\ev}$. We claim that 
$\lim_{i\to\infty}T(kt)x_i -\lambda^{k} x_i=0$. If $k=1$, then this is clearly fulfilled. 
If $k>1$, then it follows from writing 
\[
 T(kt)x_i-\lambda^{k}x_i
=T(t)\bigl(T((k-1)t)x_i-\lambda^{k-1}x_i\bigr)+\lambda^{k-1}(T(t)x_i-\lambda x_i) 
\]
for all $i\in \N$. Since the sequence $(T(kt)x_i -\lambda^{k} x_i)_{i\in\N}$ converges in $X$, the set 
$\{T(kt)x_i -\lambda^{k} x_i\;|\;i\in\N\}$ is precompact in $X$. 
Due to \prettyref{prop:bounded_ap_uniform} with $I\coloneqq \N$, $t_0\coloneqq kt$ and $c\coloneqq \lambda^{k}$ 
we get that 
\[
\lim_{s\to 0\rlim}\sup_{i\in \N} q(T(s)x_i-x_i)=0
\]
for all $q\in\Gamma_X$. Applying \prettyref{cor:bounded_seq_ap}, we conclude our statement.  
\end{proof}

\prettyref{cor:spec_bounded_seq_ap} in combination with \prettyref{rem:alg_resolvent} \ref{it:cl_gr_1}, \prettyref{prop:seq_ap_bounded_ap}, \prettyref{prop:decomp_spec} \ref{it:decomp_spec_1} and \prettyref{thm:spec_res} 
implies the spectral mapping theorem 
\eqref{eq:full_spec_map} for eventually uniformly continuous semigroups on Banach spaces. Further, the aforementioned 
results together with \prettyref{prop:decomp_spec} \ref{it:decomp_spec_2} and \ref{it:decomp_spec_3}, 
\prettyref{thm:spec_incl} \ref{it:spec_incl_4} and \ref{it:spec_incl_4a} and 
\prettyref{cor:spec_bounded_ap} also yield the following observation.

\begin{rem}\label{rem:spectral_bound}
Let $X$ be a Hausdorff locally convex space and $(T(t))_{t\geq 0}$ a strongly continuous 
semigroup on $X$ with generator $(A,D(A))$.
\begin{enumerate}[label=\upshape(\alph*), leftmargin=*]
\item Let $X_{\ob}'$ be sequentially complete, $\rho_{\alg}(A)\neq\varnothing$ and 
$(T(t))_{t\geq 0}$ locally equicontinuous. 
If $X$ is one of the spaces listed in \prettyref{rem:alg_resolvent} and 
\begin{enumerate}[label=\upshape(\roman*), leftmargin=*, widest=iii]
\item \label{it:full_spec_1} $X$ is a complete generalised Schwartz space, or 
\item \label{it:full_spec_2} $X$ is a sequentially complete sequential generalised Schwartz space, or 
\item \label{it:full_spec_3} $X$ is sequentially complete, sequential and $(T(t))_{t\geq 0}$ eventually 
uniformly continuous, 
\end{enumerate}
and $\sigma_{\ap}(A)=\sigma_{\bap}(A)$ and $\sigma_{\ap}(T(t))=\sigma_{\bap}(T(t))$ 
for all $t\geq 0$ in case \ref{it:full_spec_1}, and 
$\sigma_{\ap}^{\seq}(A)=\sigma_{\bap}^{\seq}(A)$ and $\sigma_{\ap}^{\seq}(T(t))=\sigma_{\bap}^{\seq}(T(t))$ 
for all $t\geq 0$ in cases \ref{it:full_spec_2}--\ref{it:full_spec_3}, respectively, 
then \eqref{eq:full_spec_map} holds, i.e. 
\begin{equation}\label{eq:spectral_bound_1}
\sigma(T(t))\setminus\{0\}=\e^{t\sigma(A)},\quad t\geq 0.
\end{equation}
Unfortunately, we do not know e.g.~nice sufficient conditions when $\sigma_{\ap}(B)=\sigma_{\bap}(B)$ 
or $\sigma_{\ap}^{\seq}(B)=\sigma_{\bap}^{\seq}(B)$ holds for all closed linear operators $(B,D(B))$ on $X$ 
apart from the case that $X$ is Banach space. 
\item\label{it:spectral_bound_2} Let $(T(t))_{t\geq 0}$ be quasi-equicontinuous. Then 
\[
\omega_{0}(T)\coloneqq \inf\{\omega\in\R\;|\;(\mathrm{e}^{-\omega t}T(t))_{t\geq 0}\text{ is equicontinuous}\}
\]
is called the \emph{growth bound} of $(T(t))_{t\geq 0}$ by \cite[Definition 2.1, p.~802]{wegner2016}. 
If $\omega_0(T)<0$, we directly get from this definition the asymptotic behaviour of the mild solution of the abstract Cauchy problem from the introduction, namely, then there is $\omega<0$ such that $\lim_{t\to\infty}\mathrm{e}^{-\omega t}T(t)=0$ in $\cL_{\ob}(X)$ 
(cf.~\cite[Chap.~V, 1.7 Proposition, p.~299]{engel_nagel2000}). 
Since often only the generator is given and the semigroup is not explicitly known, we 
are interested in a sufficient criterion relying on the knowledge of $A$ alone that gives us $\omega_0(T)<0$. 
We call 
\[
s(A)\coloneqq\sup\{\re(\lambda)\;|\;\lambda\in\sigma(A)\}
\]
the \emph{spectral bound} of $A$ (see \cite[Chap.~II, 1.12 Definition, p.~57]{engel_nagel2000} in the case where $X$ 
is a Banach space). We note that this definition of the growth bound differs from the one given in 
\cite[Definition 3.1, p.~804--805]{wegner2016}. However, we still have 
\[
-\infty\leq s(A)\leq\omega_0(T)<\infty
\]
by the remark directly after the proof of \cite[Theorem 4.1, p.~806]{wegner2016}.\footnote{In \cite{wegner2016} it is assumed throughout the paper that $X$ is a Fr\'echet space (see \cite[p.~802]{wegner2016}) but this is not relevant for the proof of \cite[Theorem 4.1, p.~806]{wegner2016}.} 
Suppose that $s(A)=\omega_{0}(T)$. If we know that $s(A)<0$, then there is $\omega<0$ such that 
$\lim_{t\to\infty}\mathrm{e}^{-\omega t}T(t)=0$ in $\cL_{\ob}(X)$ by our consideration above. 
Hence, we are interested in the question when $s(A)=\omega_{0}(T)$ holds. 
In the case that $X$ is a Banach space it is known that $s(A)=\omega_{0}(T)$ is true by 
\cite[Chap.~V, 1.9 Lemma, p.~301]{engel_nagel2000} if the spectral mapping theorem \eqref{eq:spectral_bound_1} holds. 
So, the open question is whether in the locally convex case the spectral mapping theorem \eqref{eq:spectral_bound_1} 
(or some version of it) also implies $s(A)=\omega_{0}(T)$. It seems that this is in general not known even for 
uniformly continuous semigroups on Fr\'echet spaces (cf.~the comments below \cite[Example 5.1, p.~807--809]{wegner2016}).
\end{enumerate}

\end{rem}

\bibliography{biblio_Spectral_rev2}

\begin{thebibliography}{57}
\providecommand{\natexlab}[1]{#1}
\providecommand{\url}[1]{\texttt{#1}}
\expandafter\ifx\csname urlstyle\endcsname\relax
  \providecommand{\doi}[1]{doi: #1}\else
  \providecommand{\doi}{doi: \begingroup \urlstyle{rm}\Url}\fi

\bibitem[Albanese and Jornet(2016)]{albanese2016b}
A.A. Albanese and D.~Jornet.
\newblock Dissipative operators and additive perturbations in locally convex
  spaces.
\newblock \emph{Math. Nachr.}, 289\penalty0 (8-9):\penalty0 920--949, 2016.
\newblock \doi{10.1002/mana.201500150}.

\bibitem[Albanese et~al.(2010{\natexlab{a}})Albanese, Bonet, and
  Ricker]{albanese2010a}
A.A. Albanese, J.~Bonet, and W.J. Ricker.
\newblock {$C_0$-semigroups and mean ergodic operators in a class of Fr\'echet
  spaces}.
\newblock \emph{J. Math. Anal. Appl.}, 365\penalty0 (1):\penalty0 142--157,
  2010{\natexlab{a}}.
\newblock \doi{10.1016/j.jmaa.2009.10.014}.

\bibitem[Albanese et~al.(2010{\natexlab{b}})Albanese, Bonet, and
  Ricker]{albanese2010b}
A.A. Albanese, J.~Bonet, and W.J. Ricker.
\newblock {Grothendieck spaces with the Dunford--Pettis property}.
\newblock \emph{Positivity}, 14\penalty0 (1):\penalty0 145--164,
  2010{\natexlab{b}}.
\newblock \doi{10.1007/s11117-009-0011-x}.

\bibitem[Albanese et~al.(2012)Albanese, Bonet, and Ricker]{albanese2012}
A.A. Albanese, J.~Bonet, and W.J. Ricker.
\newblock Mean ergodic semigroups of operators.
\newblock \emph{RACSAM Rev. R. Acad. Cienc. Exactas F\'is. Nat. Ser. A Mat.},
  106\penalty0 (2):\penalty0 299--319, 2012.
\newblock \doi{10.1007/s13398-011-0054-2}.

\bibitem[Albanese et~al.(2013)Albanese, Bonet, and Ricker]{albanese2013}
A.A. Albanese, J.~Bonet, and W.J. Ricker.
\newblock Montel resolvents and uniformly mean ergodic semigroups of linear
  operators.
\newblock \emph{Quaest. Math.}, 36\penalty0 (2):\penalty0 253--290, 2013.
\newblock \doi{10.2989/16073606.2013.779978}.

\bibitem[Albanese et~al.(2014)Albanese, Bonet, and Ricker]{albanese2014}
A.A. Albanese, J.~Bonet, and W.J. Ricker.
\newblock {Uniform mean ergodicity of $C_0$-semigroups in a class of Fr\'echet
  spaces}.
\newblock \emph{Funct. Approx. Comment. Math.}, 50\penalty0 (2):\penalty0
  307--349, 2014.
\newblock \doi{10.7169/facm/2014.50.2.8}.

\bibitem[Albanese et~al.(2016{\natexlab{a}})Albanese, Bonet, and
  Ricker]{albanese2016a}
A.A. Albanese, J.~Bonet, and W.J. Ricker.
\newblock {Dynamics and spectrum of the Ces\`{a}ro operator on
  $C^{\infty}(\mathbb{R}_{+})$}.
\newblock \emph{Monatsh. Math.}, 181\penalty0 (2):\penalty0 267--283,
  2016{\natexlab{a}}.
\newblock \doi{10.1007/s00605-015-0863-z}.

\bibitem[Albanese et~al.(2016{\natexlab{b}})Albanese, Bonet, and
  Ricker]{albanese2016c}
A.A. Albanese, J.~Bonet, and W.J. Ricker.
\newblock {Erratum to: Dynamics and spectrum of the Ces\`{a}ro operator on
  $C^{\infty}(\mathbb{R}_{+})$}.
\newblock \emph{Monatsh. Math.}, 181\penalty0 (4):\penalty0 991--993,
  2016{\natexlab{b}}.
\newblock \doi{10.1007/s00605-016-0975-0}.

\bibitem[Anderson et~al.(2017)Anderson, Jovovic, and Smith]{anderson2017}
A.~Anderson, M.~Jovovic, and W.~Smith.
\newblock {Composition semigroups on \textrm{BMOA} and $H^{\infty}$}.
\newblock \emph{J. Math. Anal. Appl.}, 449\penalty0 (1):\penalty0 843--852,
  2017.
\newblock \doi{10.1016/j.jmaa.2016.12.032}.

\bibitem[Appell et~al.(2004)Appell, De~Pascale, and Vignoli]{appell2004}
J.~Appell, E.~De~Pascale, and A.~Vignoli.
\newblock \emph{Nonlinear spectral theory}.
\newblock de Gruyter Ser. Nonlinear Anal. Appl. 10. de Gruyter, Berlin, 2004.

\bibitem[Babalola(1974)]{babalola1974}
V.A. Babalola.
\newblock Semigroups of operators on locally convex spaces.
\newblock \emph{Trans. Amer. Math. Soc.}, 199:\penalty0 163--179, 1974.
\newblock \doi{10.1090/S0002-9947-1974-0383142-8}.

\bibitem[Boos and Leiger(1993)]{boos1993}
J.~Boos and T.~Leiger.
\newblock Some new classes in topological sequence spaces related to
  {$L_{r}$}-spaces and an inclusion theorem for {$K(X)$}-spaces.
\newblock \emph{Z. Anal. Anwend.}, 12\penalty0 (1):\penalty0 13--26, 1993.
\newblock \doi{10.4171/ZAA/582}.

\bibitem[Choe(1985)]{choe1985}
Y.H. Choe.
\newblock {$C_0$-semigroups on a locally convex space}.
\newblock \emph{J. Math. Anal. Appl.}, 106\penalty0 (2):\penalty0 293--320,
  1985.
\newblock \doi{10.1016/0022-247X(85)90115-5}.

\bibitem[Cooper(1978)]{cooper1978}
J.B. Cooper.
\newblock \emph{Saks spaces and applications to functional analysis}.
\newblock North-Holland Math. Stud. 28. North-Holland, Amsterdam, 1978.

\bibitem[Dembart(1974)]{dembart1974}
B.~Dembart.
\newblock On the theory of semigroups of operators on locally convex spaces.
\newblock \emph{J. Funct. Anal.}, 16\penalty0 (2):\penalty0 123--160, 1974.
\newblock \doi{10.1016/0022-1236(74)90061-5}.

\bibitem[Dierolf(1985)]{dierolf1985}
S.~Dierolf.
\newblock On spaces of continuous linear mappings between locally convex
  spaces.
\newblock \emph{Note Mat.}, 5\penalty0 (2):\penalty0 148--255, 1985.
\newblock \doi{10.1285/i15900932v5n2p148}.

\bibitem[Dineen(1981)]{dineen1981}
S.~Dineen.
\newblock \emph{Complex analysis in locally convex spaces}.
\newblock North-Holland Math. Stud. 57. North-Holland, Amsterdam, 1981.

\bibitem[Dunford and Schwartz(1958)]{dunford1958}
N.~Dunford and J.T. Schwartz.
\newblock \emph{{Linear operators, Part 1: General theory}}.
\newblock Pure Appl. Math. (N.Y.) 7. Wiley-Intersci., New York, 1958.

\bibitem[Engel and Nagel(2000)]{engel_nagel2000}
K.-J. Engel and R.~Nagel.
\newblock \emph{One-parameter semigroups for linear evolution equations}.
\newblock Grad. Texts in Math. 194. Springer, New York, 2000.
\newblock \doi{10.1007/b97696}.

\bibitem[Engelking(1989)]{engelking1989}
R.~Engelking.
\newblock \emph{General topology}.
\newblock Sigma Series Pure Math. 6. Heldermann, Berlin, 1989.

\bibitem[Farkas(2003)]{farkas2003}
B.~Farkas.
\newblock \emph{Perturbations of bi-continuous semigroups}.
\newblock PhD thesis, E\"otv\"os Lor\'and University, Budapest, 2003.

\bibitem[Federico and Rosestolato(2020)]{federico2020}
S.~Federico and M.~Rosestolato.
\newblock {$C_0$-sequentially equicontinuous semigroups}.
\newblock \emph{Kyoto J. Math.}, 60:\penalty0 1131--1175, 2020.
\newblock \doi{10.1215/21562261-2019-0010}.

\bibitem[Forster(2016)]{forster2016}
O.~Forster.
\newblock \emph{Analysis 1}.
\newblock Grundkurs Mathematik. Springer, Wiesbaden, 12th edition, 2016.
\newblock \doi{10.1007/978-3-658-11545-6}.

\bibitem[Frerick et~al.(2014)Frerick, Jord\'a, Kalmes, and
  Wengenroth]{frerick2014}
L.~Frerick, E.~Jord\'a, T.~Kalmes, and J.~Wengenroth.
\newblock {Strongly continuous semigroups on some Fr\'echet spaces}.
\newblock \emph{J. Math. Anal. Appl.}, 412\penalty0 (1):\penalty0 121--124,
  2014.
\newblock \doi{10.1016/j.jmaa.2013.10.053}.

\bibitem[Hille and Phillips(1957)]{hillephillips1957}
E.~Hille and R.S. Phillips.
\newblock \emph{Functional analysis and semi-groups}.
\newblock Amer. Math. Soc. Colloq. Publ. AMS, Providence, R.I., 1957.

\bibitem[Jacob et~al.(2015)Jacob, Wegner, and Wintermayr]{jacob2015}
B.~Jacob, S.-A. Wegner, and J.~Wintermayr.
\newblock {Desch--Schappacher perturbation of one-parameter semigroups on
  locally convex spaces}.
\newblock \emph{Math. Nachr.}, 288\penalty0 (8--9):\penalty0 925--935, 2015.
\newblock \doi{10.1002/mana.201400116}.

\bibitem[Jarchow(1981)]{jarchow1981}
H.~Jarchow.
\newblock \emph{Locally convex spaces}.
\newblock Math. Leitf\"{a}den. Teubner, Stuttgart, 1981.
\newblock \doi{10.1007/978-3-322-90559-8}.

\bibitem[Jord\'a(2005)]{jorda2005}
E.~Jord\'a.
\newblock Topologies on spaces of vector-valued meromorphic functions.
\newblock \emph{J. Aust. Math. Soc.}, 78\penalty0 (2):\penalty0 273--290, 2005.
\newblock \doi{10.1017/S1446788700008089}.

\bibitem[Kaballo(2014)]{kaballo2014}
W.~Kaballo.
\newblock \emph{Aufbaukurs Funktionalanalysis und Operatortheorie}.
\newblock Springer, Berlin, 2014.
\newblock \doi{10.1007/978-3-642-37794-5}.

\bibitem[K\k{a}kol and Saxon(2002)]{kakolsaxon2002}
J.~K\k{a}kol and S.A. Saxon.
\newblock {Montel (DF)-spaces, sequential (LM)-spaces and the strongest locally
  convex topology}.
\newblock \emph{J. London Math. Soc.}, 66\penalty0 (2):\penalty0 388--406,
  2002.
\newblock \doi{10.1112/S0024610702003459}.

\bibitem[Komatsu(1964)]{komatsu1964}
H.~Komatsu.
\newblock Semi-groups of operators in locally convex spaces.
\newblock \emph{J. Math. Soc. Japan}, 16\penalty0 (3):\penalty0 230--262, 1964.
\newblock \doi{10.2969/jmsj/01630230}.

\bibitem[K{\=o}mura(1968)]{komura1968}
T.~K{\=o}mura.
\newblock Semigroups of operators in locally convex spaces.
\newblock \emph{J. Funct. Anal.}, 2\penalty0 (3):\penalty0 258--296, 1968.
\newblock \doi{10.1016/0022-1236(68)90008-6}.

\bibitem[K\"{o}the(1979)]{koethe1979}
G.~K\"{o}the.
\newblock \emph{{Topological vector spaces II}}.
\newblock Grundlehren Math. Wiss. 237. Springer, Berlin, 1979.
\newblock \doi{10.1007/978-1-4684-9409-9}.

\bibitem[Kraaij(2016)]{kraaij2016}
R.~Kraaij.
\newblock Strongly continuous and locally equi-continuous semigroups on locally
  convex spaces.
\newblock \emph{Semigroup Forum}, 92\penalty0 (1):\penalty0 158--185, 2016.
\newblock \doi{10.1007/s00233-015-9689-1}.

\bibitem[Kruse(2023)]{kruse2023}
K.~Kruse.
\newblock \emph{On vector-valued functions and the $\varepsilon$-product}.
\newblock Habilitation thesis. Hamburg University of Technology, 2023.
\newblock \doi{10.15480/882.4898}.

\bibitem[Kruse(2024{\natexlab{a}})]{kruse2024a}
K.~Kruse.
\newblock Mixed topologies on {S}aks spaces of vector-valued functions.
\newblock \emph{Topology Appl.}, 345:\penalty0 1--27, 2024{\natexlab{a}}.
\newblock \doi{10.1016/j.topol.2024.108843}.

\bibitem[Kruse(2024{\natexlab{b}})]{kruse2024b}
K.~Kruse.
\newblock Weighted composition semigroups on spaces of continuous functions and
  their subspaces.
\newblock \emph{Complex Anal. Oper. Theory}, 18\penalty0 (5):\penalty0 1--47,
  2024{\natexlab{b}}.
\newblock \doi{10.1007/s11785-024-01559-5}.

\bibitem[Kruse and Schwenninger(2022)]{kruse_schwenninger2022}
K.~Kruse and F.L. Schwenninger.
\newblock On equicontinuity and tightness of bi-continuous semigroups.
\newblock \emph{J. Math. Anal. Appl.}, 509\penalty0 (2):\penalty0 1--27, 2022.
\newblock \doi{10.1016/j.jmaa.2021.125985}.

\bibitem[Kruse and Schwenninger(2025)]{kruse_schwenninger2025}
K.~Kruse and F.L. Schwenninger.
\newblock Continuous maximal regularity in locally convex spaces.
\newblock \emph{Studia Math.}, 285\penalty0 (1):\penalty0 41--89, 2025.
\newblock \doi{10.4064/sm240822-31-7}.

\bibitem[Mart\'{i}n-Peinador and Tarieladze(2015)]{martinpeinador2015}
E.~Mart\'{i}n-Peinador and V.~Tarieladze.
\newblock On dually $c$-{M}ackey spaces.
\newblock \emph{Proc. A. Razmadze Math. Inst.}, 168:\penalty0 79--86, 2015.

\bibitem[McIntosh(1969)]{mcintosh1969}
A.~McIntosh.
\newblock On the closed graph theorem.
\newblock \emph{Proc. Amer. Math. Soc.}, 20\penalty0 (2):\penalty0 397--404,
  1969.
\newblock \doi{10.1090/S0002-9939-1969-0234246-5}.

\bibitem[Meise and Vogt(1997)]{meisevogt1997}
R.~Meise and D.~Vogt.
\newblock \emph{Introduction to functional analysis}.
\newblock Oxf. Grad. Texts Math. 2. Clarendon Press, Oxford, 1997.

\bibitem[Miyadera(1959)]{miyadera1959}
I.~Miyadera.
\newblock {Semi-groups of operators in Fr\'echet space and applications to
  partial differential equations}.
\newblock \emph{Tohoku Math. J. (2)}, 11\penalty0 (2):\penalty0 162--183, 1959.
\newblock \doi{10.2748/tmj/1178244580}.

\bibitem[Moore(1971{\natexlab{a}})]{moore1971a}
R.T. Moore.
\newblock {Generation of equicontinuous semigroups by Hermitian and sectorial
  operators. I}.
\newblock \emph{Bull. Amer. Math. Soc.}, 77\penalty0 (2):\penalty0 224--229,
  1971{\natexlab{a}}.
\newblock \doi{10.1090/S0002-9904-1971-12693-9}.

\bibitem[Moore(1971{\natexlab{b}})]{moore1971b}
R.T. Moore.
\newblock {Generation of equicontinuous semigroups by Hermitian and sectorial
  operators. II}.
\newblock \emph{Bull. Amer. Math. Soc.}, 77\penalty0 (3):\penalty0 368--373,
  1971{\natexlab{b}}.
\newblock \doi{10.1090/S0002-9904-1971-12699-X}.

\bibitem[Munkres(2000)]{munkres2000}
J.R. Munkres.
\newblock \emph{Topology}.
\newblock Prentice Hall, Upper Saddle River, NY, 2nd edition, 2000.

\bibitem[{\=O}uchi(1973)]{ouchi1973}
S.~{\=O}uchi.
\newblock {Semi-groups of operators in locally convex spaces}.
\newblock \emph{J. Math. Soc. Japan}, 25\penalty0 (2):\penalty0 265--276, 1973.
\newblock \doi{10.2969/jmsj/02520265}.

\bibitem[Pazy(1983)]{pazy1983}
A.~Pazy.
\newblock \emph{Semigroups of linear operators and applications to partial
  differential equations}.
\newblock Appl. Math. Sci. 44. Springer, New York, 1983.
\newblock \doi{10.1007/978-1-4612-5561-1}.

\bibitem[P\'{e}rez~Carreras and Bonet(1987)]{bonet1987}
P.~P\'{e}rez~Carreras and J.~Bonet.
\newblock \emph{Barrelled locally convex spaces}.
\newblock Math. Stud. 131. North-Holland, Amsterdam, 1987.

\bibitem[Qiu(1985)]{qiu1985}
J.~Qiu.
\newblock A new class of locally convex spaces and the generalization of
  {K}alton's closed graph theorem.
\newblock \emph{Acta Math. Sci. Ser. B Engl. Ed.}, 5\penalty0 (4):\penalty0
  389--397, 1985.
\newblock \doi{10.1016/S0252-9602(18)30540-X}.

\bibitem[Schaefer(1971)]{schaefer1971}
H.H. Schaefer.
\newblock \emph{Topological vector spaces}.
\newblock Grad. Texts in Math. Springer, Berlin, 3rd edition, 1971.
\newblock \doi{10.1007/978-1-4684-9928-5}.

\bibitem[van Neerven(1992)]{vanneerven1992}
J.~van Neerven.
\newblock \emph{The adjoint of a semigroup of linear operators}.
\newblock Lecture Notes in Math. 1529. Springer, Berlin, 1992.
\newblock \doi{10.1007/BFb0085008}.

\bibitem[van Neerven(1996)]{vanneerven1996}
J.~van Neerven.
\newblock \emph{The asymptotic behaviour of semigroups of linear operators}.
\newblock Oper. Theory Adv. Appl. 88. Birkh\"auser, Basel, 1st edition, 1996.
\newblock \doi{10.1007/978-3-0348-9206-3}.

\bibitem[Wegner(2014)]{wegner2014}
S.-A. Wegner.
\newblock Universal extrapolation spaces for {$\mathrm{C}_{0}$}-semigroups.
\newblock \emph{Ann. Univ. Ferrara}, 60\penalty0 (2):\penalty0 447--463, 2014.
\newblock \doi{10.1007/s11565-013-0189-5}.

\bibitem[Wegner(2016)]{wegner2016}
S.-A. Wegner.
\newblock The growth bound for strongly continuous semigroups on {F}r\'echet
  spaces.
\newblock \emph{Proc. Edinb. Math. Soc. (2)}, 59\penalty0 (3):\penalty0
  801--810, 2016.
\newblock \doi{10.1017/S0013091515000310}.

\bibitem[Wiweger(1961)]{wiweger1961}
A.~Wiweger.
\newblock Linear spaces with mixed topology.
\newblock \emph{Studia Math.}, 20\penalty0 (1):\penalty0 47--68, 1961.
\newblock \doi{10.4064/sm-20-1-47-68}.

\bibitem[Yosida(1968)]{yosida1968}
K.~Yosida.
\newblock \emph{Functional analysis}.
\newblock Grundlehren Math. Wiss. 123. Springer, Berlin, 2nd edition, 1968.
\newblock \doi{10.1007/978-3-642-96439-8}.

\end{thebibliography}
\bibliographystyle{plainnat}
\end{document}